\newcommand{\salg}{\mathcal{S}}
\newcommand{\rcfield}{\vfield}
\newcommand{\vfield}{\mathscr{K}}
\newcommand{\coloneqq}{:=}
\newcommand{\doag}{\R}
\newcommand{\vring}{\mathscr{O}}
\newcommand{\xsec}{\operatorname{cs}}
\newcommand{\slin}{S}
\numberwithin{equation}{section}
\numberwithin{equation}{section}
\definecolor{Mygrey}{gray}{0.75}
\def\displayandname#1{\rlap{$\displaystyle\csname #1\endcsname$}%
                      \qquad \texttt{\char92 #1}}
\newtheorem*{lemma*}{Lemma}
\def\smallunderbrace#1{\mathop{\vtop{\m@th\ialign{##\crcr
   $\hfil\displaystyle{#1}\hfil$\crcr
   \noalign{\kern3\p@\nointerlineskip}%
   \tiny\upbracefill\crcr\noalign{\kern3\p@}}}}\limits}
\newcommand{\puiseuxalpha}{{\bm \alpha}}
\newcommand{\puiseuxlambda}{{\bm \lambda}}
\newcommand{\puiseuxrho}{{\bm \rho}}
\newcommand{\puiseuxA}{{\bm A}}
\newcommand{\puiseuxI}{{\bm I}}
\newcommand{\puiseuxJ}{{\bm J}}
\newcommand{\puiseuxB}{{\bm B}}
\newcommand{\puiseuxF}{{\bm F}}
\newcommand{\puiseuxD}{{\bm D}}
\newcommand{\puiseuxM}{{\bm M}}
\newcommand{\puiseuxP}{{\bm P}}
\newcommand{\puiseuxf}{{\bm f}}
\newcommand{\puiseuxg}{{\bm g}}
\newcommand{\plucker}{\Delta}
\newcommand{\stiefel}{\phi}
\definecolor{Mygrey}{gray}{0.75}
\def\displayandname#1{\rlap{$\displaystyle\csname #1\endcsname$}%
                      \qquad \texttt{\char92 #1}}
\def\url@leostyle{%
  \@ifundefined{selectfont}{\def\UrlFont{\sf}}{\def\UrlFont{\small\ttfamily}}}
\DeclareMathAlphabet{\mathbbold}{U}{bbold}{m}{n}
\newcommand{\zero}{\mathbbold{0}}
\newcommand{\unit}{\mathbbold{1}}
\newcommand{\minusinfty}{-\infty}
\newcommand{\per}{\operatorname{per}}
\newcommand{\Id}{\operatorname{Id}}
\newcommand\sign{{\operatorname{sgn}}}
\newcommand\Gr{{\operatorname{Gr}}}
\newcommand{\rmax}{\mathbb{R}_{\max}}
\newcommand{\R}{\mathbb{R}}
\newcommand{\K}{\mathbb{K}}
\newcommand{\Kpos}{\K_{>0}}
\newcommand{\Knneg}{\K_{\geq 0}}
\newcommand{\pat}{\operatorname{pat}}
\newcommand\val{{\operatorname{val}}}
\newcommand\TP{{\mathsf{TP}}}  %
\newcommand\TN{{\mathsf{TN}}}
\newcommand\trop{{\operatorname{trop}}}
\newcommand\Trop{{\operatorname{Trop}}}
\newcommand\DD{{\mathsf{DD}^{\trop}}}
\newcommand\NDD{{\mathsf{SDD}^{\trop}}}
\newcommand\GL{{\operatorname{GL}}}
\newcommand\diag{{\operatorname{diag}}}
\newenvironment{customthm}[1]
  {\innercustomthm}
  {\endinnercustomthm}
\newcommand{\mgroup}[1]{#1^*}
\newtheorem{thm}{Theorem}[section]
\newtheorem{theorem}[thm]{Theorem}
\newtheorem{pro}[thm]{Proposition}
\newtheorem{lem}[thm]{Lemma}
\newtheorem{cor}[thm]{Corollary}
\newtheorem{corollary}[thm]{Corollary}
\theoremstyle{definition}
\newtheorem{df}[thm]{Definition}
\newtheorem{definition}[thm]{Definition}
\theoremstyle{remark}
\newtheorem{rem}[thm]{Remark}
\newtheorem{exa}[thm]{Example}
\newtheorem{example}[thm]{Example}
\title{Tropical totally positive matrices}
\author{{S}t\'ephane Gaubert}
\address{St\'ephane Gaubert,
INRIA Saclay--\^Ile-de-France and CMAP, \'Ecole 
polytechnique, CNRS. Address: CMAP, \'Ecole polytechnique,
Route de Saclay,
91128 Palaiseau Cedex, France.}
\email{Stephane.Gaubert@inria.fr}
\author{{A}di Niv}
\address{Adi Niv,
Mathematics Department, Science Faculty, Kibbutzim College.
Address: Kibbutzim College, 149 Namir Rd., Tel-Aviv, Israel.}
\email{adi.niv@smkb.ac.il}
\thanks{The first author has been partially supported by the 
Gaspard Monge Program (PGMO) of FMJH and EDF, by a public grant as part of the
Investissement d'avenir project, reference ANR-11-LABX-0056-LMH,
LabEx LMH, and by the MALTHY Project of the ANR Program. The second author was sported by the 
French Chateaubriand grant and INRIA postdoctoral fellowship.} 
\thanks{We thank Gleb Koshevoy for suggesting to investigate the present topic and for discussions. We also   
thank Charles Johnson for pointing out the interpretation of our results in terms of Monge matrices, and Benjamin Schr\"oter for helpful comments.}
\begin{document}

\begin{abstract}
We investigate the tropical analogues of totally positive and totally nonnegative matrices. 
These arise when considering the images by the nonarchimedean valuation of the 
corresponding classes of matrices over a real nonarchimedean valued field, like the 
field of real Puiseux series. We show that the nonarchimedean
valuation sends the totally positive matrices precisely to the Monge matrices. 
This leads to explicit polyhedral representations of the tropical
analogues of totally positive and totally nonnegative matrices. 
We also show that tropical totally nonnegative matrices with a finite permanent
can be factorized  in terms of elementary
matrices. We finally determine the eigenvalues of tropical
totally nonnegative matrices, and relate them with the eigenvalues
of totally nonnegative matrices over nonarchimedean fields.

\vskip 0.15 truecm

\noindent \textit{Keywords: Total positivity; total nonnegativity; tropical geometry;   compound matrix;  permanent; Monge matrices; Grassmannian; Pl\"ucker coordinates.}
\vskip 0.1 truecm

\noindent \textit{AMSC: 15A15 (Primary), 15A09, 15A18, 15A24, 15A29, 15A75, 15A80, 
15B99.} 	
\end{abstract}

\maketitle



\section{Introduction}
\subsection{Motivation and background}
A real matrix is said to be {\em totally positive} (resp.~{\em totally nonnegative}) 
if all its minors are positive (resp.~nonnegative). These matrices
arise in several classical fields, such as oscillatory matrices (see e.g.~\cite[\S4]{TPM}), 
or approximation theory (see e.g.~\cite{kluwertotalpositivity});
they have appeared more recently 
in the theory of canonical bases for quantum groups~\cite{berensteinparametrization}.
We refer the reader to the monograph of Fallat and Johnson in~\cite{Fallat&Johnson} or to the survey of Fomin and Zelevinsky~\cite{F&Z} for more information.
Totally positive/nonnegative matrices can be defined over any real
closed field, and in particular, over nonarchimedean fields, like
the field of Puiseux series with real coefficients. 
In this paper, we characterize
the set of images of such matrices by the nonarchimedean valuation
which associates to a Puiseux series its leading exponent.
To do so, we study the tropical analogues of totally positive
and totally nonnegative matrices.

To describe further our results, it is convenient to recall some basic notions. 
The max-plus 
(or tropical) semifield, denoted by~$\rmax$,
is the set~$\mathbb{R}\cup\{-\infty\}$ equipped with the laws~$a\oplus b:= \max(a,b)$ 
and~$a\odot b:= a+b$. 
(See for instance~\cite{BCOQ92,TAG,MPA,butkovicbook,MacStur}.) It has a zero element,~$\zero=-\infty$, and a unit 
element,~$\unit=0$. We abuse notation by using the same symbol, $\rmax$
for the semifield and for its ground set. 
The tropical numbers can be thought of as the images
by the valuation of the elements of a nonarchimedean field.
A convenient choice of field, denoted by $\K$, consists
of 
(generalized) Puiseux series with real coefficients and real exponents.
Such a series can be written as 
\begin{align}
\puiseuxf:= \sum_{k\geq 0} a_k t^{b_k}\enspace , 
\label{e-puiseux}
\end{align}
where~$a_k\in \R$, $b_k\in \R$, and~$(b_k)$ is a decreasing sequence
converging to~$-\infty$. 
The {valuation} of~$\puiseuxf$ is defined to be the largest exponent of~$\puiseuxf$, 
i.e., ~$\val (\puiseuxf):= \sup \{b_k \mid a_k\ne 0\}$,
with the convention that~$\val (0)=\minusinfty$. 

The choice of this specific field of formal series is only
to keep the exposition concrete.
What matters is that the nonarchimedean field is real
closed, that its value group is $\R$, and that its valuation
has a certain property called convexity; we refer the reader
to \Cref{subsec-nonarch} for more details on the setting in which
our results hold. We emphasize in particular that our main results also
apply to fields of absolutely convergent series
with real exponents like the ones of~\cite{Dries1998} and more generally to Hardy fields of polynomially bounded o-minimal structures~\cite{alessandrini2013}.

A nonzero series is said to be {\em positive} if its leading coefficient  is positive. We denote 
by~$\Kpos$
the set of positive series, and we denote by~$\Knneg:= \Kpos\cup \{0\}$ the set of 
{\em nonnegative} series. The map~$\val$ satisfies, for all $\puiseuxf,\puiseuxg\in \Knneg$, 
\[
\val(\puiseuxf+\puiseuxg) = \max(\val(\puiseuxf),\val(\puiseuxg))\enspace ,\qquad \val(\puiseuxf\puiseuxg) = \val(\puiseuxf)+\val(\puiseuxg)
\enspace .
\] 
In tropical algebra, we are interested in properties of objects defined over~$\mathbb{K}$ 
that can be inferred from this valuation.
We  study here the images by the valuation of the 
classical classes of totally positive or totally nonnegative matrices
over~$\K$. To do so, we associate to a~$d\times d$
matrix~$\puiseuxA=(\puiseuxA_{i,j})$ with entries
in~$\Knneg$, the matrix~$A$ with entries~$A_{ij}:= \val (\puiseuxA_{i,j})$ in~$\rmax$. 
We say that~$\puiseuxA_{i,j}$ is a \textit{lift} of~$A_{i,j}$, and ~$\puiseuxA$ is a \textit{lift} of~$A$, 
with the convention that~$0$ is the lift of~$\minusinfty$.
The {\em tropical permanent} of~$A$ is defined as 
\begin{align}
\per (A):= 
\max_{\sigma \in S_d} \sum_{i\in[d]} A_{i,\sigma(i)} \enspace ,
\label{e-def-tper}
\end{align}
where~$S_d$ is the set of permutations on~$[d]:=\{1,\dots,d\}$, and~$\sum_{i\in[d]} 
A_{i,\sigma(i)}$ is the \textit{weight} of the permutation~$\sigma$ in~$\per(A)$.

We say that the matrix~$A$ is (tropically) {\em sign-nonsingular}
if~$\per (A) \neq \minusinfty$ and if all the permutations~$\sigma$ of maximum weight 
have the same parity. Otherwise,~$A$ is said to be (tropically) {\em sign-singular}. 
We refer the reader to~\cite{shader} for more background
on the classical notion of sign-nonsingularity, and to~\cite{NSM,benchimol2013} for its tropical version. When~$A$ is  sign-nonsingular,
it is easily seen that
\[
\val (\det(\puiseuxA)) = \per(A)  \enspace ,
\] 
and the sign of~$\det(\puiseuxA)$ coincide with the sign of every permutation of 
maximal weight in~$\per(A)$.
A \textit{tropical minor}   is defined as the tropical permanent of a square 
submatrix. A tropical minor 
is said to be  \textit{tropically positive} (resp.~\textit{tropically negative}) if all its 
permutations of maximum weight   are even (resp.~odd). It is said to be 
\textit{tropically nonnegative} (resp.~\textit{tropically nonpositive}) 
if either the above condition
holds or the submatrix is  sign-singular. This terminology can be justified
by embedding the max-plus semiring in the symmetrized max-plus semiring~\cite{LS,AGG14}.

 A tropical matrix is said
to be {\em tropical totally positive} if  its entries are in~$\mathbb{R}$, and    
all its  minors are  tropically positive. It is
said to be {\em tropical totally nonnegative}  if  its entries are in~$\rmax$, and    
all its  minors are    tropically nonnegative.


\subsection*{Summary of notation} 
It is convenient now to list the main notations used in the manuscript.
We follow the notation used by Fallat and Johnson~\cite{Fallat&Johnson}
for various classes of totally nonnegative matrices. 

We denote by~$\TP$ (resp.~$\TN$) the set of totally positive (resp.~totally 
nonnegative) matrices over a field. 
We write~$\TP(\mathbb{K})$ or $\TP(\mathbb{R})$, indicating
the ground field in parenthesis, when necessary,
and we use a similar notation for $\TN$. 
The set~${\TP_t}$ 
(resp.~${\TN_t}$) denotes matrices whose  minors of size at most~$t$ 
are   positive (resp.~nonnegative).
Similarly, we shall denote by~$\TP^{\trop}$ (resp.~$\TN^{\trop}$) the set of tropical totally positive (resp.~tropical totally nonnegative) matrices,
which have entries in~$\rmax$. 
In general the entries of a matrix
in $\TN^\trop$ may take the $-\infty$ value.
We denote  by~$\TN^\trop(\R)$  the subset of matrices in $\TN^\trop$ 
whose entries belong to $\R$.
We also denote by~${\TP^{\trop}_t}$ (resp.~${\TN^{\trop}_t}$) 
the set of matrices with entries in $\rmax$ whose every 
tropical minor of size at most~$t$ is tropically 
positive (resp.\ tropically nonnegative).
Moreover, ${\TN^{\trop}_t}(\R)$ denotes the subset
of $\TN^{\trop}_t$ consisting of matrices with entries
in $\R$.

We also denote by~$\DD$ the set of matrices such that every 
submatrix is tropical diagonally dominant. In this setting,
a matrix $A$ is said to be tropical diagonally 
dominant if $\per(A)=\sum_{i\in[n]} A_{i,i}$.
If $\per(A) >\sum_{i\in [n]}A_{i,\sigma(i)}$ for all permutations
$\sigma$ distinct from the identity,
the matrix $A$ is said to be tropical {\em strictly} diagonally
dominant; the set of such matrices is denoted by~$\NDD$.

We draw the reader's attention to the notation $\TP$,
used in~\cite{Fallat&Johnson} to denote totally positive matrices.
This should not be confused with the notation $\mathbb{T}\mathbb{P}^{n-1}$, 
used in~\cite{DS} to denote the stratum of the $(n-1)$-dimensional \textit{tropical projective space} that consists of rays generated by finite vectors;
here we use the notation $\mathbb{P}^{n-1}(\rmax)$ for the tropical projective
space.

\subsection{Main results}
Our main results relate the classical and tropical notions of total nonnegativity.  The following theorem follows by combining \Cref{trop2}, \Cref{coro-fin} and \Cref{diagcor} below, 
it shows that the image by the valuation of the set of tropical total positive
matrices is determined by the tropical positivity of $2\times 2$ minors.
\begin{customthm}{A}\label{thA}
We have
\[
\TN_2^\trop(\R)
 =\TN^\trop (\R)
= \val(\TN(\K^*)) =\val(\TP) \enspace.
\]
\end{customthm}
We shall see
that~$\TN^\trop_2(\R)$  is precisely the 
set of \textit{Monge matrices}, named after Gaspard Monge,
as they arise in optimal  
transportation problems.
The set of Monge matrices has an explicit polyhedral parametrization
which follows from results of~\cite{BKR} and~\cite{FIEDLER},
see~\Cref{prop-new}. 

Another main result provides a tropical analogue
of the Loewner--Whitney theorem~\cite{Whitney,Loewner}
and~\cite[Theorem~12]{F&Z}.
The classical theorem shows that any invertible totally nonnegative
matrix is a product of nonnegative elementary Jacobi matrices.
\begin{customthm}{B}
We have
\[
\val(\GL_n\cap\TN)=
\langle\text{tropical~Jacobi~elementary~matrices}\rangle\enspace .
\]
\end{customthm}
This is part of \Cref{rltn} below. 
Here, $\GL_n$ denotes the set of invertible~$n\times n$ matrices over~$\K$,
and $\langle\cdot\rangle$ denotes the multiplicative semigroup generated
by a set of matrices (i.e., the set of finite products of matrices in this set).
Tropical Jacobi elementary matrices are defined in a way analogous to the
classical situation, see~\Cref{factlift}.
The semigroup they generate was studied in~\cite{FTM}: whereas classically,
every nonsingular matrix can be factored in terms of 
elementary matrices, the same is not true in the tropical
setting. Nevertheless, it was shown there
that the set of $3\times 3$ tropical matrices which
admits such a factorization admits a combinatorial characterization.
The present result shows that this characterization
can be interpreted in terms of total nonnegativity,
and provides some generalization to the $n\times n$ case.

A complete comparison of the classes of matrices studied
in the present paper can be found in \Cref{thC} and
\Cref{Table1,Table2} below.

The paper comprises several other results. 
In particular, \Cref{TNc}, which holds for matrices
over Puiseux series, and in particular, for matrices
over the field of real numbers, is a linear algebra
result which may be of independent interest, it shows that 
if the $2\times 2$ minors of a matrix with positive entries
are positive and ``sufficiently away'' from $0$, then, this
matrix is totally positive. 
Propositions~\ref{cc&ee} and~\ref{eede} characterize the valuations
of the eigenvalues of a totally positive matrix with entries in $\K$, showing
these are nothing but the valuations of the diagonal entries of the matrix.
Corollary~\ref{PNTP} provides a tropical analogue of the representation
of totally nonnegative matrices as weight matrices of planar networks.

\subsection{Related results and other approaches}
\label{subsec-related}
As mentioned above, a first source of inspiration of the present
work is the classical theory of totally positive matrices;
Fallat and Johnson~\cite{Fallat&Johnson}
and Fomin and Zelevinsky~\cite{F&Z} gave recent accounts of this theory.
We exploited a characterization
of Monge matrices, obtained by Burkard, Klinz, Rudolf and
Fiedler~\cite{BKR,FIEDLER}. 

The notion of ``positivity'' has other incarnations in linear
algebra. In particular, the tropical analogues of {\em positive definite}
matrices have been studied by Yu, who showed in~\cite{YU} that the image
by the valuation
of the set of symmetric positive definite matrices 
over the field of Puiseux series
is characterized by the nonnegativity of its
principal~$2\times 2$ minors. More generally, the tropicalization
of ``generic'' spectrahedra involves $2\times 2$ minors~\cite{issac2016}.
We show
that a somehow analogous property,~$\val(\TP)=\TN_2^\trop(\R)$, is valid for
totally positive matrices.

Another approach to total positivity
arises by considering,
following Postnikov~\cite{POST},
the totally nonnegative (or positive) Grassmannian.
The latter  
consists of the elements of the Grassmannian that have nonnegative 
(or positive) Pl\"ucker coordinates. 
A survey on totally nonnegative Grassmannian and a new approach to  
Grassmann polytopes via canonical bases was given by Lam in~\cite{LAM}. 
Speyer and Williams studied in~\cite{SW05}
the ``tropical totally positive Grassmannian''.
The latter arises
as the image by the valuation of the totally
positive Grassmannian over the nonarchimedean field of Puiseux series
with real coefficients. Whereas the classical Grassmanian can be realized
as the image of the map which sends a full rank matrix to its (projective)
Pl\"ucker coordinates, the same approach, transposed to the tropical setting, 
only yields an inner approximation of the tropical Grassmannian,
as shown by Herrmann, Joswig and Speyer~\cite{spst} and
Fink and Rinc\'on~\cite{rinconfink}.
Similarly, the totally positive Grassmannian can be realized
as an image of the set of totally positive matrices. We shall see in \Cref{subsec-other} that when transposed to the tropical setting, 
this approach leads only to a linear parametrization of a subset
of the tropical totally positive Grassmannian
by tropical totally nonnegative matrices. 



\section{Preliminaries: nonarchimedean amoebas of semialgebraic sets}
\label{subsec-nonarch}

It is convenient to summarize here the main properties
of valued fields which will be used. We refer the reader to~\cite{engler_prestel_valued_fields}
for background on valued fields. 

We consider a field $\vfield$ equipped with a total order relation $\geq$.
We denote by $\vfield_{\geq 0}:=\{f\in\vfield\mid f\geq 0\}$ the set of
nonnegative elements of $\vfield$ and by 
$\vfield_{> 0}:=\{f\in\vfield\mid f> 0\}$ the set of its positive elements.
We also assume
that $\vfield$ is equipped with a non-archimedean valuation, i.e.,
a map $\val: \vfield\to \R\cup\{-\infty\}$ satisfying 
the following properties
\begin{align}\label{e-init}
\val (f+g) \leq \max(\val (f), \val (g)), \quad
\val (fg) = \val (f)  + \val (g) , \quad 
\val (f) = -\infty \iff f =0 \enspace. 
\end{align}
The image of $\vfield^*:=\vfield\setminus\{0\}$ by the non-archimedean valuation
is a subgroup of $(\R,+)$, called the {\em value group}.

The field $\vfield$ possesses a subring, $\vring \coloneqq \{f \in \vfield \colon \val(f) \leq 0 \}$. 
We say that the valuation $\val$ is \emph{convex}
if it satisfies the following property: for every $f \in \vring$ and every $g
 \in \vfield$ we have the implication
\begin{align}\label{e-def-convex}
0 \leq g \leq f \implies g \in \vring \, .
\end{align}
This is equivalent to the following property: 
for all $f,g\in\vfield$, 
\begin{align}
f,g\geq 0 \implies \val (f+g) = \max(\val (f), \val (g))
\enspace .
\label{e-morphism}
\end{align}




An ordered field is said to be {\em real closed} if the set
of nonnegative elements is precisely the set of squares,
and if every polynomial of odd degree with coefficients
in this field has at least one root in the same field.
A theorem of Tarski shows that the first order theory of real closed fields is complete, see~\cite{marker}, Coro.~3.3.16.
This entails that a property expressed by a first order
sentence in the language of ordered fields which is valid in a special real closed
field, like $\R$, is valid in any real closed field.  We shall make use of this property in the sequel.

If $\rcfield$ is a real closed field, then we say that a subset $\mathcal{S} \subset \rcfield^{n}$ is \emph{basic semialgebraic} if it is of the form
\[
\mathcal{S} = \{(f_1,\dots,f_n) \in \rcfield^{n} \colon \forall i\in[p],\; P_{i}(f_1,\dots,f_n) > 0  \land \forall i \in [q]\setminus[p]
, P_{i}(f_1,\dots,f_n) = 0 \} \, ,
\]
where $P_{1},\dots,P_{q}$ are multivariate polynomials with coefficients in $\rcfield$, and $1\leq q\leq p$.
We say that $\salg$ is \emph{semialgebraic} if it is a finite union of basic semialgebraic sets. 

Gelfand, Kapranov, and Zelevinsky introduced in~\cite{GelfandKapranovZelevinsky} the notion of {\em amoeba}
of an algebraic variety $V$ included in $(\mathbb{C}^*)^n$, as the image of this
variety by the map $(z_i) \mapsto (\log |z_i|)$. Amoebas have
also been considered in the nonarchimedean setting; $\mathbb{C}$ 
is now replaced by a non-archimedean field, and the log-of-modulus
map is replaced by the nonarchimedean valuation~\cite{kapranov}. In the present
work, we will be interested by amoebas of subsets
defined by inequalities as well as by equalities. This leads to the following notion.
\begin{definition}
If $\mathcal{S}$ is a semi-algebraic subset of $\rcfield_{>0}^n$, where
$\rcfield$ is a real closed field equipped with a nonarchimedean
valuation $\val$, the {\em amoeba} of $\mathcal{S}$ is the set
$\val (\mathcal{S}):=\{(\val f_1,\dots,\val f_n)\mid (f_1,\dots,f_n)\in \mathcal{S}\}\subset \R^n$.
\end{definition}

Such amoebas have a polyhedral structure. Recall that a set $\slin \subset \doag^{n}$ is \emph{basic semilinear} if it is of the form
\[
\slin = \{(x_1,\dots,x_n) \in \doag^{n} \colon \forall i  \in[p],\; \ell_{i}(x_1,\dots,x_n) > h^{(i)} \wedge \forall i \in[q]\setminus[p], \ell_{i}(x_1,\dots,x_n) = h^{(i)} \} \, ,
\]
where $\ell_{1},\dots,\ell_q$ are linear forms with integer coefficients,
$h^{(1)},\dots,h^{(q)} \in \doag$, and $1\leq p\leq q$. We say that $\slin$ is \emph{semilinear} if it is a finite union of basic semilinear sets.

The following result is derived in~\cite{skomra} as a corollary of 
a quantifier elimination result for valued fields of 
Denef~\cite{denef_p-adic_semialgebraic} and Pas~\cite{pas_cell_decomposition}.
A related result was proved by Alessandrini, in the setting of o-minimal geometry~\cite{alessandrini2013}.
\begin{theorem}[{\cite[Theorem~4]{skomra}, see also~\cite[Theorem 3.11]{alessandrini2013}}]\label[theorem]{theorem:image_finite_union_of_ri_polyh}
Let $\vfield$ be a real closed field equipped with a convex nonarchimedean valuation~$\val$ with value group $\R$. Furthermore, suppose that the set $\mathcal{S} \subset \vfield_{>0}^{n}$ is semialgebraic. Then $\val(\mathcal{S})$ is a semilinear
subset of $\R^n$. 
\end{theorem}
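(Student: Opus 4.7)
The theorem is a quantifier elimination statement in disguise: it asserts that the projection of a semialgebraic subset of $\vfield_{>0}^n$ under the coordinatewise valuation map is semilinear. The natural strategy is model-theoretic. I would work in a three-sorted structure consisting of the valued field $\vfield$, its residue field $\res$, and its value group $\R$, equipped with an angular component map $\angular\colon \vfield \setminus \{0\} \to \res \setminus \{0\}$ that is compatible with $\val$; for Puiseux-like fields, $\angular(f)$ is simply the leading coefficient of $f$.

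First, using that $\val(\salg_1 \cup \salg_2) = \val(\salg_1) \cup \val(\salg_2)$, I would reduce to the case where $\salg$ is basic semialgebraic, defined by finitely many conditions $P_i > 0$ and $P_j = 0$, together with the coordinate positivity constraints $x_k > 0$. The heart of the argument is then a Pas-style quantifier elimination: every formula in the valued-field sort is equivalent, modulo the theory of the structure, to a boolean combination of (i) linear conditions on valuations of polynomials in $x$, with integer coefficients coming from monomial exponents and constants in $\R$ coming from the valuations of polynomial coefficients, and (ii) conditions on angular components of those polynomials, lying in the residue-field sort.

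The convexity hypothesis drives the interplay between the two sorts on positive tuples. Expanding $P(x) = \sum_I c_I x^I$ and repeatedly applying~\eqref{e-morphism}, one obtains
\[
\val P(x) \;\leq\; \max_I \Bigl(\val c_I + \sum_j I_j\,\val x_j\Bigr),
\]
with equality unless the residue-field sum of the dominant monomials cancels. Which monomials achieve the maximum is governed by linear inequalities in the $\val x_j$, and whether a cancellation occurs at the dominant level is a polynomial condition on the angular components $\angular(x_j)$. Thus each atomic condition $P > 0$ or $P = 0$ unfolds, piecewise, into the form (i)+(ii) above.

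The final step exploits the orthogonality between the residue field and the value group in Pas-style structures. The residue field inherits a real closed structure from $\vfield$, and the angular component of a positive element can be chosen freely among positive residues independently of its valuation. Projecting onto $\R^n$ amounts to existentially quantifying over angular components: a nonempty residue-field semialgebraic condition contributes the entire value-group cell, while an unsatisfiable one contributes nothing. What survives is a finite boolean combination of linear conditions with integer coefficients in the $\val x_i$ and constants in $\R$, i.e., a semilinear set. The main obstacle is carrying out the quantifier elimination rigorously and tracking that the integrality of the monomial exponents is preserved through the reduction; this is the substantive content of~\cite{skomra,alessandrini2013}, which rests on the cell decomposition theorems of Denef and Pas.
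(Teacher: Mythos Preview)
The paper does not prove this theorem; it is quoted from~\cite{skomra} (with a related result attributed to~\cite{alessandrini2013}) and the surrounding text explicitly says it is ``derived in~\cite{skomra} as a corollary of a quantifier elimination result for valued fields of Denef~\cite{denef_p-adic_semialgebraic} and Pas~\cite{pas_cell_decomposition}.'' Your sketch via a three-sorted Denef--Pas structure, angular components, and the orthogonality of residue field and value group is precisely the mechanism the paper points to, so you are aligned with the cited approach; just be aware that there is no in-paper proof to compare against, and that the rigorous quantifier elimination you defer to is indeed the nontrivial input here rather than something established within this paper.
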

It is also known that this 
semilinear subset is closed in the Euclidean topology~\cite[Theorem~10]{skomra}. 

In the sequel, we shall be interested in the amoebas of 
semialgebraic sets over a nonarchimedean field,
especially, the set of totally positive matrices
and the ``totally positive
part'' of the Grassmanian. Hence, it may help to keep in mind \Cref{theorem:image_finite_union_of_ri_polyh}. However, we emphasize that our main
results do not rely on this theorem, but rather proceed by direct characterizations.

We now give examples of real closed nonarchimedean ordered
fields with a convex valuation.
\begin{example}\label{ex-hahn}
A {\em Hahn series} is of the form
\begin{align}
\sum_{\lambda \in \Lambda}
a_\lambda t^\lambda
\label{e-hahn}
\end{align}
where $a_\lambda\in \R\setminus 0$ and 
$-\Lambda$  is a well ordered subset of $\R$,
with the convention that the latter sum is zero if $\Lambda$
is empty. This field is denoted by $[[\R^{(\R,\leq)}]]$ in~\cite[(6.10)]{ribenboim}, where it is shown to be real closed. 
\end{example}

The following smaller field
is a popular choice in tropical geometry~\cite{MARK}.
\begin{example}\label{ex-genpuiseux}
A {\em generalized Puiseux series} is a series
of the form~\eqref{e-hahn}, where $\Lambda$ is either 
empty, or finite, or coincides with the set of elements of a sequence
of real numbers decreasing to $-\infty$. This is precisely
the field considered in the introduction,
and denoted by $\K$ there.
It follows for instance from a result of~\cite{MARK}
that this field is real closed.
Indeed, the latter reference considers the field of formal generalized
Puiseux series with {\em complex} coefficients. This field,
which can be identified to the quadratic extension $\K[\sqrt{-1}]$, is shown
to be algebraically closed in~\cite{MARK}. This implies
that $\K$ is real closed. In the sequel, we shall 
often think of $\K$ as the subset of ``real'' elements of $\K[\sqrt{-1}]$,
extending the classical terminology and notation for complex numbers,
like ``real part'', ``imaginary part'',  and ``modulus'',
 to $\K[\sqrt{-1}]$. E.g., the modulus of $c=a+(\sqrt{-1})b$
with $a,b\in \K$ is $|c|=\sqrt{a^2+b^2}\in \K$.
\end{example}
\begin{example}\label{ex-cvg}
The field $[[\R^{(\R,\leq)}]]$ of Hahn series and the field $\K$ of
generalized Puiseux series have subfields,
consisting of series that are absolutely convergent
for all sufficiently small positive values of $t$. We shall denote by 
$[[\R^{(\R,\leq)}]]_{\text{cvg}}$ and $\K_{\text{cvg}}$ these two fields, respectively.
van den Dries and Speissegger showed
in~\cite[Corollary 9.2]{Dries1998} that 
$[[\R^{(\R,\leq)}]]_{\text{cvg}}$ 
is real closed; indeed, $[[\R^{(\R,\leq)}]]_{\text{cvg}}$ 
is precisely the field of germs of functions definable in a certain 
o-minimal structure $\R_{\text{an}*}$.
They also observed in Section 10.2, {\em ibid.},
that the same proofs apply to $\K_{\text{cvg}}$, 
which entails in particular that this field is also real closed.
We finally note that the field $\K_{\text{cvg}}$
is isomorphic to the field of 
(absolutely convergent)
{\em generalized Dirichlet series} considered  by Hardy and Riesz~\cite{hardy}.
Ordinary Dirichlet series are of the form $\sum_{n\geq 1} a_n n^{-s}$; 
they can be identified to series 
of the form~\eqref{e-puiseux}, setting $\lambda_n = -\log n$, with the substitution $s:=\exp(t)$. 
\end{example}



A map $\xsec \colon \R \to \vfield^{*}$ is called a \emph{cross-section} 
if it is a multiplicative morphism such that $\val \circ \xsec$ is the identity map. For instance, if $\vfield$ is any of the ordered fields in Examples~\ref{ex-hahn}--\ref{ex-cvg}, the map $y\mapsto t^y$ is a cross section. We shall frequently use this cross section in the proofs which follow.
More generally, every real closed valued field with a convex
nonarchimedean valuation and value group $\R$
has a cross-section~\cite[Lemma~3]{skomra}.

To keep our exposition as concrete as possible, we shall assume in the sequel
that $\vfield=\K$ is the field of generalized Puiseux
series considered in Example~\ref{ex-genpuiseux}. Our results hold, without changes, if $\K$ is replaced
by the field of Hahn series (Example~\ref{ex-hahn}), or by fields
of absolutely convergent series like the ones of Example~\ref{ex-cvg}.
More generally, we leave it
to the reader to check that our arguments hold
in any real closed field with a convex nonarchimedean valuation
and value group $\R$,
referring to~\cite{alessandrini2013} and to~\cite{skomra} for more details on
the tropicalization of semialgebraic sets over ordered nonarchimedean fields,
including (in~\cite{skomra}) situations in which the value group is not necessarily $\R$.

\section{Characterizing tropical total positivity in terms of $2\times 2$ minors}\label{sec-global}

\label{2dd} 

This section follows the notation
and results in~\cite[\S 3.1]{Fallat&Johnson}. 
Total positivity of an~$n\times m$ matrix may be verified by  the 
positivity of its~$nm$ initial minors~\cite{Mariano&Pena}. 
We  show that in the tropical
setting, total nonnegativity can be checked by considering
$nm$ 
~$2\times 2$ solid minors. We also show
that if these minors are non-$\zero$, then the matrix 
is uniquely determined by them. 
We shall also see that tropical total positivity implies
some kind of diagonal dominance. 


We shall need the following immediate fact.
\begin{lem}\label{inv} Let $\pi\in S_n$ be a permutation 
different from the identity permutation~$\Id$.
 There exists $i,j$ such that $i>j$ and $\pi(i)<\pi(j)$, called an inversion of~$i,j$ in~$\pi$. 
 Moreover, we may choose $j=\pi(i)$.\hfill\qed 
\end{lem}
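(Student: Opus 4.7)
The claim has two parts: first, the existence of some inversion, and second, the refinement that we can take the second index to be $j=\pi(i)$.

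For the first part I would argue that the set $S:=\{i\in[n]\mid \pi(i)<i\}$ is nonempty. Indeed, if $\pi(k)\geq k$ for every $k\in[n]$, then summing these inequalities forces $\sum_k \pi(k)\geq \sum_k k$, and since $\pi$ is a bijection both sides are equal to $n(n+1)/2$. Hence each inequality is an equality and $\pi=\Id$, contradicting the hypothesis. So $S\neq\emptyset$.

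For the second (and main) part, let $i:=\min S$. Set $j:=\pi(i)$, so that $j<i$ by definition of $S$. We must verify the remaining inequality $\pi(i)<\pi(j)$, i.e. $\pi(j)>j$. By minimality of $i$ in $S$, the index $j<i$ does not belong to $S$, which gives $\pi(j)\geq j$. To upgrade this to strict inequality, note that $\pi$ is injective and $\pi(i)=j$ with $i\neq j$, hence $\pi(j)\neq j$. Combining the two, $\pi(j)>j=\pi(i)$, which is the required inversion with the prescribed form $j=\pi(i)$.

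The argument is essentially one-line and poses no real obstacle; the only thing to be mindful of is the passage from $\pi(j)\geq j$ to $\pi(j)>j$, which is where the injectivity of $\pi$ (together with $i\neq j$) enters decisively. Once this refinement is secured, the first assertion follows a fortiori.
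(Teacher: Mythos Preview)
Your argument is correct. The paper itself does not supply a proof of this lemma (it is stated with a \qed\ and labeled an ``immediate fact''), so there is nothing to compare; your choice of $i=\min\{k\mid \pi(k)<k\}$ together with injectivity to rule out $\pi(j)=j$ is a clean way to obtain the inversion in the required form $j=\pi(i)$.
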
 


We recall that a real matrix~$A$ is a \textit{Monge matrix} if and only 
if it satisfies the \textit{Monge property}
\begin{align}
A_{i,j}+ A_{i',j'}\geq A_{i,j'}+ A_{i',j},
\qquad \text{for all } i<i' \text{ and } j<j' \enspace .
\label{e-def-Monge}
\end{align}
A \textit{strict} Monge matrix is obtained by requiring the above
inequalities to be strict. (More precisely, the Monge property is defined by the reversed inequalities, 
whereas the definition above is the anti-Monge property. We omit the notion ``anti" throughout.) 
Note that~$\TP^{\trop}_2$ and~$\TN^{\trop}_2(\R)$ are by definition the sets of 
strict Monge matrices and respectively  Monge matrices. 
The following observation is classical.
\begin{lem}[\cite{BKR}]\label{subsequent0}
A real matrix is a (strict) Monge matrix 
if and only if the (strict) 
relations~\eqref{e-def-Monge} hold for consecutive values
of $i,i'$ and of $j,j'$.\hfill \qed
\end{lem}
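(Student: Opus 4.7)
The nontrivial direction is to deduce the full Monge property from its restriction to consecutive indices; the converse is trivial since consecutive pairs are a special case. Both the strict and non-strict versions are handled by the same argument with $>$ in place of $\geq$, so I will focus on the non-strict case.

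The plan is a two-stage telescoping induction. In the first stage, I fix a column index $j$ together with $j'=j+1$ and prove, by induction on $k=i'-i\geq 1$, that
\[
A_{i,j}+A_{i',j+1}\geq A_{i,j+1}+A_{i',j}\quad\text{for all } i<i'.
\]
The base $k=1$ is the hypothesis. For the induction step, suppose $i'-i=k+1$ and pick an intermediate index; concretely, apply the consecutive relation
\[
A_{i,j}+A_{i+1,j+1}\geq A_{i,j+1}+A_{i+1,j}
\]
together with the inductive hypothesis
\[
A_{i+1,j}+A_{i',j+1}\geq A_{i+1,j+1}+A_{i',j},
\]
sum the two and cancel the common terms $A_{i+1,j}$ and $A_{i+1,j+1}$ to obtain the desired inequality.

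In the second stage, I fix any $i<i'$ (now arbitrary, not only consecutive) and prove, by induction on $\ell=j'-j\geq 1$, that
\[
A_{i,j}+A_{i',j'}\geq A_{i,j'}+A_{i',j}\quad\text{for all } j<j'.
\]
The base case $\ell=1$ is precisely what the first stage delivered. The inductive step is entirely symmetric to the one above: combine the first-stage inequality for columns $j,j+1$ with the inductive hypothesis for columns $j+1,j'$ and cancel. This yields the general Monge property.

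The strict case is identical with $>$ throughout, using \Cref{inv} nowhere (it is not needed here; that lemma is reserved for later arguments). The only mildly subtle point is keeping track that each telescoping step preserves strict inequality when both inputs are strict, which is immediate since summing two strict inequalities yields a strict inequality. No real obstacle arises beyond bookkeeping the two-stage induction.
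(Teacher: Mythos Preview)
Your proof is correct. The paper does not prove this lemma itself (it is cited from \cite{BKR} and marked with a \qed), but your two-stage telescoping induction is exactly the standard argument for this fact, and indeed matches a proof that appears in the paper's source in commented-out form (first extending one index gap by induction with the other held consecutive, then extending the second).
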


We shall also use the following easy observation,
showing that the valuation restricted to the set
of nonnegative (generalized) Puiseux series
is an order preserving map:
\begin{align}
f,g\in \K_{\geq 0} \text{ and }
f \geq g \implies \val f \geq \val g \enspace.
\label{val-op}
\end{align}


We start by an elementary lemma.
\begin{lem}\label{2trop2}  $\val(\TN_2)=\TN^\trop_2$. \end{lem}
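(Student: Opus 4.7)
My plan is to prove the equality by establishing each inclusion separately, using the order-preserving property of the valuation \eqref{val-op} for the forward inclusion and the cross-section $y \mapsto t^y$ for the reverse inclusion. First I would unpack the definitions: $\puiseuxA \in \TN_2(\K)$ means $\puiseuxA_{i,j} \geq 0$ for all $i,j$ and $\puiseuxA_{i,j}\puiseuxA_{i',j'} \geq \puiseuxA_{i,j'}\puiseuxA_{i',j}$ for all $i<i'$, $j<j'$; meanwhile $A \in \TN^\trop_2$ means $A_{i,j} \in \rmax$ and $A_{i,j} + A_{i',j'} \geq A_{i,j'} + A_{i',j}$ (the Monge property, interpreted with the usual convention $-\infty + x = -\infty$), since the $2\times 2$ tropical permanent is $\max(A_{i,j}+A_{i',j'},\, A_{i,j'}+A_{i',j})$ and tropical nonnegativity of the minor amounts to the identity permutation achieving the maximum or the two permutations tying.

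For the inclusion $\val(\TN_2) \subseteq \TN_2^\trop$, I would take $\puiseuxA \in \TN_2$ and apply \eqref{val-op} to both sides of the inequality $\puiseuxA_{i,j}\puiseuxA_{i',j'} \geq \puiseuxA_{i,j'}\puiseuxA_{i',j}$ (which holds between nonnegative elements of $\K$), then use the multiplicativity of $\val$ from \eqref{e-init} to obtain the Monge inequality on the valuation matrix $A := \val(\puiseuxA)$.

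For the inclusion $\TN_2^\trop \subseteq \val(\TN_2)$, given $A \in \TN_2^\trop$ I would construct an explicit lift via the cross-section: set $\puiseuxA_{i,j} := t^{A_{i,j}}$ when $A_{i,j} \in \R$ and $\puiseuxA_{i,j} := 0$ when $A_{i,j} = -\infty$. Clearly $\val(\puiseuxA_{i,j}) = A_{i,j}$ and the entries are nonnegative in $\K$. For the $2\times 2$ minors: when all four relevant entries of $A$ are finite, the Monge inequality $A_{i,j}+A_{i',j'} \geq A_{i,j'}+A_{i',j}$ and the order-preservation $t^a \geq t^b \iff a \geq b$ give $\puiseuxA_{i,j}\puiseuxA_{i',j'} - \puiseuxA_{i,j'}\puiseuxA_{i',j} = t^{A_{i,j}+A_{i',j'}} - t^{A_{i,j'}+A_{i',j}} \geq 0$ in $\K$.

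The only slightly delicate point, which I would handle as a short case analysis, is when some entry of the $2\times 2$ submatrix equals $-\infty$. If $A_{i,j} = -\infty$, the Monge property forces $A_{i,j'} + A_{i',j} = -\infty$, hence at least one of $A_{i,j'}, A_{i',j}$ is $-\infty$; so one of $\puiseuxA_{i,j'}, \puiseuxA_{i',j}$ vanishes and the minor becomes $\puiseuxA_{i,j}\puiseuxA_{i',j'} \geq 0$ (either $0$ or a single positive monomial in $t$). A symmetric argument handles $A_{i',j'} = -\infty$; if instead $A_{i,j'} = -\infty$ or $A_{i',j} = -\infty$, the anti-diagonal product already vanishes and the minor is trivially nonnegative. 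I do not anticipate a real obstacle here: everything reduces to \eqref{val-op} and the monotonicity of $y\mapsto t^y$, with the $-\infty$ bookkeeping being the only mild subtlety.
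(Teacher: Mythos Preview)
Your proposal is correct and follows essentially the same approach as the paper: for the forward inclusion you apply the order-preserving property~\eqref{val-op} to the $2\times 2$ minor inequalities, and for the reverse inclusion you use the canonical lift $\puiseuxA_{ij}:=t^{A_{ij}}$. The paper dispatches the reverse inclusion with ``we immediately check,'' whereas you spell out the $-\infty$ bookkeeping; the only (harmless) imprecision in your write-up is that in the case $A_{i,j}=-\infty$ the diagonal product $\puiseuxA_{i,j}\puiseuxA_{i',j'}$ is necessarily $0$, not ``a single positive monomial.''
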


\begin{proof} 
Suppose that $\puiseuxA=(\puiseuxA_{ij})\in \TN_2$,
and let $A_{ij}:= \val \puiseuxA_{ij}$.
By definition of $\TN_2$, for all $i<j$ and $k<l$, 
we have $\puiseuxA_{ik}\puiseuxA_{jl} \geq \puiseuxA_{il}\puiseuxA_{jk}$,
and by~\eqref{val-op},
${A}_{ik}+{A}_{jl} \geq {A}_{il}+{A}_{jk}$,
which implies that $A\in \TN^\trop_2$. Conversely, if $A\in \TN^\trop_2$,
we immediately check that the {\em canonical lift} $\puiseuxA$ of $A$,
$\puiseuxA_{ij}:= t^{A_{ij}}\in \K$, belongs to $\TN$.


\end{proof}

We now show that the tropical totally nonnegative real matrices are precisely the Monge matrices. 

\begin{theorem}
\label{trop2} $\ \TP^{\trop}=\TP^{\trop}_2\ $ and $\ \TN^{\trop}=
\TN^{\trop}_2$.
\end{theorem}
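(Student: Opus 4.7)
The inclusions $\TP^{\trop}\subseteq \TP^{\trop}_2$ and $\TN^{\trop}\subseteq \TN^{\trop}_2$ are immediate, since the $2\times 2$ minors are among all minors. So the task is to show the reverse inclusions, namely that if every $2\times 2$ minor of $A$ is tropically (strictly) positive, equivalently $A$ satisfies the (strict) Monge property~\eqref{e-def-Monge}, then every minor of $A$ is tropically (strictly) positive in the appropriate sense.

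Fix a $k\times k$ square submatrix $A[I,J]$ of $A$, with rows $I=\{i_1<\dots<i_k\}$ and columns $J=\{j_1<\dots<j_k\}$. Its tropical permanent is $\max_\sigma w(\sigma)$ where $w(\sigma):=\sum_{l\in[k]} A_{i_l,j_{\sigma(l)}}$ and $\sigma$ ranges over permutations of $[k]$. The identity permutation $\Id$ of $[k]$ is even and corresponds to the diagonal of the submatrix. My plan is to prove the following key claim by induction on the number of inversions of $\sigma$: for $A$ Monge one has $w(\Id)\ge w(\sigma)$ for every $\sigma$, with strict inequality whenever $A$ is strictly Monge and $\sigma\ne\Id$.

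For the inductive step, if $\sigma\ne\Id$, \Cref{inv} produces indices $j<i$ in $[k]$ such that $\sigma(i)<\sigma(j)$. Let $\sigma'$ agree with $\sigma$ outside $\{i,j\}$ and swap the values, so $\sigma'(i)=\sigma(j)$ and $\sigma'(j)=\sigma(i)$. Then $\sigma'$ has strictly fewer inversions than $\sigma$. Applying the Monge inequality~\eqref{e-def-Monge} to the $2\times 2$ subblock supported on rows $i_j<i_i$ and columns $j_{\sigma(i)}<j_{\sigma(j)}$ yields
\[
A_{i_j,j_{\sigma(i)}}+A_{i_i,j_{\sigma(j)}}\ge A_{i_j,j_{\sigma(j)}}+A_{i_i,j_{\sigma(i)}} \enspace,
\]
i.e.\ $w(\sigma')\ge w(\sigma)$, and the inequality is strict when $A$ is strictly Monge. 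By the induction hypothesis $w(\Id)\ge w(\sigma')\ge w(\sigma)$, which proves the claim.

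Concluding: in both cases the even permutation $\Id$ attains the maximum weight in every square submatrix. When $A\in\TP^{\trop}_2$ (strict Monge), $\Id$ is in fact the unique maximizer, so every minor of $A$ is tropically positive, giving $A\in\TP^{\trop}$. When $A\in\TN^{\trop}_2$ (Monge), either $\Id$ is the unique maximizer, or some odd permutation also attains the maximum weight; in the second case both even and odd maximal-weight permutations exist and the submatrix is sign-singular, so the minor is tropically nonnegative in either case, giving $A\in\TN^{\trop}$. The main technical point is the local uncrossing argument based on~\eqref{e-def-Monge}; once that is set up the induction on the number of inversions runs routinely, and I do not anticipate any real obstacle.
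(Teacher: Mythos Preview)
Your proof is correct and rests on the same core idea as the paper's: a $2\times2$ Monge swap that removes an inversion cannot decrease the weight. The organization is slightly different, however. The paper argues by contrapositive: starting from a maximum-weight \emph{odd} permutation $\pi$ in some submatrix, a single swap at an inversion either produces an even maximum-weight permutation (contradiction) or exhibits a $2\times2$ minor that is tropically negative. You instead argue directly, iterating swaps all the way down to the identity via induction on the inversion number. Both arguments are equally short; yours has the pleasant side effect of showing that in every square submatrix the identity permutation attains the permanent, which is precisely the diagonal-dominance statement the paper proves separately as \Cref{DD}. One small point you could make explicit for the reader: swapping the values at an inversion $(j,i)$ does strictly decrease the total number of inversions (by $1$ plus twice the number of indices $k$ between $j$ and $i$ with $\sigma(i)<\sigma(k)<\sigma(j)$), so the induction is well founded even when the swap is not between adjacent positions.
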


In the proof, and in the sequel, for all $I\subset[n]$
and $J\subset[m]$, we denote by $A_{I,J}$ the $I\times J$ submatrix of $A$. 
\begin{proof}
By definition $\TP^{\trop}\subset \TP^{\trop}_2$ and~$\TN^{\trop}
\subset \TN^{\trop}_2$.

Assume~$A\notin \TN^{\trop}$. Therefore, for some~$I,J$ s.t.~$|I|=|J|>2$,  
all the permutations of maximum weight in~$\per(A_{I,J})$ are odd.  Let~$\pi$ be such an odd permutation, and denote by~$M$ the~$2\times 2$ 
submatrix~$A_{\{j,i\},\{\pi(i),\pi(j)\}}$,  for some inversion of~$i>j$ in~$\pi$. 
Suppose that~$\per(M)=A_{j,\pi(i)}\odot A_{i,\pi(j)}\geq A_{i,\pi(i)}\odot A_{j,\pi(j)}$, and consider~$\sigma:=\pi\circ (\pi(i)\ \pi(j))$, where $(k,l)$ 
denotes the transposition of indices $k,l$, and $\circ$
denotes the composition of permutations. Then, $\sigma$
is still of maximum weight
in~$\per(A_{I,J})$, and it is even, contradicting the assumption.
It follows that~$\per(M)=
A_{i,\pi(i)}\odot A_{j,\pi(j)}>A_{j,\pi(i)}\odot A_{i,\pi(j)} ,$ and~$A\notin 
\TN^{\trop}_2$.
 
Similarly, if~$A\notin\TP^{\trop}$, then there exists
at least one odd permutation $\pi$ 
of maximum weight in~$\per(A_{I,J})$. If~$\per(M)=A_{j,\pi(i)}\odot A_{i,\pi(j)}
>A_{i,\pi(i)}\odot A_{j,\pi(j)},$ then the weight of the permutation~$\pi\circ (\pi(i)\ 
\pi(j))$ is strictly bigger than the maximal weight of~$\pi$ in~$\per(A_{I,J})$. 
Thus~$\per(M)=A_{i,\pi(i)}\odot A_{j,\pi(j)}\geq A_{j,\pi(i)}\odot A_{i,\pi(j)}  ,$ 
and therefore~$A\notin \TP^{\trop}_2$.
Additionally, if~$\per(A_{I,J})=\minusinfty$, then~$A$ has~$\minusinfty$ entry, and 
therefore~$A\notin \TP^{\trop}_2$.
\end{proof}

It seems to be a general principle in tropical geometry
that properties over the tropical semiring translate to 
weaker or approximate properties 
over fields, see e.g.~\cite{1307.3681,logmajorization2013} for applications
of this principle to the localization of roots
of polynomials. According to the same principle,
we expect the characterization of tropical totally
positive matrices in Theorem~\ref{trop2} to translate
to a sufficient condition for the total positivity of matrices
over a real field. We next give such a condition.

Given $C\geq 1$, we denote by~$\TN_{2,C}$ the set of matrices~$\puiseuxA$ such that
\[  \puiseuxA_{i,j}\puiseuxA_{i',j'}\geq C\puiseuxA_{i,j'}\puiseuxA_{i',j},\ \forall i<i'\text{ and }j<j'.
\]
The set~$\TP_{2,C}$ denotes the subset of ~$\TN_{2,C}$ obtained by requiring each of the above inequalities to be strict. 
Note that the Vandermonde  matrix~$(V_{i,j})=(\lambda_i^{j-1})$ with $0<\lambda_1<\dots< \lambda_n$ is in~$\TP_{2,C} $ as soon as $\lambda_{i+1}/\lambda_i >C $,
for all $1\leq i\leq n-1$, 
and therefore $\emptyset\ne\TP_{2,C} \subset\TN_{2,C}$ for every~$C\geq 1$.
The following theorem, which shows
that
\[ \TN_{2,C}\subset\TN\subset\TN_{2}
\]
for some value of $C$ depending on $n,m$, may
be thought of as an archimedean analogue
of Theorem~\ref{trop2}. Note that this theorem is
valid in particular for matrices with entries in the field
of real numbers.

\begin{theorem}
\label{TNc}
If $\puiseuxA\in(\TN_{2,C})^{n\times m}$ 
with $C\geq(\min(n,m)-1)^2$, then $\puiseuxA\in\TN$.
Similarly, 
under the same condition on $C$, 
if $\puiseuxA\in(\TP_{2,C})^{n\times m}$ then $\puiseuxA\in\TP$.
\end{theorem}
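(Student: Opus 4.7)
The approach is to expand the Leibniz formula
\[
\det(B)=\sum_{\sigma\in S_k}\sgn(\sigma)\prod_{i=1}^k B_{i,\sigma(i)}
\]
for every $k\times k$ submatrix $B$ of $\puiseuxA$ with $k\leq\min(n,m)$, and to show that the diagonal product $\Pi_{\Id}:=\prod_{i=1}^k B_{i,i}$ strongly dominates the remaining $|S_k|-1$ summands. Since every submatrix of an element of $\TN_{2,C}$ (resp.\ $\TP_{2,C}$) inherits the same $2\times 2$ inequalities, it suffices to prove $\det(B)\geq 0$ (resp.\ $>0$) for such a $B$.

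The first key step is the per-permutation slack bound: for every $\sigma\in S_k$,
\[
\Pi_\sigma:=\prod_{i=1}^k B_{i,\sigma(i)} \;\leq\; C^{-\inv(\sigma)}\,\Pi_{\Id}.
\]
I would prove this by induction on $\inv(\sigma)$ using adjacent transpositions. If $\sigma\neq\Id$, there is some index $i$ with $\sigma(i)>\sigma(i+1)$ (otherwise $\sigma$ would be increasing, hence the identity); setting $\sigma':=\sigma\circ(i\ i{+}1)$, the $\TN_{2,C}$ inequality applied to rows $\{i,i+1\}$ and columns $\{\sigma(i+1),\sigma(i)\}$ yields $\Pi_{\sigma'}\geq C\,\Pi_\sigma$, and $\inv(\sigma')=\inv(\sigma)-1$; the bound for $\sigma$ then follows from the inductive hypothesis for $\sigma'$.

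The second step combines this with the Mahonian identity $\sum_{\sigma\in S_k}q^{\inv(\sigma)}=[k]_q!:=\prod_{j=1}^{k}(1+q+\cdots+q^{j-1})$ at $q=1/C$:
\[
\det(B)\;\geq\;\Pi_{\Id}-\sum_{\sigma\neq\Id}\Pi_\sigma \;\geq\; \bigl(2-[k]_{1/C}!\bigr)\,\Pi_{\Id}.
\]
Since $[1]_q=1$ and $[j]_q\leq(1-q)^{-1}$ for every $j\geq 1$, one obtains $[k]_q!\leq(1-q)^{-(k-1)}$; with $q=1/C\leq 1/(k-1)^2$ an elementary calculus check gives $[k]_q!\leq\exp\!\bigl((k-1)/((k-2)k)\bigr)\leq e^{2/3}<2$ for $k\geq 3$, while the cases $k\leq 2$ are immediate. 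This proves $\det(B)\geq 0$, hence $\puiseuxA\in\TN$.

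For the $\TP_{2,C}$ case, I would first observe that every diagonal entry of $\puiseuxA$ is strictly positive: otherwise, if $\puiseuxA_{i,i}=0$, the strict $2\times 2$ condition on rows $\{i,i+1\}$ (or $\{i-1,i\}$) and analogous columns would reduce to $0>C\cdot(\text{nonnegative})$, a contradiction. Hence $\Pi_{\Id}>0$, every step of the slack induction becomes strict, and the same display gives $\det(B)>0$. The only delicate point I anticipate is the numerical inequality $[k]_{1/(k-1)^2}!\leq 2$; it is almost saturated at $k=3$ (where the bound reads $16/9<2$), and this is precisely what forces the quadratic growth of the threshold $C\geq(\min(n,m)-1)^2$ in the hypothesis.
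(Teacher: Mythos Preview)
Your argument for the $\TN$ part is correct and takes a genuinely different route from the paper. The paper decomposes each nonidentity permutation into cycles and shows, in a preliminary lemma, that a cycle of length $\ell$ contributes a factor at most $C^{-(\ell-1)}$ relative to the diagonal product; it then bounds the maximal cycle mean $\rho_{\max}$ of the off-diagonal part by $C^{-1/2}$ and invokes a spectral argument (Friedland's inequality $\rho(F)\leq\rho_{\max}(F)\,\rho(\pat F)$ together with $\det(I+F)=\prod_i(1+\lambda_i)$) to conclude. You instead bound each permutation term by $C^{-\inv(\sigma)}\Pi_{\Id}$ via adjacent transpositions---this is actually a sharper per-permutation estimate than the cycle bound---and then control the full sum by the $q$-factorial $[k]_{1/C}!$. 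Your route is entirely elementary and self-contained; the paper's route is heavier but isolates a reusable spectral lemma (the condition $\rho_{\max}<1/(n-1)$ forces $\det$ to share the sign of the diagonal product) that is invoked again later in the paper.

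For the $\TP_{2,C}$ case there is a small gap. You argue that the main-diagonal entries $\puiseuxA_{i,i}$ are positive, but what you need is $\Pi_{\Id}>0$ for \emph{every} square submatrix $B$, i.e., that every entry of $\puiseuxA$ is positive. This does not follow from the strict $2\times 2$ inequalities alone: for instance $\bigl(\begin{smallmatrix}2&1\\0&1\end{smallmatrix}\bigr)$ satisfies $2\cdot 1>C\cdot 1\cdot 0$ for every $C$, yet has a zero entry and so is not in $\TP$. The paper handles this by taking ``all entries of a matrix in $\TP_{2,C}$ are positive'' as part of the definition (it states this explicitly at the end of the proof of the cycle lemma); once you adopt that convention your strict argument goes through unchanged.
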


The proof of this result relies on a series of auxiliary results.

\begin{lem}\label{cyclebound}
Let $\puiseuxA\in(\TN_{2,C})^{n\times n}$.
Let $\gamma$ be any cyclic permutation of a subset $I$
of elements of $[n]$.
Then, 
 \begin{equation}\label{cycleb}\prod_{j\in I}\puiseuxA_{j,\gamma(j)}\leq \frac{1}{C^{|I|-1}} \prod_{j\in I}\puiseuxA_{j,j}\enspace.\end{equation}   
Moreover, the above inequality is strict as soon as~$\puiseuxA\in(\TP_{2,C})^{n\times n}$.
\end{lem}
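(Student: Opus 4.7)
The plan is to prove the inequality by induction on $k := |I|$, peeling off one element of the cycle at each step. The base case $k = 1$ is trivial, since $\gamma$ is the identity on the singleton $I = \{j\}$ and both sides of \eqref{cycleb} equal $\puiseuxA_{j,j}$ (note $C^0 = 1$). The whole argument proceeds uniformly for the nonstrict and strict statements; for clarity I describe the $\TN_{2,C}$ version first.

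For the inductive step with $k \geq 2$, I would let $i^\star := \max I$, and set $j := \gamma^{-1}(i^\star)$ and $l := \gamma(i^\star)$. Since $\gamma$ is a single cycle on $I$ of length $\geq 2$, both $j$ and $l$ lie in $I\setminus\{i^\star\}$ and therefore satisfy $j < i^\star$ and $l < i^\star$. Applying the defining $\TN_{2,C}$ inequality to the $2\times 2$ submatrix on rows $\{j,i^\star\}$ and columns $\{l,i^\star\}$ (both with $i^\star$ on top) gives
\[
\puiseuxA_{j,l}\,\puiseuxA_{i^\star,i^\star} \;\geq\; C\,\puiseuxA_{j,i^\star}\,\puiseuxA_{i^\star,l},
\]
that is, $\puiseuxA_{j,i^\star}\puiseuxA_{i^\star,l} \leq C^{-1}\puiseuxA_{j,l}\puiseuxA_{i^\star,i^\star}$.

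The key observation is that if $\gamma'$ denotes the cyclic permutation of $I\setminus\{i^\star\}$ obtained by bypassing $i^\star$ (so $\gamma'(j) = l$ and $\gamma'(s) = \gamma(s)$ for $s \in I\setminus\{i^\star,j\}$), then the factor $\puiseuxA_{j,l}$ on the right is exactly what turns the untouched portion of the cycle product into $\prod_{s \in I\setminus\{i^\star\}}\puiseuxA_{s,\gamma'(s)}$. Substituting and using the inductive hypothesis for the length $k-1$ cycle $\gamma'$ yields
\[
\prod_{s\in I}\puiseuxA_{s,\gamma(s)} \;\leq\; \frac{\puiseuxA_{i^\star,i^\star}}{C}\prod_{s\in I\setminus\{i^\star\}}\puiseuxA_{s,\gamma'(s)} \;\leq\; \frac{\puiseuxA_{i^\star,i^\star}}{C}\cdot\frac{1}{C^{k-2}}\prod_{s\in I\setminus\{i^\star\}}\puiseuxA_{s,s} \;=\; \frac{1}{C^{k-1}}\prod_{s\in I}\puiseuxA_{s,s},
\]
as required.

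There is no serious obstacle; the only thing worth checking carefully is the degenerate case $k = 2$, where $\gamma$ is the transposition $(j,i^\star)$ and $j = l$. In that case the $2\times 2$ inequality applied above specializes to $\puiseuxA_{j,i^\star}\puiseuxA_{i^\star,j} \leq C^{-1}\puiseuxA_{j,j}\puiseuxA_{i^\star,i^\star}$, which coincides with the statement of the lemma and fits seamlessly into the induction. For the strict claim under $\puiseuxA \in (\TP_{2,C})^{n\times n}$, all entries are positive, the $2\times 2$ inequality applied in the peeling step is strict, and multiplying through by the positive factor $\prod_{s\in I\setminus\{i^\star,j\}}\puiseuxA_{s,\gamma(s)}$ preserves strictness, so the final bound is strict.
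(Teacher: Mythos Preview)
Your proof is correct and follows essentially the same inductive peeling argument as the paper: remove one element from the cycle using a single $2\times 2$ inequality from the $\TN_{2,C}$ definition, then apply the inductive hypothesis to the shorter cycle. The only cosmetic differences are that the paper starts the induction at $|I|=2$ rather than $|I|=1$, and that it selects the element to remove via an inversion argument (choosing $i_k$ with $i_{k-1}<i_k$ and $\gamma(i_k)<i_k$) rather than simply taking $i^\star=\max I$ as you do; your choice is arguably cleaner since it makes both $j<i^\star$ and $l<i^\star$ automatic.
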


\begin{proof}
We show that~\eqref{cycleb} holds for all cyclic permutations
of a subset $I\subset [n]$ by induction on 
the number of elements $k$ of this subset.
The base of the induction, $|I|=2$, holds
by definition of $\TN_{2,C}^{n\times n}$.

Consider now a cycle $\gamma=(i_1\ \dots\ i_k)$. A cycle
must have an inversion,
and we may assume without loss of generality that this inversion is on the indices $i_{k-1},i_k$, so that $i_{k-1}<i_k$ and $i_k>i_1$.
Considering the $\{i_{k-1},i_k\}\times \{i_1,i_k\}$
submatrix of $\puiseuxA$, we observe that
\[
\puiseuxA_{i_{k-1}i_k}\puiseuxA_{i_ki_1}\leq \frac{1}{C}\puiseuxA_{i_{k-1}i_1}\puiseuxA_{i_ki_k}
\]
It follows that
\[
\puiseuxA_{i_1,i_2} \puiseuxA_{i_2,i_3} \cdots  \puiseuxA_{i_{k-2},i_{k-1}} {(\puiseuxA_{i_{k-1},i_k}
\puiseuxA_{i_k,i_1})} \leq 
\frac{1}{C} (\puiseuxA_{i_1,i_2} \puiseuxA_{i_2,i_3}\dots \puiseuxA_{i_{k-1},i_1}) \puiseuxA_{i_ki_k}
\]
Applying the induction hypothesis to the cycle  $(i_1,\ldots,i_{k-1})$, we
obtain~\eqref{cycleb}.


The case of strict inequalities is obtained in the same way, using the fact
that all entries of a matrix in $\TP_{2,C}$ are positive.
\end{proof}



Consider a matrix $\puiseuxF\in\K^{n\times n}$, and define
the {\em maximal cycle mean} of $\puiseuxF$,
\[
\rho_{\max}(\puiseuxF):=\sup_{i_1\dots i_k}
\big|\puiseuxF_{i_1,i_2}\dots\puiseuxF_{i_k,i_1} \big|^{\frac{1}{k}} \enspace,
\]
where the maximum is taken over all sequences $i_1,\dots,i_k$ of distinct
elements of $\{1,\dots,n\}$.
The following result
gives a sufficient condition for
the determinant of a matrix to have the same sign as the determinant
of its diagonal.

\begin{thm}\label{th-indep}
Suppose that $\puiseuxA\in \K^{n\times n}$
can be written as 
\begin{align*}
\puiseuxA= \puiseuxD+\puiseuxB
\end{align*}
where $\puiseuxD$ is a diagonal matrix with non-zero diagonal
entries, and $\puiseuxB$ has zero diagonal entries.
If \begin{align}
 \rho_{\max}(\puiseuxD^{-1}\puiseuxB)<  1/(n-1)
\enspace,\label{e-rho}
\end{align}
then
\begin{align}
(\det\puiseuxD)^{-1}\det\puiseuxA\geq \Big(1-(n-1) \rho_{\max}(\puiseuxD^{-1}\puiseuxB)\Big)^n >0 \enspace,\label{e-newinequality}
\end{align}
in particular, $\det \puiseuxD \det \puiseuxA>0$. 
Moreover, if we only have a weak inequality
in~\eqref{e-rho}, then
$\det \puiseuxD \det \puiseuxA \geq 0$.
\end{thm}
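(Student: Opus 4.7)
The plan is to factor $\puiseuxA = \puiseuxD(I + N)$ with $N := \puiseuxD^{-1}\puiseuxB$; since $\puiseuxB$ has zero diagonal, so does $N$, and $\rho_{\max}(N)$ equals $\rho := \rho_{\max}(\puiseuxD^{-1}\puiseuxB)$. Because $\det\puiseuxA = \det\puiseuxD \cdot \det(I + N)$, the theorem reduces to proving the scalar inequality $\det(I + N) \geq (1 - (n-1)\rho)^n$ under the hypothesis $\rho < 1/(n-1)$; the sign statement $\det\puiseuxD\det\puiseuxA > 0$ then follows at once.

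The heart of the argument is a two-step reduction to Gershgorin's circle theorem. First, I would perform a max-plus diagonal rescaling: given any $\rho' > \rho$ in $\K_{>0}$, I build a diagonal matrix $S = \diag(s_1, \dots, s_n)$ with $s_i \in \K_{>0}$ such that the conjugate $\tilde N := S^{-1} N S$ satisfies $|\tilde N_{ij}| \leq \rho'$ for all $i \neq j$. Concretely, set $P_{ij} := |N_{ij}|/\rho'$ for $i \neq j$ and $P_{ii} := 0$; the cycle-mean hypothesis forces every simple cycle product of $P$ to be bounded by $(\rho/\rho')^k < 1$, so the max-times Kleene star $P^{*} := I \oplus P \oplus \dots \oplus P^{\otimes(n-1)}$ stabilizes after $n-1$ iterations, since longer paths may be shortened by deleting cycles without increasing their product. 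Taking $s_i := \max_j (P^{*})_{ij} \geq 1$, the identity $P \otimes P^{*} \leq P^{*}$ immediately yields $P_{ij} s_j \leq s_i$, hence $|N_{ij}| s_j / s_i \leq \rho'$. Second, after this rescaling $I + \tilde N$ is strictly diagonally dominant in the classical sense, with diagonal $1$ and off-diagonal row sums $\leq (n-1)\rho' < 1$. Gershgorin's theorem then applies over $\K$ verbatim---its proof uses only the absolute value on $\K$ extended to its algebraically closed quadratic extension $\K[\sqrt{-1}]$---and gives $|\lambda| \geq 1 - (n-1)\rho'$ for every eigenvalue, whence $|\det(I+N)| = |\det(I+\tilde N)| \geq (1-(n-1)\rho')^n$.

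The positive sign is fixed by a homotopy: the polynomial $f(t) := \det(I + tN) \in \K[t]$ has $f(0) = 1 > 0$, and applying the magnitude bound to $tN$ (whose maximal cycle mean is $t\rho < 1/(n-1)$) guarantees $f(t) \neq 0$ on $[0, 1] \cap \K$; the intermediate value theorem, valid in the real closed field $\K$ by Tarski's transfer, then forces $f(1) > 0$. Letting $\rho' \searrow \rho$ using the order-continuity of polynomials in $\K$ yields the claimed inequality $\det(I+N) \geq (1-(n-1)\rho)^n$, and the weak-inequality case follows by applying the strict case to $(1-\epsilon)N$ for $\epsilon \in \K_{>0}$ and letting $\epsilon \searrow 0$ in the same order-continuous fashion. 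I expect the main obstacle to be the max-plus rescaling step, in particular verifying that the Kleene-star stabilization goes through and produces $S$ with entries in $\K_{>0}$ even though the entries of $N$ may have very disparate magnitudes in $\K$; the remainder is standard once one notes that both Gershgorin's theorem and the intermediate value theorem are first-order statements in the language of ordered fields and therefore transfer to $\K$, or to any real closed nonarchimedean field with value group $\R$, by Tarski's principle.
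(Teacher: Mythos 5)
Your proof is correct, but it follows a genuinely different route from the paper's. The paper factors $\puiseuxA=\puiseuxD(\puiseuxI+\puiseuxF)$ exactly as you do, but then invokes Friedland's inequality $\rho(\puiseuxF)\leq\rho_{\max}(\puiseuxF)\,\rho(\pat(\puiseuxF))$ (transferred to $\K$ by Tarski's completeness theorem); since $\puiseuxF$ has zero diagonal, $\rho(\pat(\puiseuxF))\leq n-1$, so every eigenvalue satisfies $|\puiseuxlambda_i|\leq(n-1)\rho_{\max}(\puiseuxF)<1$, and the bound and the sign come out simultaneously by writing $\det(\puiseuxI+\puiseuxF)=\prod_i(1+\puiseuxlambda_i)$ and pairing complex-conjugate eigenvalues: each real factor is $\geq 1-|\puiseuxlambda_i|$ and each conjugate pair contributes $(1+\puiseuxlambda)(1+\bar{\puiseuxlambda})\geq(1-|\puiseuxlambda|)^2$. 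You replace Friedland's theorem by an explicit max-times Kleene-star scaling plus Gershgorin, which is more elementary and self-contained (your scaling step is correct: $P\otimes P^*\leq P^*$ does give $P_{ij}s_j\leq s_i$ with $s_i=\max_j(P^*)_{ij}\geq 1$, and stabilization after $n-1$ terms holds because all cycle products of $P$ are $<1$); the price is that Gershgorin only controls $|\det(\puiseuxI+N)|$, forcing your separate homotopy/IVT argument for the sign, a step the paper's conjugate-pair bookkeeping renders unnecessary. Your limiting arguments ($\rho'\searrow\rho$, $\epsilon\searrow 0$) are legitimate in any real closed field since they reduce to order-continuity of polynomials, though in a nonarchimedean field they deserve the one-line justification you gesture at rather than topological language. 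It is worth noting that a scaling-to-diagonal-dominance proof very close to yours appears in a discarded draft passage of the source, where the authors also compare it with the Friedland route; your version buys independence from Friedland's result, while the paper's buys brevity and a sign argument for free.
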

\begin{proof}
A matrix $\puiseuxF\in \K^{n\times n}$ has $n$ eigenvalues in $\K[\sqrt{-1}]$,
counted with multiplicities, 
and we denote by $\rho(\puiseuxF)$ the spectral radius of $\puiseuxF$, i.e.,
the maximum of the moduli of these eigenvalues.
We denote by $\pat(\puiseuxF)$
the {\em Boolean pattern}
of the matrix $\puiseuxF$, so that the $(i,j)$-entry of
$\pat(\puiseuxF)$ is equal to $1$ if $\puiseuxF_{ij}\neq 0$,
and to $0$ otherwise. Let us recall the following inequality of Friedland~\cite{friedland}, relating the maximal cycle mean with the spectral radius:
\begin{align}
\rho(\puiseuxF) \leq \rho_{\max}(\puiseuxF) \rho(\pat(\puiseuxF)) \enspace .\label{e-shmuel}
\end{align}
Indeed, this inequality is established in~\cite{friedland} for matrices
with entries in $\R$. This property can be expressed
by a first order sentence in the language of ordered fields, and so,
by Tarski's completeness theorem mentioned in \Cref{subsec-nonarch}, 
it also holds for matrices with entries in $\K$.

Consider $\puiseuxF:=\puiseuxD^{-1}\puiseuxB$, so that $\puiseuxD^{-1}\puiseuxA= \puiseuxI +\puiseuxF$. 
Since $\puiseuxB_{ii}=0$, the matrix $\puiseuxP:=\pat (\puiseuxF)$,
which as $0/1$ entries, has at most $n-1$ entries equal to $1$ in each
row. Hence, $\rho(\puiseuxP)\leq \max_{i}\sum_j |\puiseuxP_{ij}|
\leq n-1$. Let $\puiseuxlambda_1,\dots,\puiseuxlambda_n\in
\K[\sqrt{-1}]$ denote the eigenvalues of $\puiseuxF$. 
We deduce
from~\eqref{e-shmuel} that if $ \rho_{\max}(\puiseuxF)<  1/(n-1)$, 
then $|\puiseuxlambda_i|\leq (n-1)\rho_{\max}(\puiseuxF)<  1$,
for all $1\leq i\leq n$.
Observe that 
\begin{align*}
(\det\puiseuxD)^{-1}\det\puiseuxA
&= \det (\puiseuxD^{-1}\puiseuxA)=
\prod_{1\leq i\leq n}(1+\puiseuxlambda_i)
\end{align*}
If $\puiseuxlambda_i$ is real, we have $1+\puiseuxlambda_i \geq 1-|\puiseuxlambda_i|\geq 1- (n-1)\rho_{\max}(\puiseuxF)>0$. Otherwise, we note that
both $\puiseuxlambda_i$
and its conjugate $\bar{\puiseuxlambda}_i$ are eigenvalues of $\puiseuxF$,
and observe that $(1+\puiseuxlambda_i)(1+\bar{\puiseuxlambda}_i)\geq (1-|\puiseuxlambda_i|)^2 \geq \big(1- (n-1)\rho_{\max}(\puiseuxF)\big)^2>0$.
Therefore, regrouping the eigenvalues 
by conjugate pairs, we deduce that
\begin{align}
\prod_{1\leq i\leq n}(1+\puiseuxlambda_i)
& 
\geq  
\Big(1-(n-1)\rho_{\max}(\puiseuxF)\Big)^n>0 \enspace,
\label{e-newineq}
\end{align}
which shows~\eqref{e-newinequality}.


If only the weak inequality holds in~\eqref{e-rho}, we can still
conclude that the weak inequality holds in~\eqref{e-newineq}.
\end{proof}

          \if{
The following result has appeared in several guises, it is
essentially one part of the tropical spectral theorem~\cite{BCOQ92,bcg},
and it is a special case of a non-linear Collatz-Wielandt theorem~\cite{nussbaum86}. The version we use is taken from~\cite{elsner}
\begin{lem}[Compare with~{\cite[Th.~7]{elsner}}]\label{lem-scaling}
For all $n\times n$ nonnegative matrices $\puiseuxB$, we have
\begin{align}
\rho_{\max}(\puiseuxB)= \inf\{\max_{i,j\in[n]}u_i^{-1}\puiseuxB_{ij}u_j\mid u\in \R^n_{>0}
 \}\enspace ,\label{e-scaling}
\end{align}
moreover, the infimum is attained if $\rho_{\max}(\puiseuxB)>0$.
\end{lem}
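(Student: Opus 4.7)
My plan is to prove the two inequalities in~\eqref{e-scaling} separately. For the inequality $\rho_{\max}(\puiseuxB) \leq \inf_u \max_{i,j} u_i^{-1}\puiseuxB_{ij}u_j$, I fix any $u \in \R^n_{>0}$ and any cyclic sequence of distinct indices $i_1,\dots,i_k$ (with the convention $i_{k+1}:=i_1$), and use the telescoping identity
\[
\puiseuxB_{i_1,i_2}\puiseuxB_{i_2,i_3}\cdots \puiseuxB_{i_k,i_1}
= \prod_{\ell=1}^k u_{i_\ell}^{-1}\puiseuxB_{i_\ell,i_{\ell+1}}u_{i_{\ell+1}}
\leq M^k,
\]
where $M := \max_{i,j} u_i^{-1}\puiseuxB_{ij}u_j$. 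Taking the $k$-th root and the supremum over cyclic sequences of distinct indices yields $\rho_{\max}(\puiseuxB) \leq M$, and passing to the infimum over $u$ gives the desired bound.

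For the reverse inequality together with the attainment statement, assume $\lambda := \rho_{\max}(\puiseuxB) > 0$ and appeal to the max-times Perron--Frobenius framework. Normalizing $\puiseuxB'_{ij} := \puiseuxB_{ij}/\lambda$, every cyclic product of $\puiseuxB'$ is at most $1$, so it suffices to exhibit $u \in \R^n_{>0}$ with $\puiseuxB'_{ij}u_j \leq u_i$ for all $i,j$. Define $\puiseuxC_{ij}$ to be the supremum, over directed paths from $i$ to $j$ in the pattern graph of $\puiseuxB'$, of the product of edge weights; the cycle bound ensures that an optimizing path may be chosen simple, so $\puiseuxC_{ij}$ is finite and is attained, and moreover $\puiseuxC_{i_0,i_0} = 1$ for every vertex $i_0$ lying on a cycle of maximum geometric mean $\lambda$. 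Fix such a critical $i_0$. Path concatenation gives $\puiseuxB'_{ij}\puiseuxC_{j,i_0} \leq \puiseuxC_{i,i_0}$, so $u_i := \puiseuxC_{i,i_0}$ satisfies $\puiseuxB' u \leq u$ coordinatewise and is strictly positive at every vertex from which $i_0$ is reachable. To cover vertices outside this reachability cone, one combines analogous columns of $\puiseuxC$ indexed by representatives of the other critical strongly connected components, rescaled by sufficiently small positive constants, and propagates upwards along the condensation of the pattern graph; this is the standard max-plus spectral construction (see e.g.~\cite{BCOQ92,butkovicbook}).

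For matrices $\puiseuxB$ with entries in $\K_{\geq 0}$ the same construction transfers verbatim, since it only involves $\max$, product, and quotient among nonnegative elements of an ordered field; alternatively, one can invoke the Tarski completeness theorem recalled in \Cref{subsec-nonarch}, as the existence of $u$ realizing $\max_{i,j}u_i^{-1}\puiseuxB_{ij}u_j = \lambda$ is first-order expressible in the language of ordered fields (after expressing $\lambda$ itself as the maximum cycle mean). The main obstacle will be ensuring global positivity of the scaling vector in the reducible case: a single critical column of $\puiseuxC$ vanishes at vertices that do not reach the chosen critical component, so one must carefully patch together contributions from all critical components and propagate the scaling through non-critical strongly connected components without breaking the inequality $\puiseuxB'u \leq u$.
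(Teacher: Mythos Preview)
Your argument is essentially correct but takes a different route from the paper. The paper does not reprove the equality~\eqref{e-scaling} at all: it cites~\cite{elsner} for that, and for the attainment statement it performs the change of variables $v_i=\log u_i$, recognizes
\[
\log \rho_{\max}(\puiseuxB)= \inf\{\lambda \mid \lambda\in \R,\; v\in \R^n,\; -v_i + \log \puiseuxB_{ij} + v_j \leq \lambda \;\forall i,j\}
\]
as the value of a linear program, and invokes the standard fact that a linear program with finite value has an optimal solution. This is a three-line proof, model-free, and it handles the reducible case with no extra work. Your construction via the max-times Kleene star is more self-contained (you do not need the external reference for the equality), and it is constructive, which the LP argument is not; on the other hand it is heavier, and as you yourself note, the reducible case needs care. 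Two loose ends are worth tightening. First, you never treat the case $\rho_{\max}(\puiseuxB)=0$ for the reverse inequality; here the positive-entry pattern is acyclic and one gets $\inf_u\max_{i,j}u_i^{-1}\puiseuxB_{ij}u_j=0$ by scaling along a topological order, but this should be said. Second, your patching recipe in the reducible case (``combine columns indexed by the other \emph{critical} strongly connected components'') is not quite enough: a vertex may fail to reach \emph{any} critical component, so critical columns alone can leave zero coordinates. The clean fix is to take $u_i=\max_j (\puiseuxB'^{*})_{ij}$ with $\puiseuxB'^{*}=I\vee \puiseuxB'\vee(\puiseuxB')^2\vee\cdots$; then $u_i\geq(\puiseuxB'^{*})_{ii}=1$ and $\puiseuxB'u\leq u$ holds without any case analysis.
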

\begin{proof}
The equality in~\eqref{e-scaling} appeared in~\cite{elsner}, without the mention of the case in which
the infimum is attained. By making the change of variables $v_i:=\log u_i$, 
we deduce that
\[
\log \rho_{\max}(\puiseuxB)= \inf\{\lambda \mid \lambda\in \R, v\in \R^n,\; -v_i + \log \puiseuxB_{ij} + v_j \leq \lambda ,\;\forall i,j\in [n]\} \enspace.
\]
The latter expression is the value of a linear program, which, by a general
property, has an optimal solution (meaning
that the infimum is attained) as soon as this value,~$\log\rho_{\max}(\puiseuxB)$
here, is finite.
So, the infimum in~\eqref{e-scaling} is attained as soon as~$\rho_{\max}(\puiseuxB)>0$.
\end{proof}

Let us recall that a matrix~$\puiseuxA$ with entries
in $\R$, or $\K$, is \textbf{diagonally dominant}
 if~$\puiseuxA_{i,i}\geq \sum_{j\ne i}|\puiseuxA_{i,j}|,\ \forall  i$. 
The matrix $\puiseuxA$ is  \textbf{strictly diagonally dominant} if the latter inequalities are strict.

A key ingredient of the proof of Theorem~\ref{TNc} is the
following result, which gives a 
sufficient condition for a matrix to be diagonally similar
to a matrix with a dominant diagonal. For $\puiseuxB\in\R^{n\times n}$,
we denote by $|\puiseuxB|$ the matrix with entries
$(|\puiseuxB_{ij}|)$.
\begin{pro}\label{prop-scaling}
Suppose that $\puiseuxA\in \R^{n\times n}$
can be written as 
\begin{align*}
\puiseuxA= \puiseuxD+\puiseuxB
\end{align*}
where $\puiseuxD$ has positive diagonal entries and $\puiseuxB$
has zero diagonal entries.
If \begin{align}
 \rho_{\max}(\puiseuxD^{-1}|\puiseuxB|)\leq  1/(n-1)
\enspace,\label{e-rho}
\end{align}
then  there exists a diagonal matrix $\delta$ with positive diagonal
entries such that $\delta^{-1} \puiseuxA\delta$ is diagonally
dominant. In particular, $\det \puiseuxA\geq 0$.
Moreover, if the inequality in~\eqref{e-rho}
is strict, the matrix
$\delta^{-1} \puiseuxA\delta$ can be required
to be strictly diagonally dominant,
and $\det \puiseuxA>0$.
\end{pro}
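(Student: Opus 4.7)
The plan is to reduce the claim to a spectral bound on the scaled matrix $\puiseuxF := \puiseuxD^{-1}\puiseuxB$, for which $\puiseuxD^{-1}\puiseuxA = \puiseuxI + \puiseuxF$ and therefore
\[
(\det\puiseuxD)^{-1}\det\puiseuxA \;=\; \det(\puiseuxI + \puiseuxF) \;=\; \prod_{i=1}^{n}(1+\puiseuxlambda_i)\enspace,
\]
where $\puiseuxlambda_1,\dots,\puiseuxlambda_n$ are the eigenvalues of $\puiseuxF$, living in the algebraic closure $\K[\sqrt{-1}]$ of the real closed field $\K$ (see \Cref{ex-genpuiseux}). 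The goal is then to bound each factor on the right from below by $1-(n-1)\rho_{\max}(\puiseuxF)$, and for this it suffices to show that every eigenvalue satisfies $|\puiseuxlambda_i| \leq (n-1)\rho_{\max}(\puiseuxF)$, which under the hypothesis is strictly smaller than $1$.

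To obtain this spectral bound, I would invoke Friedland's inequality $\rho(\puiseuxF) \leq \rho_{\max}(\puiseuxF)\cdot \rho(\pat(\puiseuxF))$. Since $\puiseuxB$ has zero diagonal, the pattern matrix $\pat(\puiseuxF)$ is a $0/1$ matrix with zero diagonal, so each row contains at most $n-1$ ones and the standard Perron row-sum bound yields $\rho(\pat(\puiseuxF)) \leq n-1$. Friedland's inequality is classically stated for real matrices; I plan to transfer it to the non-archimedean field $\K$ via Tarski's completeness theorem, as recalled in \Cref{subsec-nonarch}, after writing the inequality as a first-order sentence in the language of ordered fields (using the real/imaginary parts of the roots of the characteristic polynomial and their squared moduli, which are available since $\K$ is real closed).

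Having established $|\puiseuxlambda_i| \leq (n-1)\rho_{\max}(\puiseuxF) < 1$ for every $i$, I would finish factor by factor. For a real eigenvalue $\puiseuxlambda_i \in \K$ one has $1+\puiseuxlambda_i \geq 1-|\puiseuxlambda_i| \geq 1-(n-1)\rho_{\max}(\puiseuxF) > 0$. For each pair of genuinely complex conjugate eigenvalues $\puiseuxlambda_i, \bar{\puiseuxlambda}_i$ (both are roots of the characteristic polynomial because $\puiseuxF$ has entries in $\K$), one computes
\[
(1+\puiseuxlambda_i)(1+\bar{\puiseuxlambda}_i) \;=\; |1+\puiseuxlambda_i|^2 \;\geq\; (1-|\puiseuxlambda_i|)^2 \;\geq\; \bigl(1-(n-1)\rho_{\max}(\puiseuxF)\bigr)^2\enspace,
\]
which is again in $\K$ and positive. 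Regrouping the full product by conjugate pairs produces the required lower bound $\bigl(1-(n-1)\rho_{\max}(\puiseuxF)\bigr)^n$. For the case of a weak inequality in the hypothesis, the same chain of estimates with $<$ replaced everywhere by $\leq$ gives $(\det\puiseuxD)\cdot(\det\puiseuxA) \geq 0$.

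The main obstacle will be the transfer of Friedland's inequality from $\R$ to the non-archimedean real closed field $\K$. Concretely, I need to fix a consistent notion of modulus on $\K[\sqrt{-1}]$ (via $|a+\sqrt{-1}\,b| = \sqrt{a^2+b^2}\in\K$, as in \Cref{ex-genpuiseux}), interpret $\rho(\cdot)$ as the maximum such modulus of a root of the characteristic polynomial, and express both $\rho_{\max}(\cdot)$ and the row-sum spectral-radius bound for $\pat(\puiseuxF)$ in a single first-order sentence. Once this framework is set up, Tarski's completeness theorem does the work of transfer, and the remainder of the argument is elementary algebra in $\K[\sqrt{-1}]$.
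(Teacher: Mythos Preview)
Your argument establishes the determinant conclusions $\det\puiseuxA\geq 0$ (resp.\ $>0$), but it does not address the \emph{main} assertion of the proposition: the existence of a diagonal matrix $\delta$ with positive entries making $\delta^{-1}\puiseuxA\delta$ (strictly) diagonally dominant. Nothing in the eigenvalue factorization $\det(\puiseuxI+\puiseuxF)=\prod_i(1+\puiseuxlambda_i)$, nor in Friedland's inequality $\rho(\puiseuxF)\leq \rho_{\max}(\puiseuxF)\rho(\pat(\puiseuxF))$, produces such a $\delta$. What you have written is essentially the proof of \Cref{th-indep}, which is the determinantal statement only; the proposition at hand is strictly stronger.

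The paper's own proof goes through a different mechanism: a variational characterization of the maximal cycle mean,
\[
\rho_{\max}(\puiseuxD^{-1}|\puiseuxB|)=\inf_{u\in\R_{>0}^n}\ \max_{i,j}\ u_i^{-1}(\puiseuxD^{-1}|\puiseuxB|)_{ij}\,u_j\enspace,
\]
with the infimum attained when $\rho_{\max}>0$ (this is \Cref{lem-scaling}, a tropical spectral/Collatz--Wielandt identity). Taking $\delta=\diag(u)$ for an optimal $u$ and normalizing so that $\puiseuxD=\puiseuxI$, one gets $|(\delta^{-1}\puiseuxA\delta)_{ij}|\leq \rho_{\max}(|\puiseuxB|)\leq 1/(n-1)$ for $i\neq j$, whence $\sum_{j\neq i}|(\delta^{-1}\puiseuxA\delta)_{ij}|\leq 1=(\delta^{-1}\puiseuxA\delta)_{ii}$, i.e., diagonal dominance; the strict case and the case $\rho_{\max}=0$ are handled by the same device with an $\epsilon$ of room. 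Your spectral route cannot substitute for this step. A secondary point: the proposition is stated over $\R$, so the discussion of transferring Friedland's inequality to $\K$ via Tarski is unnecessary here (it would be relevant for \Cref{th-indep}), and your $\puiseuxF$ should be $\puiseuxD^{-1}|\puiseuxB|$ rather than $\puiseuxD^{-1}\puiseuxB$ to match the hypothesis.
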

\begin{proof}
Multiplying $\puiseuxA$ by $\puiseuxD^{-1}$ preserves
the diagonal dominance of $\puiseuxA$ and the sign of $\det \puiseuxA$,
hence, we may assume that the diagonal entries of $\puiseuxA$ are
equal to $1$.

Assume first that $\rho_{\max}(|\puiseuxB|)>0$. Then, the minimum
is attained in~\eqref{e-scaling} by some vector $u\in \R_{>0}^n$.
Setting~$\delta=\diag(u)$  and~$\tilde{\puiseuxA}=\delta^{-1}\puiseuxA\delta $, we get $\tilde{\puiseuxA}_{i,i}=1$, and 
\[ |\tilde{\puiseuxA}_{i,j}|
= u^{-1}_i|\puiseuxA_{i,j}|u_j\leq\rho_{\max}(|\puiseuxB|)\leq \frac{1}{n-1}
\]
It follows that $\sum_{j\neq i} |\tilde{\puiseuxA}_{ij}|\leq 1$, showing that
$\tilde{\puiseuxA}$ has a dominant diagonal.

Finally, if $\rho_{\max}(|\puiseuxB|)=0$, for all $\lambda>0$,
by~\eqref{e-scaling}, we can find a vector $u\in \R_{>0}^n$ such that 
$u^{-1}_i|\puiseuxA_{i,j}|u_j\leq \lambda$, for all $i,j\in [n]$.
Choosing any $\lambda\leq 1/(n-1)$, we conclude as above
that $\tilde{\puiseuxA}$ is diagonally dominant.
We also conclude that if $\rho_{\max}(|\puiseuxB|)< 1/(n-1)$, then
$\tilde{\puiseuxA}$ is strictly diagonally dominant.

It is known that a strictly diagonally dominant matrix $\tilde{A}$
satisfies $\det \puiseuxA>0$, this appears in particular
as the implication ``$M_{35}\Rightarrow A_1$''
in Theorem~2.3 of~\cite{berman}. Moreover, a immediate perturbation argument
implies that a diagonally dominant matrix $\tilde{A}$
satisfies $\det \puiseuxA\geq 0$,
(replace $\puiseuxA$ by $\puiseuxA+s I$, where $I$ is the identity matrix,
and let $s$ tend to $0^+$).
                  \end{proof}}\fi

\begin{proof}[Proof of Theorem~\ref{TNc}]
We need to show that every minor of $\puiseuxA$ is nonnegative (or positive).
Hence, possibly after replacing $\puiseuxA$ by a submatrix, 
we may assume that $n=m$. We also assume without loss of generality
that~$\puiseuxA_{i,i}=1$ for all $i\in [n]$. 
We set~$\puiseuxA=\puiseuxB+\puiseuxI$, where~$\puiseuxI$ is the identity matrix, and $\puiseuxB$ is the off-diagonal part of $\puiseuxA$
meaning that $\puiseuxB_{i,j}= \puiseuxA_{ij}$
if $i\neq j$ and $\puiseuxB_{i,i}=0$. 
Lemma~\ref{cyclebound} yields the following bound
on the maximal cycle mean of $\puiseuxB$,
\begin{align}\label{e-boundrho}
\rho_{\max}(\puiseuxB)\leq\max_{2\leq k\leq n}\frac{1}{(C^{k-1})^{\frac{1}{k}}}
= \frac{1}{C^{\frac{1}{2}}} \enspace .
\end{align}

If  $C\geq(n-1)^2$, $\rho_{\max}(\puiseuxB)\leq 1/(n-1)$, and we deduce
from \Cref{th-indep} that $\det \puiseuxA\geq 0$.
Similarly, if $\puiseuxA\in \TP_{2,C}$, we deduce along the
same lines that $\det(\puiseuxA)> 0$.
\end{proof}
\begin{rem}
The statement of Theorem~\ref{TNc} is inspired by 
an analogous result for {\em symmetric} matrices
which has appeared in~\cite{issac2016}.
It is shown there that if $\puiseuxA$ is an $n\times n$
symmetric matrix with nonnegative diagonal entries, such that
$\puiseuxA_{i,i}\puiseuxA_{j,j}\geq 
(n-1)^2\puiseuxA_{i,j}^2$ for all $i<j$,
then, $\puiseuxA$ is positive semidefinite. 
\end{rem}
\if{
\begin{rem}\label{rk-friedland}
An alternative approach to \Cref{prop-scaling} is the following condition,
which involves the spectral radius of a nonnegative matrix,
$\rho(\cdot)$, instead of the maximal geometric mean $\rho_{\max}(\cdot)$:
{\bf Claim}.\/ {\em Assume that $\puiseuxA= \puiseuxD+\puiseuxB$ where $\puiseuxD$ has positive diagonal entries, and $\rho(\puiseuxD^{-1}|\puiseuxB|)<1$. Then, there exists a diagonal
matrix $\delta$ with positive diagonal entries such that $\delta^{-1} \puiseuxA\delta$ is strictly diagonally dominant}.
To show this, we proceed as in the
proof of \Cref{prop-scaling}, but, instead of using~\Cref{lem-scaling}, we
use the classical Collatz-Wielandt result (see e.g.~\cite{nussbaum86}), 
showing that for all nonnegative matrices~$\puiseuxF\in \R^{n\times n}$,
\begin{align}
\rho(\puiseuxF)= \inf \{ \lambda>0\mid \exists u\in \R_{>0}^n,\; \puiseuxFu\leq \lambda u\} \enspace .\label{e-cwnew}
\end{align}
If $\rho(\puiseuxD^{-1}|\puiseuxB|)<1$ we 
deduce that there is $0<\lambda<1$ and
$u\in \R_{>0}^n,\; \puiseuxD^{-1}|\puiseuxB|u\leq \lambda u$,
and setting $\delta =\operatorname{diag}(u)$, we conclude
that $\delta^{-1}\puiseuxA\delta$ is strictly diagonal dominant,
which shows the claim.
Moreover, Friedland showed in~\cite{friedland} that for a nonnegative matrix~$\puiseuxF$,
\[ \rho(\puiseuxF)\leq \rho_{\max}(\puiseuxF) \rho(\pat(\puiseuxF))\enspace,
\]
where $\pat(\cdot)$ denotes the Boolean pattern of a matrix,
i.e., the $0/1$ matrix with $(i,j)$ entry equal to $1$ whenever $\puiseuxF_{ij}>0$. Applying this result to the situation of \Cref{prop-scaling}
in which $\puiseuxB$ has $0$ diagonal entries,
so that $\rho(\pat(\puiseuxD^{-1}|\puiseuxB|))\leq n-1$, 
we deduce that $\rho(|\puiseuxB|)\leq \rho_{\max}(\puiseuxB)(n-1)$,
so that the condition that $\rho(\puiseuxD^{-1}|\puiseuxB|)<1$ in the above claim is milder than the condition $\rho_{\max}(\puiseuxD^{-1}|\puiseuxB|)<1/(n-1)$ in \Cref{prop-scaling}. 
However, this alternative approach is less satisfactory in the case of weak
inequalities: unlike in \Cref{lem-scaling}, the infimum may not be attained
in \Cref{e-cwnew} even if $\rho(|\puiseuxF|)>0$. In fact,
the condition that $\rho(|\puiseuxB|)\leq 1$ does not imply that $\puiseuxA$
can be reduced to a (non-strictly) diagonally dominant matrix (take $\puiseuxA = \left(\begin{smallmatrix} 1 & 1\\0 & 1\end{smallmatrix}\right)$).
\end{rem}}\fi
We now introduce the tropical notion of diagonal dominance,
following the terminology of~\cite[\S3]{MA}:
a matrix~$A\in\R_{\max}^{n\times n}$ is  \textbf{tropical diagonally dominant}
if $\per(A)=\sum_{i\in[n]} A_{i,i},$ and it is \textbf{tropical strictly diagonally dominant} if $\per(A) >\sum_{i\in [n]}A_{i,\sigma(i)}$ holds for all
permutation $\sigma$ different from the identity. 
We denote by~$\DD$ the set of matrices such that every 
submatrix is tropical diagonally dominant, and by~$\NDD$ 
the set of matrices such that every submatrix is tropical strictly
diagonally dominant.

\begin{pro} \label{DD}
$\ \TN^\trop\subset\DD\ $ and $\ \TP^\trop\subset\NDD$.
\end{pro}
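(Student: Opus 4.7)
The plan is to reduce an arbitrary permutation of a submatrix to the identity by resolving inversions one at a time, using the Monge (respectively strict Monge) property guaranteed by \Cref{trop2}.

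Let $A\in\TN^\trop$, and fix a square submatrix $B=A_{I,J}$ with $I=\{i_1<\cdots<i_k\}$ and $J=\{j_1<\cdots<j_k\}$. Let $\pi$ be any permutation of $[k]$. I want to show that $\sum_{\ell} B_{\ell,\pi(\ell)} \leq \sum_{\ell} B_{\ell,\ell}$. If $\pi\neq \Id$, by \Cref{inv} there exist indices $a<b$ with $\pi(a)>\pi(b)$. Applying the tropical nonnegativity of the $2\times 2$ minor on rows $\{a,b\}$ and columns $\{\pi(b),\pi(a)\}$ (using $\TN^\trop=\TN^\trop_2$ from \Cref{trop2}), we obtain the Monge inequality
\[
B_{a,\pi(b)}+B_{b,\pi(a)} \;\geq\; B_{a,\pi(a)}+B_{b,\pi(b)} \enspace.
\]
Hence the permutation $\pi'$ obtained from $\pi$ by swapping the images of $a$ and $b$ has at least the same weight as $\pi$, and has strictly fewer inversions. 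Iterating this procedure terminates at the identity and yields $\sum_\ell B_{\ell,\pi(\ell)} \leq \sum_\ell B_{\ell,\ell}$. Since this holds for every $\pi$, we have $\per(B)=\sum_\ell B_{\ell,\ell}$, so $B$ is tropical diagonally dominant. As the submatrix $B$ was arbitrary, $A\in\DD$.

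For the strict inclusion $\TP^\trop\subset\NDD$, the same argument applies, with $A\in\TP^\trop=\TP^\trop_2$ (again by \Cref{trop2}) giving the \emph{strict} Monge inequality at each step. Thus each inversion-resolution step strictly increases the weight, and for $\pi\neq \Id$ we conclude $\sum_\ell B_{\ell,\pi(\ell)} < \sum_\ell B_{\ell,\ell}$. This gives strict tropical diagonal dominance for every square submatrix, i.e.\ $A\in\NDD$.

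I do not expect any serious obstacle: the whole argument is a tropical version of the standard bubble-sort proof that Monge matrices have the identity as an optimal assignment, and \Cref{trop2} reduces the global hypothesis on all minors to the $2\times 2$ hypothesis that directly feeds the induction on the number of inversions.
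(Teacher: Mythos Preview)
Your proof is correct. The core mechanism---resolve an inversion of a non-identity permutation using a single $2\times 2$ Monge inequality, then iterate---is the same one that underlies the paper's argument, but the packaging is different. The paper does not argue directly in $\rmax$: it takes the canonical lift $t^{A}$ into $\K$, notes that this lift lies in $\TN_{2,1}(\K)$, and then invokes \Cref{cyclebound} (the cycle-mean bound, itself proved by inversion-resolution inside a cycle) to obtain $\prod_{j\in I} t^{A_{j,\gamma(j)}}\leq \prod_{j\in I} t^{A_{j,j}}$ for every cycle $\gamma$, from which diagonal dominance follows after decomposing a permutation into cycles. Your version stays entirely in the tropical semiring and runs the classical bubble-sort proof that the identity is an optimal assignment for a Monge array; this is a bit more self-contained and avoids the nonarchimedean detour. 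The paper's route, on the other hand, gets the result essentially for free once \Cref{cyclebound} is in place (and that lemma is needed anyway for \Cref{TNc}), so neither approach is strictly superior.
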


\begin{proof} 
We show that if $A\in \TN^\trop$ is $n\times n$, then  the identity
permutation attains the permanent of $A$. 
We have trivially $\TN^\trop\subset \TN^{\trop}_2$, and we observed
in the proof of \Cref{2trop2} that if $A\in\TN^{\trop}_2$,
then  the canonical lift $(t^{A_{ij}})$ belongs to $\TN_2(\K)$.
The latter set coincides with $\TN_{2,C}(\K)$,
for $C=1$.  
It follows from \Cref{cyclebound} that for all cycle
$\gamma$ with set of elements $I$,
\begin{align}
\label{e-interm}
\prod_{j\in I}t^{A_{j,\gamma(j)}}
\leq \prod_{j\in I}t^{A_{j,j}}\enspace .
\end{align}
Thus
the weight of any permutation of $A$ is dominated
by the weight of the identity permutation.

The case in which $A\in \TP^\trop$ follows by noting
that the inequalities~\eqref{e-interm} are strict.

\end{proof}


It follows from \Cref{subsequent0} that 
a real matrix~$A$ is 
in~$\TN^{\trop}(\R)$ (resp.~in~$\TP^{\trop}$) if and 
only if its~$2\times 2$ solid minors are 
tropically nonnegative (resp.~tropically positive).
\if{
\begin{lem}\label{subsequent} 
 \end{lem}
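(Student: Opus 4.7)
The plan is to prove this characterization by directly chaining together the two main ingredients already at our disposal: \Cref{trop2}, which collapses total tropical (non)negativity to the corresponding property of the $2\times 2$ minors, and \Cref{subsequent0}, which reduces the Monge / strict Monge condition to the inequalities among consecutive row and column indices. So the proof is essentially a two-line combination of results.

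First I would handle the $\TN^\trop(\R)$ statement. One direction is trivial: if $A\in\TN^\trop(\R)$, then every minor, in particular every $2\times 2$ solid minor, is tropically nonnegative. For the converse, suppose all $2\times 2$ solid minors of $A\in\R^{n\times m}$ are tropically nonnegative. By the definition recalled just before \Cref{subsequent0}, this is precisely the Monge inequality $A_{i,j}+A_{i+1,j+1}\geq A_{i,j+1}+A_{i+1,j}$ holding for all consecutive indices. Applying \Cref{subsequent0} upgrades this to the full Monge property for all $i<i'$ and $j<j'$, which by definition means $A\in\TN^\trop_2(\R)$. Finally, \Cref{trop2} gives $\TN^\trop_2=\TN^\trop$ (restricted to matrices with real entries, this reads $\TN^\trop_2(\R)=\TN^\trop(\R)$), so $A\in\TN^\trop(\R)$.

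Second, the $\TP^\trop$ statement is obtained along the same lines but with strict inequalities throughout: tropical positivity of every $2\times 2$ solid minor is the strict Monge condition on consecutive indices, which by \Cref{subsequent0} extends to the strict Monge condition for all $i<i'$ and $j<j'$, i.e.\ membership in $\TP^\trop_2$; then \Cref{trop2} yields $A\in\TP^\trop$. Note that since $\TP^\trop$ requires finite entries by definition, the hypothesis that the $2\times 2$ solid minors are tropically positive (in particular finite) ensures that $A\in\R^{n\times m}$, so the conclusion is consistent.

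There is really no substantial obstacle: the work has already been done in \Cref{trop2} and \Cref{subsequent0}, and this lemma just records their combination. The only mild point worth flagging in the writeup is that the definition of tropical positivity/nonnegativity of a $2\times 2$ minor matches exactly the (strict) Monge inequality, so the translation between ``$2\times 2$ solid minors are tropically (non)negative'' and ``consecutive-index (strict) Monge inequalities hold'' is an unpacking of definitions rather than a genuine step.
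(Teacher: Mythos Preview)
Your argument is correct. You combine \Cref{subsequent0} (reducing the full Monge/strict Monge property to consecutive indices) with \Cref{trop2} (collapsing $\TN^{\trop}$ and $\TP^{\trop}$ to their $2\times 2$ versions), and this is exactly how the paper itself phrases the conclusion in the surrounding text.

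The paper's own proof of this lemma, however, does \emph{not} invoke \Cref{subsequent0}; it reproves that step directly. It fixes consecutive rows $i,i+1$ and shows by induction on $t\geq 1$ that the $\{i,i+1\}\times\{j,j+t\}$ minor is tropically nonnegative, chaining the inequality for columns $\{j,j+k\}$ with the one for $\{j+k,j+k+1\}$ and cancelling the common entry $A_{i+1,j+k}$; it then repeats the induction on the row gap. Your route is shorter and cleaner precisely because that double induction has already been packaged as \Cref{subsequent0}. The paper's direct argument has the minor advantage of being self-contained (no appeal to the Burkard--Klinz--Rudolf result), but nothing is gained mathematically; both approaches establish the same intermediate fact, namely $\TN^{\trop}_2(\R)$ (resp.\ $\TP^{\trop}_2$) from the solid $2\times 2$ hypothesis, after which \Cref{trop2} finishes.
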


\begin{proof} The ``only if'' part of the lemma is clear.
Suppose $A\in \TN^\trop$.
We consider consecutive rows~$i,i+1$, and show,
by induction on $t\geq 1$, that the $\{i,i+1\}\times \{j,j+t\}$
minor of $A$ is tropically nonnegative. The base case $t=1$
is valid by our assumption. 
Assuming that the property holds for~$t=k$, we get, 
$$a_{i,j}\odot \underline{a_{i+1,j+k}}\odot a_{i+1,j+k+1}\geq a_{i+1,j} 
\odot a_{i,j+k}\odot  a_{i+1,j+k+1}\geq  a_{i+1,j}\odot a_{i,j+k+1}\odot 
\underline{a_{i+1,j+k}}\enspace ,$$
and canceling the underlined terms, we get
$\ a_{i,j}\odot a_{i+1,j+k+1}\geq a_{i,j+k+1}\odot a_{i+1,j},$
which shows that the property holds 
for~$t=k+1$.
Moreover, we deduce, by a similar induction on $s\geq 1$, that
the $\{i,i+s\}\times \{j,j+t\}$
minor of $A$ is tropically nonnegative.

The proof in the case of ~$A\in\TP^\trop$ is an immediate variant,
replacing weak inequalities by strict ones.
\end{proof}}\fi
Therefore,
for an $n\times m$ matrix 
to be tropical totally nonnegative (resp.~positive)
with real entries, it suffices that the following $nm$ conditions
hold:
each of the entries of the first row and first column
is real and all the~$2\times 2$ solid submatrices satisfy the
Monge property (resp.~strict Monge property). 
Note that the conclusion of \Cref{subsequent0} does not carry
over to the situation in which~$A$ has~$\zero$ entries, as seen for 
instance by
$A=\left(\begin{smallmatrix}2&\minusinfty&2\\2&\minusinfty&0\end{smallmatrix}\right).$

Classically, there are~$nm$ sufficient conditions for an~$n\times m$  
matrix to be totally positive, given by the positivity of its \textit{initial minors} (solid and bordering either the left or the top edge of the matrix).  
In the following proposition, we provide the tropical analogue of this property.
\begin{pro}\label{initial} Let~$A\in\R^{n\times m}$. The following are equivalent:
\begin{enumerate}
\item\label{m1} The solid~$2\times 2$ tropical minors of~$A$
 are tropically positive,
\item \label{m2} The tropical initial  minors of~$A$ are tropically positive,
\item \label{m3} $A\in\TP^\trop$.
\end{enumerate} 
\end{pro}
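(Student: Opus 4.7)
The plan is to verify (3) $\Rightarrow$ (1), (3) $\Rightarrow$ (2), and (1) $\Rightarrow$ (3) by inspection, and then to establish the main direction (2) $\Rightarrow$ (3) by lifting to $\K$ and invoking the classical initial-minors test.

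For the easy directions: (3) $\Rightarrow$ (1) and (3) $\Rightarrow$ (2) hold because solid $2\times 2$ minors and initial minors are minors of $A$. The implication (1) $\Rightarrow$ (3) is precisely the statement, recorded just before \Cref{initial}, that a real matrix belongs to $\TP^\trop$ iff all of its $2\times 2$ solid minors are tropically positive; this in turn is a consequence of \Cref{trop2} together with the strict analogue of \Cref{subsequent0}.

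The core of the proof is (2) $\Rightarrow$ (3). I would pass to the canonical lift $\puiseuxA \in \K^{n\times m}$ defined by $\puiseuxA_{i,j} := t^{A_{i,j}}$. For each initial submatrix $A_{I,J}$, the hypothesis that $\per(A_{I,J})$ is tropically positive means that every maximum-weight permutation $\sigma : I \to J$ is even. Hence, in the expansion $\det(\puiseuxA_{I,J}) = \sum_\sigma \sign(\sigma)\, t^{\sum_{i\in I} A_{i,\sigma(i)}}$, the coefficient of the leading term $t^{\per(A_{I,J})}$ equals the positive integer $\#\{\sigma : \sum_{i\in I}A_{i,\sigma(i)} = \per(A_{I,J})\}$, so $\det(\puiseuxA_{I,J}) > 0$ in $\K$. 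Thus every initial minor of $\puiseuxA$ is positive in $\K$, and the classical initial-minors test of Gasca--Pe\~na~\cite{Mariano&Pena}, being first-order in the language of ordered fields, transfers from $\R$ to $\K$ by Tarski's completeness theorem (cf.\ \Cref{subsec-nonarch}); we conclude $\puiseuxA \in \TP(\K)$.

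Finally, every $2\times 2$ minor of $\puiseuxA$ being positive unfolds to $t^{A_{i,j}+A_{i',j'}} - t^{A_{i,j'}+A_{i',j}} > 0$ for all $i < i'$, $j < j'$. Since this is a binomial whose two monomials carry opposite signs, positivity (in the leading-exponent convention of $\K$) is equivalent to $A_{i,j}+A_{i',j'} > A_{i,j'}+A_{i',j}$. Hence every $2\times 2$ tropical minor of $A$ is tropically positive, and \Cref{trop2} delivers $A \in \TP^\trop$, which is (3). The only genuinely non-trivial step is the invocation of Gasca--Pe\~na over $\K$, which Tarski handles; every remaining step reduces to reading off the leading term of an explicit monomial sum, aided by the fact that a $2\times 2$ minor is a difference of only two monomials, so there is no leading-order cancellation beyond the trivial degenerate case.
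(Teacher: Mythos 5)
Your proposal is correct and follows essentially the same route as the paper: both reduce the nontrivial implication to lifting $A$ canonically to $\K$, observing that tropically positive initial minors force positive initial minors of the lift, invoking the Gasca--Pe\~na initial-minor criterion over $\K$, and then reading off the strict Monge inequalities from the positive $2\times 2$ minors of the lift. Your explicit appeal to Tarski's transfer principle to justify Gasca--Pe\~na over $\K$ is a point the paper leaves implicit, but it is the same argument.
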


\begin{proof}
We already proved that \eqref{m3} and~\eqref{m1} are equivalent
(Lemma~\ref{subsequent0}). Moreover, \eqref{m3} trivially implies \eqref{m2}.


We show that~\eqref{m2} implies~\eqref{m1}.  
If the tropical initial minors of~$A$ are tropically positive,
then the initial minors of any lift of~$A$ are positive.
Thus, any lift $\puiseuxA$ of~$A$ is in~$\TP$, 
and in particular, the canonical lift $\puiseuxA:=(t^{A_{ij}})$ is in $\TP$.
It follows that every $2\times 2$ minor of $\puiseuxA$ is positive, 
which implies, since the lift is canonical,
that every $2\times 2$ minor of $A$ is tropically positive.
\end{proof}

As in the classical case, $A$ is not necessarily in $\TN^\trop$, if we allow (sign-)singular initial minors. See for instance the matrix  
$\left(\begin{smallmatrix} 
 1 & 1 & 1\\
 1 & 1 & 3\\
 2 & 2 & 1\\\end{smallmatrix}\right),$ which has classically and tropically nonnegative initial minors,
however its $2\times 2$ bottom-right minor is classically and tropically negative.



\begin{pro}\label{valuesubsequent} A  tropical totally nonnegative 
real matrix is uniquely determined by the values of its~$2\times 2$ 
solid minors and by the entries on its first row and first column.

\end{pro}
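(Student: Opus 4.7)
The plan is to show that each off-first-row-and-column entry of $A$ is forced by a single $2\times 2$ solid minor together with an already known entry, and to propagate this by a simple induction.

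Let $A\in\TN^\trop(\R)$ and denote by
\[
P_{i,j} \coloneqq \per\!\bigl(A_{\{i,i+1\},\{j,j+1\}}\bigr)
= \max\bigl(A_{i,j}+A_{i+1,j+1},\; A_{i,j+1}+A_{i+1,j}\bigr)
\]
the value of the solid $2\times 2$ minor on rows $i,i+1$ and columns $j,j+1$. By \Cref{trop2} we have $\TN^\trop(\R)=\TN^\trop_2(\R)$, so $A$ is a Monge matrix; in particular
\[
A_{i,j}+A_{i+1,j+1}\;\geq\; A_{i,j+1}+A_{i+1,j},
\]
which identifies the leading (diagonal) term and yields $P_{i,j}=A_{i,j}+A_{i+1,j+1}$. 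Since all entries are real, this relation may be rewritten as the recurrence
\[
A_{i+1,j+1} \;=\; P_{i,j} - A_{i,j}.
\]

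I would then conclude by induction on $i+j$. The base case is $i=1$ or $j=1$, where the entries are part of the given data. For the inductive step, assume $A_{i,j}$ is known for some $i\geq 1$, $j\geq 1$; then the recurrence above determines $A_{i+1,j+1}$ from the known datum $A_{i,j}$ and the prescribed solid minor value $P_{i,j}$. Iterating along anti-diagonals fills in the whole matrix, so any two matrices in $\TN^\trop(\R)$ agreeing on the first row, the first column, and on all solid $2\times 2$ minors must coincide.

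There is no real obstacle here beyond the observation that the Monge inequality forces the tropical permanent of every solid $2\times 2$ block to be attained by the main diagonal of that block; this is exactly what is needed to invert the permanent into an explicit formula for the ``new'' entry $A_{i+1,j+1}$. Finiteness of all entries (entries in $\R$, not $\rmax$) is what makes the subtraction legitimate, and this is built into the hypothesis that $A$ is a real matrix.
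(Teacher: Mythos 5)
Your proof is correct and follows essentially the same route as the paper's: the Monge property forces the tropical permanent of each solid $2\times 2$ block to be attained on its main diagonal, giving $A_{i+1,j+1}=P_{i,j}-A_{i,j}$, and the first row and column seed the induction. No gaps.
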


\begin{proof} 
 Denote  by~$M_{i,j}$ the permanent of the~$2\times 2$ 
submatrix  of consecutive rows~$\{i,i+1\}$ and consecutive columns~$\{j,j+1\}$, 
of the matrix~$A\in \TN^\trop$. We have 
$ A_{i-1,j-1} \odot A_{i,j} = M_{i-1,j-1}$, 
or $A_{i,j}=A_{i-1,j-1}^{\odot -1}\odot M_{i-1,j-1}$.
This implies, after an immediate induction, that $A$ is well determined by the value of the $M_{ij}$, and by its first row and column.

\end{proof}



\section{Double echelon and staircase forms}\label{desm} 
In this section, we describe the ``shape'' of tropical totally nonnegative matrices. We first consider the $\zero$/non-$\zero$ pattern,
showing that, as in the classical case (\cite{Fallat}, \cite[\S1.6]{Fallat&Johnson}, the tropical matrices which have non-$\zero$
rows and columns, have a double echelon form.
Next, we characterize the set of tropical totally nonnegative
matrices with {\em finite} entries. The latter constitute
a polyhedron, which coincides with the set of (anti)-Monge matrices.
Then, we deduce from a known characterization of 
Monge matrices~\cite[\S~2]{BKR},\cite{FIEDLER}
that every tropical totally nonnegative real
matrix is amenable to a certain ``staircase form''
by diagonal scaling.



\subsection{Double echelon form}
Recall that a \textit{Boolean matrix} is a 
matrix all of whose entries  are~$0$ and~$1$, and that the \textit{Boolean pattern} of a matrix~$A$ over a field, already considered in the proof
of \Cref{th-indep} is the Boolean matrix~$B$ such 
that~$B_{i,j}=1$ if~$A_{i,j}\ne 0$, and~$0$ otherwise.
\begin{df} \label{DED}
An~$n\times m$ Boolean  matrix  has a \textbf{double echelon pattern} if
\begin{enumerate}
\item\label{cond1} Every row is in one of the following forms
\begin{enumerate}
\item $(1,\dots,1)\enspace ,$
\item $(1,\dots,1,0,\dots,0)\enspace,$
\item $(0,\dots,0,1,\dots,1,0,\dots,0)\enspace,$
\item $(0,\dots,0,1,\dots,1)\enspace,$
\end{enumerate}

\item\label{cond2} The first and last nonzero entries in row~$i$ 
appear not to the left of  
the  first and last nonzero entries in row~$i-1$ 
respectively, $\forall i=2,\dots,n.$
\end{enumerate}
A  matrix over~$\R$ or~$\K$ is in \textbf{double echelon form} if it has 
a double echelon Boolean pattern.
A  matrix over~$\rmax$ is in {double echelon form} if its lifts have a 
double echelon Boolean pattern.
\end{df}

\begin{exa}\label{DEE} Every lift   of
$$E=\left(\begin{array}{ccccc}
1&1&3&\minusinfty&\minusinfty\\ 
\minusinfty&2&1&2&\minusinfty\\
\minusinfty&1&1&1&3\\
 \minusinfty&\minusinfty&1&1&1
\end{array}\right)$$
 have the double echelon  pattern
$$\left(\begin{array}{ccccc}
1&1&1&0&0\\ 
0&1&1&1&0\\
0&1&1&1&1\\
 0&0&1&1&1
\end{array}\right).$$
\end{exa}

\begin{pro}\label{TNDE} A tropical totally nonnegative  matrix~$A$  
with no $\zero$  row or column is in double echelon form. \end{pro}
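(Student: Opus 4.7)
The plan is to apply \Cref{trop2}, which tells us that the hypothesis $A\in\TN^{\trop}$ is equivalent to: every $2\times 2$ tropical minor of $A$ is tropically nonnegative. From this I would derive both parts (\ref{cond1}) and (\ref{cond2}) of \Cref{DED} by exhibiting, in each case of failure, a $2\times 2$ submatrix whose unique maximum-weight permutation is the odd transposition, hence a tropically negative minor.

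For condition (\ref{cond1}), suppose row $i$ has finite entries at columns $j_1<j_3$ but $A_{i,j_2}=\zero$ for some $j_1<j_2<j_3$. Since column $j_2$ is not a $\zero$ column, there exists $i'\neq i$ with $A_{i',j_2}$ finite. If $i'>i$, look at the submatrix on rows $\{i,i'\}$ and columns $\{j_2,j_3\}$: the diagonal weight $A_{i,j_2}\odot A_{i',j_3}$ equals $\zero$, while the anti-diagonal weight $A_{i,j_3}\odot A_{i',j_2}$ is finite, so the odd transposition is the unique maximizer and the minor is tropically negative, contradicting $A\in\TN^{\trop}$. If $i'<i$, the symmetric $\{i',i\}\times\{j_1,j_2\}$ submatrix supplies the contradiction in the same way, since $A_{i,j_2}\odot A_{i',j_1}=\zero$ while $A_{i',j_2}\odot A_{i,j_1}$ is finite.

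For condition (\ref{cond2}), let $f(i)$ and $l(i)$ denote respectively the smallest and largest column indices for which row $i$ has a finite entry; these exist because no row of $A$ is a $\zero$ row. If $f(i)<f(i-1)$, then $A_{i-1,f(i)}=\zero$ (by definition of $f(i-1)$) while $A_{i,f(i)}$ and $A_{i-1,f(i-1)}$ are finite, so on the $\{i-1,i\}\times\{f(i),f(i-1)\}$ submatrix the diagonal weight is $\zero$ and the anti-diagonal weight is finite, giving a tropically negative minor. A symmetric argument with the $\{i-1,i\}\times\{l(i),l(i-1)\}$ submatrix rules out $l(i)<l(i-1)$.

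There is essentially no obstacle here beyond bookkeeping: the ``sign-singular escape hatch'' in the definition of tropical nonnegativity is closed off in each configuration, because one permutation has weight $\zero$ and the other is strictly finite, so the maximum is attained uniquely by the odd one. Once this is observed, conditions (\ref{cond1}) and (\ref{cond2}) follow immediately from the $2\times 2$ analysis above.
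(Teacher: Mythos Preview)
Your proof is correct and follows essentially the same approach as the paper's: both argue by contrapositive, producing a tropically negative $2\times 2$ minor whenever either defining condition of the double echelon form fails. Your case split on $i'>i$ versus $i'<i$ in the analysis of condition~(\ref{cond1}) is in fact slightly more careful than the paper's argument, which names only the $\{i,j\}\times\{k,\ell\}$ submatrix without distinguishing on which side of row $i$ the witness row lies.
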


\begin{proof} 
Suppose first, by contradiction, that $A$ does not satisfy condition~\eqref{cond1}. Then there 
exists~$ i\in[n]$ such that~row~$A_i$ does not have a Boolean pattern as in~(a)--(d). That 
is, there exist~$ t<k<\ell$ such that  $A_{i,t}\ne\zero,\ A_{i,k}=\zero$ and $A_{i,\ell}\ne\zero$.  Since $A$ does not include zero columns, there exists row $j$ such that $A_{j,k}\ne\zero$, and therefore
the $\{i,j\}\times\{k,\ell\}$ minor is tropically negative.

Suppose now that $A$ satisfies condition~\eqref{cond1} but not condition~\eqref{cond2}. Then there 
exists~$ i$ such that the first (or resp.~last) non-$\zero$   entry~$A_{i,j}$ in row~$i$ 
appears  to the left of   the  first (or resp.~last) non-$\zero$   entry~$A_{i-1,k}$ in row~$i-1$.
As a result, the $\{i-1,i\}\times\{j,k\}$ minor is tropically negative.
\end{proof}

The following corollary is a straightforward consequence of~\Cref{TNDE}, since a matrix with no~$\zero$ maximal solid minor cannot have a
$\zero$-row and a no $\zero$-column.
\begin{cor}\label{DE} 
 If all the maximal solid minors 
of~$A\in (\TN^{\trop})^{n\times m}$ are non-$\zero$,
then~$A$ is in double echelon form.
\end{cor}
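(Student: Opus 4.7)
The plan is to reduce Corollary \ref{DE} directly to \Cref{TNDE} by showing that the hypothesis on maximal solid minors precludes both a $\zero$ row and a $\zero$ column of $A$. Set $k=\min(n,m)$, so that the maximal solid submatrices of $A$ are exactly its $k\times k$ contiguous windows.

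First I would rule out a $\zero$ row. If $n\leq m$, then every row of $A$ belongs to every maximal solid window (all of which use all $n$ rows), so a $\zero$ row would force every maximal solid minor to contain a $\zero$ row and hence to have tropical permanent $\zero$, contradicting the hypothesis. If instead $n>m$, the windows use $m$ consecutive rows and all $m$ columns; for each row index $i\in[n]$ there exists a block of $m$ consecutive row indices inside $[n]$ containing $i$ (for instance the block ending at $\max(i,m)$, which lies in $[n]$ because $m\leq n$), so $i$ is still covered by some maximal solid window. A $\zero$ row would again kill the corresponding minor. Thus $A$ has no $\zero$ row.

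The argument for columns is strictly symmetric: when $n\geq m$ every column sits in every window, and when $n<m$ every column index $j\in[m]$ lies in some block of $n$ consecutive column indices in $[m]$ (e.g.\ the block ending at $\max(j,n)$). In either case a $\zero$ column would produce a maximal solid minor with a $\zero$ column, hence tropical permanent $\zero$, a contradiction. So $A$ has no $\zero$ column either, and \Cref{TNDE} delivers the double echelon form.

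I do not anticipate any real obstacle: the whole argument is a short covering/pigeonhole observation on top of \Cref{TNDE}. The only point of mild care is to treat the rectangular cases $n\leq m$ and $n\geq m$ separately (so that every row and every column is witnessed by at least one maximal solid submatrix), which is why I split the argument into the two regimes above.
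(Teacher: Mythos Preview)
Your proposal is correct and follows exactly the paper's approach: the paper deduces the corollary from \Cref{TNDE} in a single sentence, observing that a matrix with no $\zero$ maximal solid minor cannot have a $\zero$ row or a $\zero$ column. Your case split on $n\leq m$ versus $n>m$ simply spells out this observation in detail.
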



\subsection{Staircase  matrices}\label{sec-staircase}
For all $(i,j)\in ([n]\setminus\{1\})\times (j\in[m]\setminus\{1\})$, 
we define the \textbf{elementary staircase matrix} $L^{(i,j)}\in \R^{n\times m}$
as follows:

\[ L^{(i,j)}_{t,s}=
\begin{cases}1&\text{ if } (t,s)\in[n]\setminus[i-1]\times[m]\setminus[j-1]\\0&
\text{otherwise }\end{cases}\enspace .
\]
For instance, for $m=n=3$,
\[
L^{(2,3)} = \left(\begin{array}{ccc}
0 & 0 & 0\\
0 & 0 & 1\\
0 & 0 & 1
\end{array}\right)
\]
A matrix~$A\in\mathbb{R}^{n\times m}$ is \textbf{a staircase matrix} 
if it can be written as a nonnegative linear combination
of elementary staircases matrices.
We shall use the following characterization which was shown in~\cite{FIEDLER};
earlier characterizations of the same nature appeared
in~\cite[Lemma~2.1]{BKR}. 
\begin{thm}[{\cite[Theorem~3.3]{FIEDLER}}]\label{th-fiedler}
A matrix~$A\in \R^{n\times m}$ is a Monge matrix if and only
if there is a staircase matrix $S\in \R^{n\times m}$
and two vectors $u \in \R^n$ and $v\in \R^m$ such
that 
\[
A_{ij}=S_{ij}+u_i+v_j 
\]
\end{thm}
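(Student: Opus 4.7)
The plan is to prove both directions directly using the characterization of the Monge property on consecutive indices (\Cref{subsequent0}).

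For the (easy) ``if'' direction, I would first observe that any rank-one contribution of the form $u_i + v_j$ trivially satisfies the Monge property with equality, so it suffices to check that each elementary staircase $L^{(k,l)}$ is Monge, and that the class of Monge matrices is closed under nonnegative linear combinations (this last point is clear from the definition~\eqref{e-def-Monge}). For a fixed $(k,l)$, write $L^{(k,l)}_{i,j} = \mathbf{1}[i \geq k]\,\mathbf{1}[j \geq l]$. Then for any $i < i'$ and $j < j'$, setting $a=\mathbf{1}[i\geq k]$, $b=\mathbf{1}[i'\geq k]$, $c=\mathbf{1}[j\geq l]$, $d=\mathbf{1}[j'\geq l]$, one has $a\leq b$ and $c\leq d$, so that
\[
L^{(k,l)}_{i,j}+L^{(k,l)}_{i',j'}-L^{(k,l)}_{i,j'}-L^{(k,l)}_{i',j}=ac+bd-ad-bc=(b-a)(d-c)\geq 0,
\]
which gives the Monge inequality.

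For the (harder) ``only if'' direction, given a Monge matrix $A$, the plan is to first subtract off suitable rank-one pieces so as to normalize the first row and first column to zero, and then to read off the staircase coefficients as second-order differences. Concretely, I would set $u_i := A_{i,1}$ for $i\in[n]$, $v_1 := 0$, and $v_j := A_{1,j}-A_{1,1}$ for $j\geq 2$, and define $B_{i,j}:=A_{i,j}-u_i-v_j$. A direct check gives $B_{i,1}=0$ and $B_{1,j}=0$ for all $i,j$, and $B$ still satisfies the Monge property because the subtracted term is rank-one. Next, for $i,j\geq 2$, define
\[
s_{i,j}:= B_{i,j}-B_{i,j-1}-B_{i-1,j}+B_{i-1,j-1},
\]
which is nonnegative by the consecutive-index form of the Monge property (\Cref{subsequent0}).

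The main step is then to verify the telescoping identity $B_{p,q}=\sum_{k=2}^{p}\sum_{l=2}^{q}s_{k,l}$ for all $p,q$, i.e.\ that $B=\sum_{k,l\geq 2} s_{k,l}L^{(k,l)}$. Summing $s_{k,l}$ first over $l$ from $2$ to $q$ collapses to $(B_{k,q}-B_{k,1})-(B_{k-1,q}-B_{k-1,1})$, and then summing over $k$ from $2$ to $p$ collapses to $(B_{p,q}-B_{1,q})-(B_{p,1}-B_{1,1})=B_{p,q}$, using that the first row and column of $B$ are zero. This exhibits $B$ as a nonnegative linear combination of elementary staircase matrices, and yields the desired decomposition $A_{ij}=S_{ij}+u_i+v_j$ with $S:=B$. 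I do not expect a serious obstacle here; the only care needed is with the normalization of $u$ and $v$ so that $B$ has vanishing first row and first column, which is what makes the telescoping close up cleanly.
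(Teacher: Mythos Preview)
Your proof is correct. Note that the paper does not actually supply a proof of this theorem: it is quoted from \cite[Theorem~3.3]{FIEDLER} and used as a black box. That said, the decomposition you derive---subtracting off a rank-one term to zero out the first row and column, then writing the remainder as $\sum_{k,l\geq 2}(B_{k,l}+B_{k-1,l-1}-B_{k-1,l}-B_{k,l-1})L^{(k,l)}$---is exactly the formula the paper records later as~\eqref{lcsc} in the proof of \Cref{prop-new}, so your argument is fully in line with how the paper uses the result.
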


The following result follows readily by combining Theorem~\ref{trop2},
showing that the matrices $\TN^\trop(\mathbb{R})$ are precisely
the Monge matrices, with Theorem~\ref{th-fiedler}.
\begin{thm} \label{SC}  
A matrix~$A$ is in~$\TN^\trop(\mathbb{R})$ if and only if 
there exist tropical diagonal matrices~$D,D'$ such that~$D\odot A\odot D'$ 
is a staircase matrix.\end{thm}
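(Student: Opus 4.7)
The plan is to reduce the statement directly to Fiedler's characterization (Theorem~\ref{th-fiedler}) after translating between the tropical notation $D\odot A\odot D'$ and Fiedler's additive decomposition $A_{ij}=S_{ij}+u_i+v_j$. By Theorem~\ref{trop2}, the set $\TN^\trop(\R)$ coincides with $\TN^{\trop}_2(\R)$, which by definition is the set of Monge matrices. Hence the theorem reduces to the claim that a real matrix is Monge if and only if it is obtained from a staircase matrix by adding row and column constants.

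For the forward direction, I would take $A\in\TN^\trop(\R)$, invoke Theorem~\ref{trop2} to conclude that $A$ is Monge, and apply Theorem~\ref{th-fiedler} to obtain a staircase matrix $S\in\R^{n\times m}$ and vectors $u\in\R^n$, $v\in\R^m$ with $A_{ij}=S_{ij}+u_i+v_j$. Define the tropical diagonal matrices $D$ and $D'$ by $D_{ii}:=-u_i$ and $D'_{jj}:=-v_j$ (with $\minusinfty$ off-diagonal). A direct computation in tropical notation gives
\[
(D\odot A\odot D')_{ij} = D_{ii}+A_{ij}+D'_{jj} = -u_i+(S_{ij}+u_i+v_j)-v_j = S_{ij},
\]
so $D\odot A\odot D'=S$ is a staircase matrix.

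For the converse, suppose there exist tropical diagonal matrices $D,D'$ such that $S:=D\odot A\odot D'$ is a staircase matrix; write $u_i:=-D_{ii}$ and $v_j:=-D'_{jj}$, so that $A_{ij}=S_{ij}+u_i+v_j$. Two observations suffice. First, every elementary staircase matrix $L^{(i_0,j_0)}$ is Monge: the $\{i,i'\}\times\{j,j'\}$ minor with $i<i'$ and $j<j'$ is either $0$, or the support of the indicator forces both $L^{(i_0,j_0)}_{i,j}$ and $L^{(i_0,j_0)}_{i',j'}$ to equal $1$ whenever either of the ``antidiagonal'' entries does, which yields the Monge inequality by case analysis. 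Since the Monge property~\eqref{e-def-Monge} is preserved under nonnegative linear combinations and under addition of terms of the form $u_i+v_j$ (both sides of the inequality gain $u_i+u_{i'}+v_j+v_{j'}$), it follows that $A$ is itself Monge, hence in $\TN^\trop_2(\R)=\TN^\trop(\R)$ by Theorem~\ref{trop2}.

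There is no real obstacle here beyond carefully matching conventions: the only step that is not a pure rewriting is checking that elementary staircases are Monge and that the Monge cone is closed under the operations described, both of which are elementary.
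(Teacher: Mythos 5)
Your proof is correct and follows essentially the same route as the paper, which obtains \Cref{SC} by combining \Cref{trop2} (identifying $\TN^\trop(\R)$ with the Monge matrices) with Fiedler's characterization (\Cref{th-fiedler}), the tropical diagonal scaling being exactly the additive shift $u_i+v_j$. Your extra verification that elementary staircases are Monge and that the Monge cone is closed under such shifts is a correct (if redundant) substitute for simply citing the ``if'' direction of \Cref{th-fiedler}.
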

\begin{exa}
The following factorization illustrates this result:
\begin{align*}
\left(\begin{array}{ccc}0&\minusinfty&\minusinfty\\\minusinfty&1&\minusinfty\\\minusinfty&\minusinfty&2
\end{array}\right)\odot
\overbrace{
\left(\begin{array}{ccc}1&0&-1\\
0&1&0\\-1&0&1\end{array}
\right)
}^{\text{in}\ \TN^\trop(\R)}
\odot\left(\begin{array}{ccc}-1&\minusinfty&\minusinfty\\
\minusinfty&0&\minusinfty\\\minusinfty&\minusinfty&1
\end{array}\right)&=
\left(\begin{array}{ccc}0&0&0\\0&2&2\\0&2&4\end{array}\right)\\
&=2L^{(2,2)}+2L^{(3,3)}\enspace .\end{align*}
\end{exa}

\Cref{SC} provides a polyhedral characterization
of $\TN^\trop(\mathbb{R})$. 
\begin{cor}\label{prop-new}
The set $\TN^\trop(\mathbb{R})$, thought of as a subset
of $\R^{n\times m}$, is a polyhedron which can be written
as the Minkowski sum $\mathsf{V}+\mathsf{S}$, where
\[
\mathsf{V}=\{(d_i+d'_j)_{i\in [n],j\in[m]}, d\in \R^n,d'\in \R^m\}
\]
is the $n+m-1$ dimensional lineality space of this polyhedron, 
and $\mathsf{S}$ is the set of staircase matrices, which
is a simplicial cone of dimension $(n-1)(m-1)$ with extreme
rays generated by the elementary staircase matrices $L^{(i,j)}$, 
for 
$(i,j)\in 
([n]\setminus\{1\})\times (j\in[m]\setminus\{1\})$. 
\end{cor}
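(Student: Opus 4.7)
The plan is to derive the corollary directly from \Cref{SC} by unwinding the tropical diagonal scalings as additive translations along rows and columns, and then verifying the polyhedral structure by elementary linear algebra.

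First, I would interpret \Cref{SC} additively: writing the tropical diagonal matrices as $D = \diag(-u_1,\dots,-u_n)$ and $D' = \diag(-v_1,\dots,-v_m)$, the factorization $D \odot A \odot D' = S$ amounts to $A_{ij} = S_{ij} + u_i + v_j$ with $S$ a staircase matrix. This yields immediately $\TN^\trop(\R) = \mathsf{V} + \mathsf{S}$. Then $\mathsf{V}$ is the image of the linear map $\R^n \times \R^m \ni (u,v) \mapsto (u_i+v_j)_{i,j}$, whose kernel is the one-dimensional subspace spanned by $(\mathbf{1}_n,-\mathbf{1}_m)$, so $\dim \mathsf{V} = n+m-1$.

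Next, I would check that the $(n-1)(m-1)$ matrices $L^{(i,j)}$ are linearly independent. Given a relation $\sum c_{i,j} L^{(i,j)} = 0$, the $(2,2)$-entry picks out only $L^{(2,2)}$, hence $c_{2,2}=0$; proceeding inductively along the componentwise partial order on $\{2,\dots,n\}\times\{2,\dots,m\}$, the $(i,j)$-entry sees only coefficients $c_{i',j'}$ with $i'\leq i$ and $j'\leq j$, forcing $c_{i,j}=0$. This shows that $\mathsf{S}$ is a simplicial cone of dimension $(n-1)(m-1)$ whose extreme rays are the $L^{(i,j)}$; in particular $\mathsf{S}$ is pointed.

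Finally, I would verify that $\mathsf{V}$ is the full lineality space of $\mathsf{V}+\mathsf{S}$. The inclusion $\mathsf{V} \subseteq \operatorname{lin}(\TN^\trop(\R))$ is clear since $\mathsf{V}$ is a linear subspace. For the converse, if $x$ and $-x$ both belong to $\mathsf{V}+\mathsf{S}$, writing $x=u+s$ and $-x=u'+s'$ gives $s+s'=-(u+u')\in \mathsf{V}$. The key observation is that $\mathsf{V}\cap \mathsf{S}=\{0\}$: any staircase matrix vanishes on its first row and first column (since $L^{(i,j)}$ is identically zero there for $i,j\geq 2$), so if $(u_i+v_j)_{i,j}$ equals such a matrix then $u_1+v_j=0$ for every $j$ and $u_i+v_1=0$ for every $i$, which forces $u_i+v_j\equiv 0$. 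Applied to $s+s'\in \mathsf{V}\cap \mathsf{S}$ (using that $\mathsf{S}$ is closed under addition) this gives $s+s'=0$; pointedness of $\mathsf{S}$ then yields $s=s'=0$, so $x=u\in \mathsf{V}$. I expect the main obstacle to be this last step, identifying the lineality space with $\mathsf{V}$; the remainder is routine bookkeeping translating \Cref{SC} into polyhedral language.
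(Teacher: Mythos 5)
Your proof is correct, and it follows the same overall route as the paper: both arguments start from \Cref{SC}, rewrite the diagonal scalings additively to get $\TN^\trop(\R)=\mathsf{V}+\mathsf{S}$, and then analyse the two pieces. The differences are in how the two sub-claims are verified. For simpliciality of $\mathsf{S}$, the paper identifies $\mathsf{S}$ with the Monge matrices having zero first row and column and exhibits the explicit inversion formula~\eqref{lcsc} expressing each such matrix uniquely in terms of the $L^{(i,j)}$; you instead prove linear independence of the $L^{(i,j)}$ directly by induction on the componentwise order. Both work; the paper's formula has the side benefit of describing $\mathsf{S}$ by equations, which is reused in \Cref{rem-newbis}, while yours is shorter. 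For the lineality space, the paper's justification is the one-liner ``since $\mathsf{S}$ contains no affine line, $\mathsf{V}$ must coincide with the lineality space,'' which as a general principle about Minkowski sums $\mathsf{V}+\mathsf{S}$ is not sufficient on its own (a pointed cone added to a subspace can still create extra lineality when the cone meets the subspace nontrivially). Your argument supplies exactly the missing ingredient: the transversality $\mathsf{V}\cap\mathsf{S}=\{0\}$, proved via vanishing of staircase matrices on the first row and column, combined with $\mathsf{S}+\mathsf{S}\subseteq\mathsf{S}$ and pointedness. On that step your write-up is the more complete of the two.
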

\begin{proof}
The decomposition of \Cref{SC} shows that
$\TN^\trop(\R)=\mathsf{V}+\mathsf{S}$.
Since $\mathsf{S}$ contains no affine line, 
$\mathsf{V}$ must coincide with the lineality space
of $\TN^\trop(\R)$. 
It also follows from \Cref{SC} that $\mathsf{S}$ is 
precisely the set of Monge matrices with 
a zero first row and a zero
first column. Each matrix in this set can be written in a unique
way as a positive linear combination of elementary
staircase matrices, indeed,
\begin{align}\label{lcsc}A&=\ \sum_{i=3}^{n}\sum_{j=3}^{m}
  (A_{i,j}+A_{i-1,j-1}-A_{i-1,j}-A_{i,j-1})L^{(i,j)}+\\
&\qquad \qquad \sum_{j=3}^{m}(A_{2,j}-A_{2,j-1})L^{(2,j)}+\sum_{i=3}^{n}
 (A_{i,2}-A_{i-1,2})L^{(i,2)}+A_{2,2}^{(2,2)} L^{(2,2)}
\nonumber
\end{align}
This implies that $\mathsf{S}$ is the $(n-1)(m-1)$-dimensional
simplicial
cone generated by the matrices $L^{(i,j)}$, 
$(i,j)\in ([n]\setminus\{1\})\times ([m]\setminus\{1\})$. 
Observe that the dimensions of $\mathsf{S}$ and $\mathsf{V}$ sum
to $nm$, in accordance with $\TN^\trop(\R)$ being full
dimensional (it has non empty interior).
\end{proof}




\begin{rem}\label{rem-newbis}
Dually, the collection of inequalities
\[
%
A_{i,j}+ A_{i+1,j+1}\geq A_{i,j+1}+ A_{i+1,j},
\]
for $1\leq i\leq n-1$ and $1\leq j\leq m-1$ define
$\TN^\trop(\mathbb{R})$ (by \Cref{subsequent0}). There are
$(n-1)(m-1)$ inequalities of this kind. This collection
of inequalities is minimal. Otherwise, we would eliminate
some of these inequalities, and end up with a representation
of the set of staircase matrices, identified to a 
polyhedral cone of $\R^{(n-1)\times(m-1)}$, by fewer than $(n-1)(m-1)$ inequalities. This is absurd, since it follows from \Cref{prop-new} that this polyhedral
cone is simplicial of dimension $(n-1)(m-1)$, so its number of facets,
which coincides with the minimal cardinality of a defining set of inequalities,
is also $(n-1)(m-1)$.
\end{rem}






\section{Matrix factorization and lifts of~$\TP^\trop$and~$\TN^\trop$}\label{factlift}  
In this section, we relate
the tropical of matrix classes with the image by the valuation of
the corresponding matrix classes over a nonarchimedean field.
In particular, 
we find a tropical analogue to 
the Loewner--Whitney theorem,
which appeared in Loewner's work~\cite{Loewner},
being deduced there from a result of Whitney~\cite{Whitney}.
This theorem shows that any invertible totally nonnegative
matrix is a product of nonnegative elementary Jacobi matrices
(see for  instance~\cite[Theorem~12]{F&Z}).

\begin{df} An 
(elementary)
\textbf{Jacobi matrix} is an invertible matrix that 
differs from the identity matrix in at most one entry
located either on the main diagonal or immediately above
or below it. Analogously, an 
(elementary) \textbf{tropical Jacobi matrix} differs from 
the tropical identity matrix in at most one entry, which must be finite
and located either on the main diagonal or immediately above
or below it.
 \end{df}
This definition follows the one
of notion of  
~\cite{F&Z}, and corresponds to 
the~$LU$ factorization in~\cite{Fallat&Johnson}.

\begin{theorem}
\label{rltn} Over $\ \R_{\max}^{n\times m}$ we have 
\begin{equation}\label{rqtrl}
\TP^{\trop}\subsetneqq 
\val(\TP)\subsetneqq\val(\TN) \subset\TN^{\trop},\ \  \forall n,m\enspace,\end{equation}  
and when~$ n=m$ we  have
\begin{equation}\label{sqrl}\val(\TP^{n\times n})\subsetneqq\val(\GL_n\cap\TN)=
\langle\text{tropical~Jacobi~matrices}\rangle\subsetneqq
\val(\TN^{n\times n}) \enspace,\end{equation}
where $\langle \cdot \rangle$ denotes the semigroup generated
by a set of matrices.
\end{theorem}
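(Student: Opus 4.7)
The plan is to establish \eqref{rqtrl} and \eqref{sqrl} by treating each inclusion separately, leveraging \Cref{thA} (which identifies $\val(\TP)$ with $\TN^\trop(\R)$) for the chain \eqref{rqtrl}, and the classical Loewner--Whitney factorization for the equality in \eqref{sqrl}. The inclusion $\val(\TP)\subseteq \val(\TN)$ is immediate from $\TP\subseteq\TN$, and $\TP^\trop\subseteq\val(\TP)$ follows at once from \Cref{thA} together with the obvious inclusion $\TP^\trop\subseteq\TN^\trop(\R)$. For the remaining inclusion $\val(\TN)\subseteq\TN^\trop$, take $\puiseuxA\in\TN$ and a square submatrix $\puiseuxB$ with $B:=\val(\puiseuxB)$: if $B$ is tropically sign-nonsingular then $\val(\det\puiseuxB)=\per(B)$ and the sign of $\det\puiseuxB\geq 0$ forces the common parity of maximum-weight permutations to be even, making the tropical minor tropically nonnegative; if $B$ is sign-singular, the tropical minor is tropically nonnegative by definition.

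The heart of \eqref{sqrl} is the equality $\val(\GL_n\cap\TN)=\langle\text{tropical Jacobi matrices}\rangle$. For $\supseteq$, each elementary tropical Jacobi matrix admits a canonical lift to a classical nonnegative elementary Jacobi matrix, obtained by replacing each finite entry $\alpha\in\R$ by $t^\alpha$ and each $-\infty$ by $0$; such a lift lies in $\GL_n\cap\TN(\K)$. Since valuation is a morphism on $\K_{\geq 0}$ by \eqref{e-morphism}, it commutes with products of matrices with nonnegative entries, so any product of tropical Jacobi matrices lifts to an element of $\GL_n\cap\TN$ with the prescribed valuation. For $\subseteq$, the plan is to apply Loewner--Whitney to factor a given $\puiseuxA\in\GL_n\cap\TN$ into nonnegative elementary Jacobi matrices, and then to take valuations term by term.

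The strict inclusions follow from simple examples. The tropical matrix with all entries equal to $0\in\R$ lies in $\TN^\trop(\R)=\val(\TP)$ but has sign-singular $2\times 2$ minors, hence is outside $\TP^\trop$, giving $\TP^\trop\subsetneq\val(\TP)$. The valuation of the classical zero matrix (all entries $-\infty$) lies in $\val(\TN)\setminus\val(\TP)$, since $\val(\TP)\subseteq\R^{n\times m}$ has no $-\infty$ entries. For $\val(\TP^{n\times n})\subsetneq\val(\GL_n\cap\TN)$, the tropical identity matrix is itself an elementary tropical Jacobi matrix but has $-\infty$ off-diagonal entries, preventing it from belonging to $\val(\TP)\subseteq\R^{n\times n}$. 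For $\langle\text{tropical Jacobi matrices}\rangle\subsetneq\val(\TN^{n\times n})$, observe that every elementary tropical Jacobi matrix has a finite identity-permutation contribution to its permanent, a property that propagates under tropical matrix multiplication (the $(i,i)$-entry of a tropical product is at least the tropical product of the $(i,i)$-entries of the factors); hence every semigroup element has finite tropical permanent, whereas the valuation of the classical zero matrix has permanent $-\infty$.

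The chief technical hurdle will be the validity of Loewner--Whitney over the nonarchimedean field $\K$ in place of $\R$. The plan is to invoke Tarski's completeness theorem discussed in \Cref{subsec-nonarch}: for each fixed $n$, the existence of a factorization of a matrix in $\GL_n\cap\TN$ into a bounded-in-$n$ number of elementary nonnegative Jacobi matrices is a first-order sentence in the language of ordered fields, true over $\R$ by~\cite[Theorem~12]{F&Z}, and therefore true over any real closed field, in particular over $\K$. A direct alternative is to transcribe the Gaussian-elimination proof of Loewner--Whitney, which is purely algebraic and extends verbatim to $\K$.
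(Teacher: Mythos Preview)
Your proposal is largely sound, but there is a circularity in invoking \Cref{thA} to obtain $\TP^\trop\subseteq\val(\TP)$ and to certify your strictness witness. In the paper's logical structure, \Cref{thA} is assembled from \Cref{coro-fin} and \Cref{diagcor}, both of which rest on \Cref{coro-reverse}, whose proof explicitly cites the inclusion $\val(\TN)\subset\TN^\trop$ established \emph{in the very theorem you are proving}. You do supply an independent argument for $\val(\TN)\subset\TN^\trop$, so that particular loop could in principle be unwound; but \Cref{thA} also requires \Cref{eqrltn}, a substantial later result, to get $\val(\TP)=\TN^\trop(\R)$. The paper avoids all of this: it proves $\TP^\trop\subset\val(\TP)$ directly and elementarily by observing that the canonical lift $(t^{A_{ij}})$ of any $A\in\TP^\trop$ lies in $\TP(\K)$, since in each minor the monomials of top valuation all come from even permutations and hence carry positive sign. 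The same direct route furnishes the paper's strictness witness $\left(\begin{smallmatrix}2t&t\\1&1\end{smallmatrix}\right)\in\TP$ with valuation $\left(\begin{smallmatrix}1&1\\0&0\end{smallmatrix}\right)\notin\TP^\trop$, without any appeal to $\val(\TP)=\TN^\trop(\R)$.

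The remainder of your proposal --- the sign-nonsingularity argument for $\val(\TN)\subset\TN^\trop$ (the paper instead routes this through \Cref{2trop2} and \Cref{trop2}), the Loewner--Whitney equality via lifting tropical Jacobi products using the semifield morphism property~\eqref{e-morphism}, and the strictness witnesses for~\eqref{sqrl} --- is correct and matches the paper. Your explicit Tarski transfer of Loewner--Whitney to $\K$, relying on a bounded-in-$n$ number of factors, is a genuinely useful elaboration; the paper simply asserts the factorization over $\K$ without comment.
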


\begin{proof}  We consider the canonical lift~$\puiseuxA$ of~$A\in\TP^{\trop}$, and note that 
$\val : \K_{\geq 0}\rightarrow \rmax$ is a
morphism of semifields. 
For a~$d\times d$ submatrix~$M$ 
in~$A$, indices naturally ordered, with the  
corresponding  submatrix~$\puiseuxM=(t^{M_{i,j}})$ 
in~$\puiseuxA$,  we have $$\det(\puiseuxM)=\sum_{\sigma\in S_d}
\sign(\sigma)\prod_{i\in [d]}t^{M_{i,\sigma(i)}}=
\sum_{\sigma\in S_d}\sign(\sigma)t^{\sum_{i\in [d]}
M_{i,\sigma(i)}}\enspace .$$ 
Since  the permutations 
of maximal weight in~$\per(M)$ are even, the monomials
with maximal valuation in the latter expansion have a positive sign,
and so
$\det(\puiseuxM)\in\mathbb{K}_{>0}$, 
which shows that 
$\TP^{\trop}\subset \val(\TP)$. Note also in passing that the
same proof shows that any lift $\puiseuxA$ of $A$ is totally positive.

The first inclusion in \eqref{rqtrl} is strict since  $$\puiseuxA=\left(\begin{array}{cc}2t&t\\
1&1\end{array}\right)\in \TP\ \ \text{ but }\ \ \val(\puiseuxA)=
\left(\begin{array}{cc}1&1\\0&0\end{array}\right)\notin \TP^\trop\enspace .$$
The second inclusion in~\eqref{rqtrl},
as well as the inclusions in~\eqref{sqrl}, are trivial 
whereas the third inclusion in~\eqref{rqtrl}
follows from \Cref{2trop2} and \Cref{trop2}.

The inclusions in~\eqref{sqrl}, and the second inclusion
in~\eqref{rqtrl}, are
strict since 
$$\val\left(\begin{array}{cc}1&0\\0&1\end{array}\right)
\in\val(\GL_2\cap\TN)\setminus\val(\TP)\ \text{ and }\ 
\val\left(\begin{array}{cc}1&0\\0&0\end{array}\right)
\in\val(\TN)\setminus\val(\GL_2\cap\TN)\enspace.$$

The Loewner-Whitney theorem shows that $\GL_n\cap\TN=\langle\text{nonnegative Jacobi matrices}\rangle$. Observe that the valuation sends
the nonnegative Jacobi matrices with entries in $\mathbb{K}$ to
the tropical Jacobi matrices.
Since~$\val: \mathbb{K}_{\geq 0}\rightarrow \rmax$ 
is a morphism of semifields, we deduce
that the image by the valuation of the semigroup 
$\langle\text{nonnegative Jacobi~matrices}\rangle$
is included in the semigroup $\langle\text{tropical~Jacobi~matrices}\rangle$.
Conversely, $\val\langle\text{nonnegative Jacobi matrices}\rangle\supset\langle
\text{tropical~Jacobi~matrices}\rangle$ since every tropical Jacobi 
matrix~$J$ can be trivially lifted to a Jacobi matrix~$\puiseuxJ$ 
over~$\mathbb{K}$. Thus, if~$A=J_1\odot\cdots\odot J_k$, then~$\puiseuxA$ 
such that~$\val(\puiseuxA)=A$ may be defined by~$\puiseuxJ_1
\cdots \puiseuxJ_k$, where~$\val(\puiseuxJ_i)=J_i\enspace\forall i$. 
\end{proof}

\begin{theorem}
\label{rltn2} A matrix~$A$ is in~$\TP^{\trop}$ if and only if every lift of $A$ is 
in $\TP$. 
\end{theorem}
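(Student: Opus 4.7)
The plan is to prove both directions, with the forward implication being a refinement of the leading-term argument already used in the proof of \Cref{rltn} (which there was stated only for the canonical lift), and the reverse implication reducing, via the equality $\TP^\trop=\TP_2^\trop$ of \Cref{trop2}, to an explicit computation with the canonical lift.

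For the forward direction, I would start by assuming $A\in\TP^\trop$, so in particular all entries of $A$ lie in $\R$. Let $\puiseuxA$ be an arbitrary lift of $A$; since each $A_{i,j}\in\R$, every entry $\puiseuxA_{i,j}\in\K_{>0}$ has a strictly positive leading coefficient and valuation equal to $A_{i,j}$. For an arbitrary $d\times d$ submatrix $\puiseuxM$ corresponding to a tropical submatrix $M$, expand
\[
\det(\puiseuxM)=\sum_{\sigma\in S_d}\sign(\sigma)\prod_{i}\puiseuxA_{i,\sigma(i)}\enspace.
\]
The valuation of the summand indexed by $\sigma$ is $\sum_i A_{i,\sigma(i)}$, so the monomials contributing to the leading power $t^{\per(M)}$ correspond exactly to the permutations of maximum weight. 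By tropical positivity these are all even, and their contributions, being products of positive leading coefficients, are all strictly positive. Hence the leading coefficient of $\det(\puiseuxM)$ is a sum of positive reals, so $\det(\puiseuxM)\in\K_{>0}$; since $\puiseuxM$ was arbitrary, $\puiseuxA\in\TP$.

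For the converse, I would argue by contraposition: assuming $A\notin\TP^\trop$, I would exhibit a specific lift that is not in $\TP$. If some entry $A_{i,j}$ equals $-\infty$, then every lift has $\puiseuxA_{i,j}=0$ by the convention on lifts, and the $1\times 1$ minor at $(i,j)$ vanishes, so no lift of $A$ can be in $\TP$. Otherwise all entries of $A$ are real, and by $\TP^\trop=\TP^\trop_2$ (\Cref{trop2}) there exist indices $i<i'$ and $j<j'$ such that
\[
A_{i,j}+A_{i',j'}\leq A_{i,j'}+A_{i',j}\enspace.
\]
Then for the canonical lift $\puiseuxA=(t^{A_{k,\ell}})$ the $\{i,i'\}\times\{j,j'\}$ minor equals
\[
t^{A_{i,j}+A_{i',j'}}-t^{A_{i,j'}+A_{i',j}}\enspace,
\]
whose leading term has coefficient $-1$ (or the difference is identically zero in the equality case), so this minor is not in $\K_{>0}$. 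Hence the canonical lift is a witnessing lift not belonging to $\TP$, completing the contrapositive.

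I do not anticipate any serious obstacle: the forward direction is a short leading-term computation essentially recorded already in the parenthetical remark after \Cref{rltn}, and the converse is a one-line consequence of \Cref{trop2} once one notices that the canonical lift is a concrete, always available choice. The only subtlety worth flagging is that the statement concerns $\TP^\trop$, whose definition forces real entries, so the case $A_{i,j}=-\infty$ must be treated separately before invoking \Cref{trop2}.
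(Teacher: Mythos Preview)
Your proof is correct and follows essentially the same approach as the paper: the forward direction is the leading-term argument already recorded (for arbitrary lifts) in the proof of \Cref{rltn}, and the converse proceeds by contraposition via $\TP^\trop=\TP^\trop_2$ from \Cref{trop2}, exhibiting a lift with a nonpositive $2\times 2$ minor after disposing of the $-\infty$ case.

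The one difference worth noting is the choice of witnessing lift in the converse. You use the canonical lift $(t^{A_{k,\ell}})$, observing that the offending $2\times 2$ minor $t^{A_{i,j}+A_{i',j'}}-t^{A_{i,j'}+A_{i',j}}$ is either zero or has negative leading coefficient. The paper instead perturbs the canonical lift multiplicatively, taking $\puiseuxA_{k,\ell}=b_{k,\ell}t^{A_{k,\ell}}$ with $b_{k,\ell}>1$ on the two anti-diagonal positions of the bad submatrix and $b_{k,\ell}=1$ elsewhere, which forces the minor to be strictly negative even in the equality case. Your choice is simpler and entirely sufficient, since a zero minor already excludes membership in $\TP$; the paper's perturbation yields a strictly negative minor but is not needed for the statement as phrased.
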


\begin{proof}
We already showed in the initial part of the proof 
of \Cref{rltn} that if $A$ is in $\TP^{\trop}$,
any lift of $A$ is in $\TP$.

We now show the ``if"  part of the statement.
We assume~$A\notin \TP^\trop$ and find a lift not in~$\TP$.
Since, by \Cref{trop2}, $\TP^\trop=\TP^\trop_2$, we
deduce that either $A$ has an infinite entry, or that 
$A$ has a $2\times 2$ submatrix, say its $\{i_1,i_2\}\times \{j_1,j_2\}$
submatrix, with a tropically nonpositive determinant.
In the former case, every lift of $A$ has a $0$ entry, and so
no lift of $A$ can be totally positive. In the latter case, we
have $A_{i_1,j_2}+A_{i_2,j_1}\geq A_{i_1,j_1}+ A_{i_2,j_2}>\minusinfty$. 
Choose now any lift $\puiseuxA$ of $A$ of the form
$\puiseuxA_{i,j}= b_{i,j}t^{A_{i,j}}$, with
$b_{i,j}>1$ for $(i,j)\in \{(i_1,j_2),
(i_2,j_1)\}$ and $b_{i,j}=1$ otherwise.  It is immediate
that $\puiseuxA_{i_1,j_2}\puiseuxA_{i_2,j_1}>\puiseuxA_{i_1,j_1}\puiseuxA_{i_2,j_2}$,
showing that $\puiseuxA$ is not totally positive.
\end{proof}

An inspection of the  proof above suggests the following
more general construction. It will be convenient to denote by $*$ the Hadamard
product (i.e., entrywise product) of two matrices. 
Let $\puiseuxB\in \K^{n\times m}$, $A\in \rmax^{n\times m}$, consider the canonical
lift $t^A:=(t^{A_{ij}})$,  together with the Hadamard product
$\puiseuxA:= \puiseuxB * t^A$, i.e.,
\begin{align}
\puiseuxA_{i,j}:=\puiseuxB_{ij}t^{A_{ij}}
\label{e-def-hadamard}\end{align}
Observe that if $\puiseuxB\in \Kpos^{n\times m}$ with
$\val \puiseuxB_{ij} =0$ for all $i,j$,
which is the case in particular if $\puiseuxB\in \R_{>0}^{n\times m}$,
then, $\puiseuxA$ is a lift of $A$.

One may ask whether for~$\puiseuxB\in \TN^{n\times m}$,
and~$A\in(\TN^{\trop})^{n\times m}$,
the matrix obtained by the Hadamard 
product construction~\eqref{e-def-hadamard}
is in~$\TN(\mathbb{K})$. 
The example of
$$A=\left(\begin{array}{ccc}0&0&\minusinfty\\0&0&0\\\minusinfty&0&0\end{array}
\right)\in \TN^\trop\ \text{ and }\ \puiseuxB=\left(\begin{array}{ccc}1&1&1\\
1&1&1\\1&1&1\end{array}\right)\in \TN\enspace,$$ 
leading to $\det \puiseuxA=-1$, shows
that this is not necessarily true.
The following result shows, however, that
the conclusion becomes true if 
$\puiseuxB\in \TN_{2,C}(\K)$
for a suitable constant $C$. This will also provide 
a $\TN(\K)$ lift
for every $\TN^\trop$ matrix. 

\begin{theorem}
 \label{eqrltn} Let~$A\in(\TN^{\trop})^{n\times m}$.
For~$\puiseuxB\in \TN_{2,(n-1)^2}(\K)$, 
 the Hadamard product matrix~$\puiseuxA\in\mathbb{K}^{n\times m}$ defined 
in~\eqref{e-def-hadamard}
belongs
to
~$\TN(\mathbb{K})$.
Moreover, if~$\per(A)\ne \zero$ and $n=m$, 
then 
$\puiseuxA\in\GL_n\cap\TN(\mathbb{K})$.
If, in addition, $A\in\TN^{\trop}(\mathbb{R})$,
then~$\puiseuxA\in\TP(\mathbb{K})$.
\end{theorem}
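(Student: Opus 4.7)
The plan is to deduce all three assertions from \Cref{TNc} combined with a valuation-plus-sign-nonsingularity analysis of the determinants of the square submatrices of $\puiseuxA$.

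First, I would verify that $\puiseuxA\in\TN_{2,(n-1)^2}(\K)$ so that \Cref{TNc} applies. For any $i<i'$ and $j<j'$,
$$\puiseuxA_{i,j}\puiseuxA_{i',j'}=\puiseuxB_{i,j}\puiseuxB_{i',j'}\,t^{A_{i,j}+A_{i',j'}},\qquad \puiseuxA_{i,j'}\puiseuxA_{i',j}=\puiseuxB_{i,j'}\puiseuxB_{i',j}\,t^{A_{i,j'}+A_{i',j}}.$$
The Monge inequality provided by $A\in\TN^{\trop}=\TN^{\trop}_{2}$ (\Cref{trop2}) says $A_{i,j}+A_{i',j'}\geq A_{i,j'}+A_{i',j}$, so the factor $t^{A_{i,j}+A_{i',j'}-A_{i,j'}-A_{i',j}}$ is $\geq 1$ in $\K$. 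Multiplying by the $\TN_{2,(n-1)^2}$ bound on $\puiseuxB$ yields $\puiseuxA_{i,j}\puiseuxA_{i',j'}\geq (n-1)^2\,\puiseuxA_{i,j'}\puiseuxA_{i',j}$; the case of entries with $A_{ij}=\minusinfty$ is trivial since then $t^{A_{ij}}=0$. As $(n-1)^2\geq(\min(n,m)-1)^2$, \Cref{TNc} gives $\puiseuxA\in\TN(\K)$.

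For the $\GL_n\cap\TN$ conclusion (under $n=m$ and $\per(A)\ne\zero$), \Cref{DD} shows that the identity permutation attains $\per(A)$, so all $A_{i,i}$ are finite. Moreover, since $A\in\TN^{\trop}$, the full $n\times n$ tropical minor of $A$ is tropically nonnegative and, combined with $\per(A)\ne\zero$, this forces every permutation of maximum weight in $\per(A)$ to have the same parity as the identity, i.e.\ to be even; hence $A$ is sign-nonsingular. Set $\beta_{ij}:=\val(\puiseuxB_{ij})$. Applying the semifield morphism $\val\colon\Knneg\to\rmax$ to the $\TN_{2,(n-1)^2}$ bound on $\puiseuxB$ yields $\beta_{i,j}+\beta_{i',j'}\geq\beta_{i,j'}+\beta_{i',j}$, so $\beta$ is Monge and the matrix $A+\beta$ lies in $\TN^{\trop}$; in particular, the maximum-weight permutations in $\per(A+\beta)$ are still all even. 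Consequently, in the expansion
$$\det\puiseuxA=\sum_{\sigma\in S_n}\sgn(\sigma)\Bigl(\prod_{i}\puiseuxB_{i,\sigma(i)}\Bigr)t^{\sum_i A_{i,\sigma(i)}},$$
the monomials of largest valuation do not cancel and contribute a non-zero leading coefficient, giving $\val(\det\puiseuxA)=\per(A+\beta)>\minusinfty$ and so $\det\puiseuxA\ne 0$. Hence $\puiseuxA\in\GL_n\cap\TN(\K)$.

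Finally, for the $\TP$ conclusion, assume $A\in\TN^{\trop}(\R)$. Every square submatrix $A_{I,J}$ then has finite entries, so $\per(A_{I,J})$ is finite, and the argument of the previous paragraph applied to $A_{I,J}$ and $\puiseuxA_{I,J}$ yields $\det\puiseuxA_{I,J}\ne 0$. Combining this with $\det\puiseuxA_{I,J}\geq 0$ from $\puiseuxA\in\TN(\K)$, we conclude $\det\puiseuxA_{I,J}>0$ for every square submatrix, hence $\puiseuxA\in\TP(\K)$. The main technical point is the valuation identity $\val(\det\puiseuxA)=\per(A+\beta)$ used in the second step; it rests on the observation that the valuation matrix $\beta$ of $\puiseuxB$ is itself Monge, so that $A+\beta$ inherits sign-nonsingularity from $A$ and the leading terms in the determinant expansion share a common sign. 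Once this is in place, the rest of the proof is a direct unpacking of definitions.
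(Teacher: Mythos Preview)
Your first paragraph, establishing $\puiseuxA\in\TN_{2,(n-1)^2}(\K)$ and then invoking \Cref{TNc}, is correct and is exactly what the paper does (the Hadamard-product step is isolated there as \Cref{hdmrd}).

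The second and third paragraphs contain a genuine gap. You assert that $A\in\TN^{\trop}$ together with $\per(A)\ne\zero$ forces every maximum-weight permutation in $\per(A)$ to be even. This is false: for $n=m=2$ and $A=\left(\begin{smallmatrix}0&0\\0&0\end{smallmatrix}\right)\in\TN^{\trop}$ one has $\per(A)=0\ne\zero$, yet both the identity and the transposition attain the maximum. The definition of ``tropically nonnegative minor'' only rules out the case where \emph{all} maximum-weight permutations are odd; it permits ties between parities. The same failure carries over to $A+\beta$, so your conclusion that the leading terms of $\det\puiseuxA$ share a common sign is unjustified. In fact the gap mirrors a defect in the statement itself: with the same $A$ and $\puiseuxB=\left(\begin{smallmatrix}1&1\\1&1\end{smallmatrix}\right)\in\TN_{2,(n-1)^2}(\K)=\TN_{2,1}(\K)$, one gets $\puiseuxA=\puiseuxB$ and $\det\puiseuxA=0$, so $\puiseuxA\notin\GL_2$ and $\puiseuxA\notin\TP$.

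The paper's proof of the second and third assertions does not proceed via valuations and sign-nonsingularity. It tacitly strengthens the hypothesis to $\puiseuxB\in\TP_{2,(n-1)^2}$ (strict inequalities, hence in particular nonzero entries), normalises so that $\puiseuxA_{ii}\equiv 1$, and applies the spectral estimate of \Cref{th-indep}: the strict hypothesis gives $\rho_{\max}$ of the off-diagonal part strictly below $1/(n-1)$, whence $\det\puiseuxA>0$. Your valuation route could be salvaged under a comparable strict assumption on $\puiseuxB$ (which prevents exact cancellation of leading coefficients), but not under $\TN_{2,(n-1)^2}$ alone.
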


To show this theorem, we make the following immediate observation.

\begin{lem}\label{hdmrd}
Given $C_1,C_2\geq 1$, if $M_1\in\TN_{2,C_1}$ and $M_2\in\TN_{2,C_2},$ then the Hadamard product~$M_1*M_2$ is in $\TN_{2,C_1C_2}$. \hfill\qed
\end{lem}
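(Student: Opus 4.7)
The plan is to prove this by directly multiplying together the two defining $\TN_{2,C}$ inequalities, exploiting the fact that all entries are nonnegative (as matrices in $\TN_{2,C}$ live over $\K_{\geq 0}$) so that products of inequalities preserve their direction.

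Concretely, I would fix arbitrary indices $i<i'$ and $j<j'$, and write down the two inequalities coming from the assumptions: $(M_1)_{i,j}(M_1)_{i',j'}\geq C_1 (M_1)_{i,j'}(M_1)_{i',j}$ and $(M_2)_{i,j}(M_2)_{i',j'}\geq C_2 (M_2)_{i,j'}(M_2)_{i',j}$. Since $C_1,C_2\geq 1$ and the entries belong to $\K_{\geq 0}$, both sides of each inequality are nonnegative elements of $\K$, so one may multiply the two inequalities to obtain
\[
(M_1)_{i,j}(M_1)_{i',j'}(M_2)_{i,j}(M_2)_{i',j'}\geq C_1C_2\, (M_1)_{i,j'}(M_1)_{i',j}(M_2)_{i,j'}(M_2)_{i',j}\enspace .
\]
Regrouping the factors so that the entries of $M_1$ and $M_2$ with matching indices are paired, the left-hand side becomes $(M_1\ast M_2)_{i,j}(M_1\ast M_2)_{i',j'}$ and the right-hand side becomes $C_1C_2(M_1\ast M_2)_{i,j'}(M_1\ast M_2)_{i',j}$, which is exactly the defining inequality of $\TN_{2,C_1C_2}$ for the Hadamard product at the indices $i<i'$ and $j<j'$. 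Since the indices were arbitrary, this establishes $M_1\ast M_2\in\TN_{2,C_1C_2}$.

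There is no real obstacle here; the only point that requires a moment of care is the use of nonnegativity to justify multiplying inequalities, which is why the hypothesis that the base semifield is ordered with nonnegative entries matters. The whole argument is a one-line manipulation, which is consistent with the statement being flagged as immediate in the paper.
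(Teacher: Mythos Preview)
Your proof is correct and is precisely the intended argument: the paper does not write out a proof at all, marking the lemma as immediate with a bare \qed, and your direct multiplication of the two defining inequalities is exactly the one-line verification that justifies this. Your remark about needing nonnegativity of the entries to multiply the inequalities is the only nontrivial point, and it is indeed implicit in the paper's setup (these classes refine $\TN_2$, whose $1\times 1$ minors---the entries---are nonnegative).
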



\begin{proof}[Proof of Theorem~\ref{eqrltn}]
By Lemma~\ref{hdmrd}, since $A\in \TN^\trop\subset \TN^\trop_2$,
it is immediate that 
the canonical lift of $A$ is in $\TN_2(\K)=\TN_{2,1}(\K)$, 
and therefore, by \Cref{TNc}, 
$$\puiseuxA\in\TN_{2,(n-1)^2\cdot 1}(\K)\subset
\TN_{2,(\min(n,m)-1)^2}(\K)
\subset \TN(\K).$$

Suppose now that $n=m$, $\puiseuxB\in \TP_{2,(n-1)^2}(\R)$ and that $\per(A)\neq \zero$.
We saw in~\Cref{DD} that $A$ is tropical diagonally dominant,
in particular $A_{ii}\neq \minusinfty$, for all $i\in [n]$.
After subtracting $A_{ii}$ to the $i$th row of $A$, and dividing
the $i$th row of $\puiseuxB$ by $\puiseuxB_{ii}$, we may
assume that $A_{ii}\equiv 0$ and $\puiseuxB_{ii}\equiv 1$,
so that $\puiseuxA_{ii}\equiv 1$. Since $\puiseuxB\in \TP_{2,(n-1)^2}$,
we have $\puiseuxB\in \TP_{2,C}$ for some $C<(n-1)^2$. We can
write, as in \Cref{th-indep},
\[
\puiseuxA = \puiseuxI + \puiseuxB'
\]
where $\puiseuxI$ is the identity matrix, and $\puiseuxB'$ has zero
diagonal entries. From $\puiseuxB\in \TP_{2,C}$, we deduce, 
using~\eqref{e-boundrho}, that $\rho_{\max}(\puiseuxB')\leq 1/C^{1/2}$.
Therefore, $\rho_{\max}(\puiseuxA)\leq 1/C^{1/2}<1/(n-1)$. Then, it
follows from \Cref{th-indep} that $\det \puiseuxA>0$. In particular,
$\puiseuxA\in \GL_n$.

Finally, if~$A\in\TN^{\trop}(\mathbb{R})$ 
and $\puiseuxB\in \TP_{2,(n-1)^2}$, then, using \Cref{TNc} again,
we deduce that~$\puiseuxA\in\TP_{2,(n-1)^2}(\mathbb{K})\subset\TP(\mathbb{K})$.
\end{proof}

\begin{rem}
Let $\puiseuxA$ and $A$ be as in \Cref{eqrltn}.
For all subsets $I\subset [n]$, $J\subset[m]$ with the same
cardinality, we denote by $\puiseuxA_{I,J}$ the $I\times J$ submatrix
of $\puiseuxA$, and use a similar notation for the matrix $A$.
Then, we note that if $A\in \TN^\trop(\R)$, and if 
$\puiseuxB\in \TN_{2,(n-1)^2}(\K)$ is choosen so that $\val \puiseuxB_{ij}$ is identically $0$, then,
\begin{align}
\val \det\puiseuxA_{I,J}   = \per A_{I,J} \enspace .
\label{e-identnew}
\end{align}
\end{rem}

\begin{rem}\label{lem-old}
Property~\eqref{e-identnew} may not hold if we choose a lift
different from the one of \Cref{eqrltn}.
For example 
$$A:=\left(\begin{array}{cc}
   0&0\\
 0&0 
\end{array}\right)=\val\puiseuxA, \qquad
\text{where}\qquad \puiseuxA:=\left(\begin{array}{cc}
  1&1\\
1-t^{-1}&1+t^{-1} 
\end{array}\right),$$
but the tropical permanent $\per A=0$ differs from $\val\det\puiseuxA=2t^{-1}$.
\end{rem}

We get the following corollary of \Cref{eqrltn}.
\begin{theorem}
\label{coro-reverse}
We have $\TN^\trop = \val (\TN)$. \hfill\qed
\end{theorem}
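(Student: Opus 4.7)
The plan is to establish the two inclusions separately. The inclusion $\val(\TN)\subset \TN^\trop$ is already contained in the chain \eqref{rqtrl} of \Cref{rltn}, so nothing remains to verify for this direction.

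For the reverse inclusion $\TN^\trop\subset \val(\TN)$, given $A\in (\TN^\trop)^{n\times m}$, I would construct an explicit lift $\puiseuxA\in \TN(\K)$ with $\val \puiseuxA=A$ by applying the Hadamard-product machinery of \Cref{eqrltn}. That theorem tells us that whenever $\puiseuxB\in \TN_{2,(n-1)^2}(\K)$, the matrix $\puiseuxA:=\puiseuxB * t^{A}$ defined by $\puiseuxA_{ij}=\puiseuxB_{ij}t^{A_{ij}}$ lies in $\TN(\K)$. The one remaining freedom is to choose $\puiseuxB$ so that each of its entries has valuation $0$; then the identity $\val \puiseuxA_{ij}=\val \puiseuxB_{ij}+A_{ij}=A_{ij}$ shows that $\puiseuxA$ is a lift of $A$ (using the convention $t^{\minusinfty}=0$ for entries where $A_{ij}=\minusinfty$).

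The concrete step is therefore to exhibit one such $\puiseuxB$ with valuation-zero entries. I would take a Vandermonde matrix with real parameters, namely $\puiseuxB_{ij}:=\lambda_i^{\,j-1}$ with $\lambda_i=C^{i}$ for a real constant $C>(n-1)^2$ (and trimmed to the appropriate $n\times m$ shape). Since the $\lambda_i$ lie in $\R_{>0}\subset \Kpos$, all entries are positive reals and hence have valuation $0$. The paper already observed, just before the statement of \Cref{TNc}, that such a Vandermonde matrix belongs to $\TP_{2,C}\subset \TN_{2,(n-1)^2}(\K)$ as soon as $\lambda_{i+1}/\lambda_i>C\geq (n-1)^2$, which holds by construction. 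Feeding this $\puiseuxB$ into \Cref{eqrltn} delivers the desired lift.

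There is no real obstacle here: the theorem is essentially a one-line corollary of \Cref{eqrltn} once the existence of a suitable valuation-zero lift $\puiseuxB$ is recorded. The only mildly delicate point is making sure the Hadamard factor $\puiseuxB$ can simultaneously (i) carry the $\TN_{2,(n-1)^2}$ property demanded by \Cref{eqrltn} and (ii) have all entries of valuation $0$, which forces us to pick real (not Puiseux) parameters in the Vandermonde construction, as indicated above.
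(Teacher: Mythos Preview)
Your proposal is correct and follows essentially the same route as the paper: one inclusion from \Cref{rltn}, the other by invoking \Cref{eqrltn} with a real Vandermonde matrix of valuation-zero entries as the Hadamard factor. The only quibble is that with $\lambda_i=C^i$ you get $\lambda_{i+1}/\lambda_i=C$, not $>C$, so strictly speaking this yields $\TN_{2,C}$ rather than $\TP_{2,C}$; but since $C>(n-1)^2$ this still lands in $\TN_{2,(n-1)^2}$, which is all that \Cref{eqrltn} requires.
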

\begin{proof}
We showed in \Cref{rltn} that $\val(\TN) \subset \TN^\trop$. Conversely,
suppose that $A\in \TN^\trop$. Let us $C:=(n-1)^2$, and
take any matrix $B\in \TN_{2,C}(\R)\cap \R_{>0}^{n\times m}$ (we already observed that such matrices
exists for all $C$, for instance, $B$ may be a Vandermonde matrix). Then,
it follows from \Cref{eqrltn} that $\puiseuxA:=B*t^A\in \TN(\K)$,
and $\val(\puiseuxA)=A$. 
\end{proof}
The following is an immediate consequence of \Cref{coro-reverse}.
\begin{corollary}\label{coro-fin}
We have $\TN^\trop (\R)= \val (\TN(\K^*))$. 
\end{corollary}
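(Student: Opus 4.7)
The plan is to derive this as an immediate consequence of \Cref{coro-reverse} (which asserts $\TN^\trop = \val(\TN)$) by simply tracking which entries remain nonzero.

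For the inclusion $\val(\TN(\K^*)) \subset \TN^\trop(\R)$: take any $\puiseuxA \in \TN(\K^*)$, so every entry $\puiseuxA_{ij}$ lies in $\Kpos$. Then $\val \puiseuxA_{ij} \in \R$ (finite, not $-\infty$), and by \Cref{rltn} the matrix $\val \puiseuxA$ lies in $\TN^\trop$. Hence $\val \puiseuxA \in \TN^\trop(\R)$.

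For the reverse inclusion $\TN^\trop(\R) \subset \val(\TN(\K^*))$: given $A \in \TN^\trop(\R)$, we replay the construction used in the proof of \Cref{coro-reverse}. That is, set $C := (n-1)^2$, pick any matrix $B \in \TN_{2,C}(\R) \cap \R_{>0}^{n \times m}$ (such matrices exist, e.g.\ a Vandermonde matrix $(\lambda_i^{j-1})$ with strictly positive $\lambda_i$ sufficiently spaced), and form the Hadamard product lift $\puiseuxA := B * t^A$ defined as in~\eqref{e-def-hadamard}. By \Cref{eqrltn} we have $\puiseuxA \in \TN(\K)$, and clearly $\val(\puiseuxA) = A$.

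The only step beyond \Cref{coro-reverse} is to verify that every entry of $\puiseuxA$ is nonzero. This is immediate: by assumption every $A_{ij}$ is finite (since $A \in \TN^\trop(\R)$), so $t^{A_{ij}} \in \Kpos$; since also $B_{ij} > 0$, we get $\puiseuxA_{ij} = B_{ij} t^{A_{ij}} \in \Kpos \subset \K^*$. Therefore $\puiseuxA \in \TN(\K^*)$, which completes the proof. There is no significant obstacle here — the result is essentially a bookkeeping refinement of \Cref{coro-reverse}.
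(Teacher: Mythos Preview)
Your proof is correct and matches the paper's approach: the paper merely states that the corollary is an immediate consequence of \Cref{coro-reverse}, and you have correctly unpacked that immediacy by checking that the lift constructed there has only nonzero entries when $A$ has only finite entries.
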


\begin{rem}\label{rem-new2}
Together with \Cref{prop-new}, \Cref{coro-reverse} provides a polyhedral characterization
of $\val (\TN)$.
\end{rem}
\begin{rem}
In~\cite[Theorem~1.6.4]{Fallat&Johnson} it is shown that every double echelon
pattern is the Boolean pattern of some~$\TN$ matrix. The above result can
be recovered as a corollary of \Cref{eqrltn}. Indeed, if $P$ is a Boolean
matrix, we can define the tropical
matrix $A$ such that $A_{ij}=0$ if $P_{ij}=1$, and $A_{ij}=\minusinfty$
if $P_{ij}=0$. If $P$ has a double echelon pattern, then $A$
belongs to $\TN^\trop_2=\TN^\trop$. Then, choosing any $\puiseuxB\in \TP_{2,(n-1)^2}(\R)$,
we get that the matrix $\puiseuxA$ such that $\puiseuxA_{ij}=t^{A_{ij}}\puiseuxB_{ij}$ belongs to $\TN(\K)$. By substituting $t$ by a suitably large
real number, we end up with a matrix which is in $\TN(\R)$ and which
has the same pattern as the matrix $P$. 

Conversely, 
\Cref{TNDE} implies that if $\puiseuxA$ is a $\TN$ matrix without zero row or column, then, the pattern of $\puiseuxA$ is double echelon, 
recovering~\cite[Coro.~1.6.5]{Fallat&Johnson}. 
\end{rem}

\begin{rem}\label{rem-semialgebraic}
The relation between the set of double echelon matrices and the set of tropical totally nonnegative matrices can be understood in terms
of nonarchimedean amoebas of a semialgebraic
set, defined in \S\ref{subsec-nonarch}.
Consider the semi-algebraic set $\mathcal{S}\subset \R^{n\times m}$ consisting of totally
nonnegative matrices with no zero row or column, over a field $\mathscr{K}$.
Let us first take $\mathscr{K}=\R$, equipped with the trivial valuation, $\val$, which
sends any non-zero element to $0$, and $0$ to $-\infty$.
Then, the results of~\cite{Fallat&Johnson} which we just recalled
mean that $\val(\mathcal{S})$ is the set of 
matrices with a double echelon pattern. If we take $\mathscr{K}=\K$,
the field of Puiseux series, with the standard nonarchimedean valuation $\val$,
and if we consider $\mathcal{S}:=\TP(\K)$, \Cref{trop2} characterizes the amoeba
of $\val(\mathcal{S})$. 
In this way, classical results made in combinatorial matrix theory,
concerning Boolean patterns, appear to be related with tropical results:
they all concern nonarchimedean amoebas, albeit with different valuations.
\end{rem}

\begin{cor}\label{diagcor} $\val(\TP)=\val(\TN(\mathbb{K}^*))\enspace.$
\end{cor}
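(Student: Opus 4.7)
The plan is to combine \Cref{coro-fin} with the lifting statement in the last clause of \Cref{eqrltn}.

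The inclusion $\val(\TP)\subset \val(\TN(\mathbb{K}^*))$ is immediate: any totally positive matrix has all its entries positive (each entry is a $1\times 1$ minor), hence nonzero, so $\TP\subset\TN(\mathbb{K}^*)$, and the claim follows by applying $\val$.

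For the reverse inclusion $\val(\TN(\mathbb{K}^*))\subset \val(\TP)$, I would invoke \Cref{coro-fin} to identify $\val(\TN(\mathbb{K}^*))$ with $\TN^\trop(\mathbb{R})$, so that it suffices to construct, for an arbitrary $A\in\TN^\trop(\mathbb{R})$, a lift $\puiseuxA\in\TP(\mathbb{K})$ with $\val(\puiseuxA)=A$. To this end, I would fix a matrix $\puiseuxB\in\TP_{2,(n-1)^2}$ whose entries all have valuation~$0$; for instance, a real Vandermonde matrix $\puiseuxB_{ij}=\lambda_i^{j-1}$ with $0<\lambda_1<\dots<\lambda_n$ and consecutive ratios sufficiently large, as already discussed before \Cref{TNc}. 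Then the Hadamard product $\puiseuxA:=\puiseuxB \ast t^A$ satisfies $\val(\puiseuxA)=A$ by the choice of $\puiseuxB$, and the last clause of \Cref{eqrltn} guarantees $\puiseuxA\in\TP(\mathbb{K})$, as desired.

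There is essentially no obstacle here: all the technical difficulty has already been absorbed into \Cref{eqrltn}, whose ``in addition'' clause was specifically tailored to produce totally positive lifts of real tropical totally nonnegative matrices. \Cref{diagcor} simply packages that result together with the description of $\val(\TN(\mathbb{K}^*))$ provided by \Cref{coro-fin}.
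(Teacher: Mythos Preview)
Your proof is correct and follows essentially the same route as the paper: the trivial inclusion $\TP\subset\TN(\mathbb{K}^*)$ gives one direction, and for the other you pass through $\TN^\trop(\mathbb{R})$ (via \Cref{coro-fin}, equivalently \Cref{coro-reverse}) and then invoke the last clause of \Cref{eqrltn} to produce a $\TP$ lift. Your explicit choice of a real Vandermonde matrix $\puiseuxB\in\TP_{2,(n-1)^2}$ with valuation-$0$ entries is exactly the kind of lift the paper has in mind, and you are right to note that this ensures $\val(\puiseuxA)=A$.
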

\begin{proof}
Trivially, $\val(\TP)\subset\val(\TN(\mathbb{K}^*))$.
By \Cref{coro-reverse}, we have~$\val(\TN(\mathbb{K}^*))\subset
\TN^\trop(\mathbb{R})$, and by the last statement in~\Cref{eqrltn},
for every matrix $A\in \TN^\trop(\mathbb{R})$, we can find 
$\puiseuxA\in \TP$ such that $\val \puiseuxA=A$,
showing that $\TN^\trop(\mathbb{R})\subset \val(\TP)$.
\end{proof}

We  conclude from 
Theorem~\ref{rltn} and Theorem~\ref{eqrltn} that the 
set~$\{A\in\TN^\trop:\per(A)\ne\zero\}$ coincides with the set of matrices 
generated by tropical Jacobi matrices.

The following theorem summarizes our results.
\begin{customthm}{C}\label{thC}
\label{diagramtp}
The different classes of matrices considered so far
satisfy the relations shown in \Cref{Table1,Table2}.
\end{customthm}
\begin{proof} 
This follows from Theorem~\ref{trop2}, Proposition~\ref{DD}, 
Theorem~\ref{SC}, Theorem~\ref{rltn},   Theorem~\ref{eqrltn}, \Cref{coro-reverse} and Corollary~\ref{diagcor}.
\end{proof}

\newcommand{\myline}{\rule{14cm}{0.4pt}}
\begin{table}[ht]
\myline\\[1mm]
$$\begin{array}{ccccccccccc}
 \TP^\trop             &  \subsetneqq    &   {\val(\TP) }                  &   =& \val(\TN(\mgroup{\mathbb{K}}))   &  \subsetneqq  &      \val(\TN)        &                       = &     \TN^\trop      \\
&&&&&&&&\\ 
   \parallel              &                        &                                      &      &                     {\parallel }                 &                       &                              &                        &       ||                    \\  
&&&&&&&&\\  
\TP^\trop_2          &                        &                                      &       &           \TN^\trop(\mathbb{R})      &                      &                               &                         & \TN^\trop_2          \\ 
\text{(strict Monge)}&&&&\text{(Monge)}&&&&\\ 
&&&&&&&&\\
\nshortparallel \cap&                       &                                      &       &              \parallel                           &                     &                               &                           &     \nshortparallel \cap             \\  
&&&&&&&&\\
      \NDD               &                       &                                      &       &          \text{Staircase matrices}     &                      &                              &                            &                \DD                \\        
&&&&(\text{upto diagonal scaling})&&&&\end{array}$$         
\caption{Images by the valuation of nonarchimedean totally nonnegative matrices compared with tropical totally nonnegative matrices}
\myline
\label{Table1}
\end{table}
\begin{table}[ht]
\myline\\[1mm]
$$\begin{array}{ccccccc}
\val(\TP)     &   \subsetneqq   &     \val(\GL\cap\TN)      &=&  \langle\text{tropical~Jacobi~matrices}\rangle    &   \subsetneqq   &     \val(\TN) \\
&&&&&&\\
      &&           \parallel  &                 &                            &                 &                   \\
&&&&&&\\
       &&        \{A\in\TN^\trop:\per(A)\ne\zero\}    &            
    &    
&                 &               \\
 &&&&&&
\end{array}$$            
\caption{The special case of square matrices}
\label{Table2}
\myline
\end{table}

\begin{rem}\label{rem-complem}
The results of \Cref{diagramtp} imply that
\begin{align}
\val(\GL\cap \TN(\K^*)) = 
\val(\GL\cap \TN)\cap \R^{n\times n} = 
\val(\TP)\label{e-finitestratum}
\end{align}
Indeed, we have $\TP\subset \GL\cap \TN(\K^*)$, and
so, $\val(\TP)\subset \val(\GL\cap \TN(\K^*))$. 
The inclusion 
$\val(\GL\cap \TN(\K^*)) \subset
\val(\GL\cap \TN)\cap \R^{n\times n}$
is trivial. Now, if 
$M\in \val(\GL\cap \TN)\cap \R^{n\times n}$,
we have
$M=\val \mathbf{M}$ with $\mathbf{M}\in \TN(\K^*)
= \val(\TP)$, showing that
$\val(\GL\cap \TN)\cap \R^{n\times n}\subset\val(\TP)$.
\end{rem}
\begin{rem}\label{rem-stratum}
Any subset $S$ of $\rmax^{n\times n}$ can be decomposed in {\em strata}, consisting of those matrices in the set $S$ that share the same pattern (i.e., whose
finite entries are in prescribed positions). 
It follows from~\Cref{rltn}
and~\eqref{e-finitestratum}
that
\[ 
\langle \text{tropical Jacobi matrices} \rangle  \cap \R^{n\times n}= 
\text{Monge matrices}
\]
providing a polyhedral characterization of the main stratum
of the semigroup $\langle \text{tropical Jacobi matrices} \rangle$, consisting
of matrices with finite entries. 
This semigroup has other strata which it may be interesting
to characterize.
\end{rem}

\begin{rem}\label{rk-jacob-diff-gaussian}
Recall that an (elementary) \textit{Gaussian matrix}
 differs from  the identity matrix by at most one entry, that is non-zero.
Analogously, an (elementary) \textit{tropical Gaussian matrix}
 differs from  the tropical identity matrix by at most one entry, that is finite. In particular, every (tropical) Jacobi matrix 
is a (tropical) Gaussian matrix.
We characterized in \Cref{diagramtp} the multiplicative semigroup generated by tropical Jacobi matrices. It is an interesting open question to characterize
the semigroup $\langle \text{Tropical Gaussian matrices}\rangle$.
The case of of $3\times 3$ matrices is solved in~\cite[Lemma~4.5]{FTM}, stating that a matrix $A\in\rmax^{3\times 3}$ is not factorizable  as
a product of an invertible matrix and of Gaussian matrices
if and only if there exists a
fixed point-free permutation $\sigma\in S_3$
satisfying
$$A_{i,\sigma(i)}<A_{i,k}A_{k,\sigma(i)},\ k\ne i,\sigma(i),\ \ \forall i.$$
\end{rem}

\section{Characteristic polynomial and eigenvalues of tropical totally nonnegative matrices}\label{cpev} 
Gantmacher and Krein established the following property: a totally nonnegative 
matrix has nonnegative eigenvalues, which are distinct and positive 
when the matrix is totally positive, see~\cite{Gantmacher&Krein}
and also~\cite[\S5.2]{Fallat&Johnson} for a more recent treatment.
The field $\mathbb{K}$ is real closed and
the theorem of Tarski,
which we recalled in \Cref{subsec-nonarch},
shows that
the first order theory of real closed fields is complete.
It follows that the property found
of Gantmacher and Krein remains valid
for matrices with entries in $\mathbb{K}$.


We shall see that the tropical eigenvalues of 
a $\TN^\trop$ matrix $A$ coincide with its diagonal entries, and that they coincide with 
the images by the valuation of the eigenvalues of an arbitrary  $\TN$ lift of $A$. 
However, the tropical eigenvalues are not necessarily distinct when $A\in\TP^\trop$.

We recall that the characteristic polynomial of~$\puiseuxA\in
\mathbb{K}^{n\times n}$ is defined by \begin{equation}\label{CPC}\puiseuxf_{\puiseuxA}
(\lambda)=\det(\lambda \puiseuxI-\puiseuxA)=\lambda^n+
\sum_{i=0}^{n-1}(-1)^{n-i}\puiseuxalpha_{n-i}\lambda^i,\end{equation}
where $\puiseuxI$ is the identity matrix
and $\puiseuxalpha_k$ is the sum of all the $k\times k$ 
principal minors of~$\puiseuxA$, with~$\puiseuxalpha_0:=1$.

Similarly, the tropical characteristic polynomial of~$A\in\rmax^{n\times n}$
 is defined as
\begin{equation}\label{CPT}f_A(x)=\per\big(x\odot I\oplus A\big)=x^{\odot n}\oplus
\bigoplus_{i=0}^{n-1}  
a_{n-i}\odot x^{\odot i},\end{equation} where~$I$ is the tropical identity matrix,~$a_k$ is the maximum between 
the weights of all~$k\times k$   principal tropical minors of~${A}$, and~$a_0:=\unit$.

In~\cite{MA} Butkovic discussed connections between max-algebraic problems and combinatorial optimization  problems. He observed that computing the coefficient $a_k$ of the tropical characteristic polynomial is equivalent to a modification of the optimal assignment problem, the \textit{optimal principal submatrix problem}, with
an unsettled complexity in general. The next result shows that in the special case of $\TN^{\trop}$ matrices, the $a_k$ are easy to compute.
\begin{pro}
Suppose that $A\in \TN^{\trop}$. Then, the coefficient $a_k$ of the tropical
characteristic polynomial of $A$ coincides with the product of the $k$ largest
diagonal entries of $A$.
\end{pro}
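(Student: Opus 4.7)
The plan is to combine two ingredients already established in the excerpt: the definition of the coefficients $a_k$ of the tropical characteristic polynomial, and the diagonal dominance of tropical totally nonnegative matrices proved in \Cref{DD}.

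First, I would unwind the definition. By~\eqref{CPT}, $a_k$ is the maximum, over all $k$-subsets $I \subset [n]$, of the tropical permanent $\per(A_{I,I})$ of the principal submatrix of $A$ indexed by $I$. Since the class $\TN^{\trop}$ is closed under taking submatrices with ordered index sets (a minor of a submatrix is a minor of $A$), each such principal submatrix $A_{I,I}$ is itself in $\TN^{\trop}$.

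Next, the key observation: by \Cref{DD}, $\TN^{\trop} \subset \DD$, so every principal submatrix of a tropical totally nonnegative matrix is tropical diagonally dominant. Applied to $A_{I,I}$, this gives
\[
\per(A_{I,I}) \;=\; \sum_{i \in I} A_{i,i} \enspace .
\]
Substituting into the formula for $a_k$ yields
\[
a_k \;=\; \max_{\substack{I \subset [n] \\ |I|=k}} \sum_{i \in I} A_{i,i} \enspace ,
\]
and this maximum is plainly attained by the $k$-subset consisting of the indices of the $k$ largest diagonal entries of $A$, proving the claim.

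There is no real obstacle here: the entire content of the statement is the reduction, via diagonal dominance of all principal submatrices, of the optimal principal submatrix problem (generally hard, as Butkovi\v{c} notes) to a trivial greedy selection. The only thing to double-check is that the stability of $\TN^{\trop}$ under extraction of principal submatrices is immediate from the definition of tropical total nonnegativity in terms of all tropical minors.
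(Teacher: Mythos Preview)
Your proof is correct and follows essentially the same route as the paper: use \Cref{DD} to reduce each $\per(A_{I,I})$ to the tropical product of the diagonal entries indexed by $I$, then observe that the maximum over $k$-subsets is attained by the $k$ largest diagonal entries. One minor simplification: since $\DD$ is defined as the set of matrices all of whose submatrices are tropical diagonally dominant, the inclusion $A\in\DD$ from \Cref{DD} already gives $\per(A_{I,I})=\sum_{i\in I}A_{i,i}$ directly, so the detour through closure of $\TN^{\trop}$ under principal submatrices is unnecessary.
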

\begin{proof}
Let us ordering the diagonal entries of $A$ from the smallest to greatest, we denote  by~$A_{i_j,i_j}$ the~$j^{th}$ greatest diagonal entry of $A$. That is, $A_{i_1,i_1}
\geq A_{i_2,i_2}\geq \dots\geq A_{i_n,i_n},$  where $\{i_1,\dots,i_n\}=[n].$  
From~\Cref{DD},~$A\in \DD,$ and therefore,  for every $I\subset [n]$
of cardinality $k$, we have $\per A_{II} = \bigodot_{i\in I} A_{ii}$.
Since $a_k$ is the maximum of all the terms of this form, we deduce that
\begin{equation}\label{cc}a_k = A_{i_1i_1} \odot \dots \odot A_{i_ki_k} \enspace .\end{equation} 
\end{proof}


The tropical roots of a tropical polynomial function are generally defined as the points of which the maximum of the monomials appearing in this polynomial is achieved twice at least. In particular, a finite tropical root is a nondifferentiability point of this function. We define tropical eigenvalue by specializing this definition
to the characteristic polynomial. 
\begin{df}
We say that~$\eta\in \rmax$ is a {\em tropical eigenvalue} of~$A$ if 
$$\exists i\ne j\ \text{ s.t. }f_A(\eta)= a_{n-i}\odot \eta^{\odot i}\oplus a_{n-j}\odot \eta^{\odot j}.$$
 We say that $a_k$ is \textit{active} if there exists an eigenvalue $\eta$ such that
$$f_A(\eta)= a_{k}\odot \eta^{\odot n-k}.$$
The \textit{multiplicity} of a tropical eigenvalue~$\eta \neq \minusinfty$ is the difference between the right 
derivative and left derivative of~$f_A$ at this  nondifferentiability point. 
If $\minusinfty$ is a tropical eigenvalue, its multiplicity is defined as the limit of of the derivative of $f_A$ at  point $t$, as $t\to \minusinfty$.
\end{df}
Observe that counting multiplicities, an~$n\times n$ tropical matrix~$A$ has exactly~$n$ eigenvalues.

\begin{pro}\label{eede} If~$A\in\TN^{\trop},$ then $a_k$ is \textit{active} for all $k\in\{0\}\cup[n] ,$ 
and the tropical eigenvalues of $A$ (counting multiplicities)
are precisely its diagonal entries.\end{pro}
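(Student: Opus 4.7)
My plan is to exploit the explicit formula for the coefficients $a_k$ provided by the preceding proposition and to analyze the resulting tropical characteristic polynomial as a maximum of affine functions of $x$. Let $d_1\geq d_2\geq\dots\geq d_n$ denote the diagonal entries of $A$ sorted in decreasing order (in $\rmax$); the preceding proposition yields $a_0=\unit$ and $a_k=d_1+\dots+d_k$ (arithmetic sum) for $k\in[n]$. Substituting into~\eqref{CPT}, the tropical characteristic polynomial becomes
\[
f_A(x)=\max_{0\leq k\leq n}\bigl(d_1+\dots+d_k+(n-k)x\bigr)\enspace,
\]
a maximum of $n+1$ affine pieces with slopes $n,n-1,\dots,0$ and explicit intercepts.

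The key computation is to compare consecutive pieces: the $k$-th and $(k+1)$-th pieces differ by exactly $x-d_{k+1}$, so the $k$-th piece dominates the $(k+1)$-th if and only if $x\geq d_{k+1}$. Since $d_1\geq\dots\geq d_n$, this gives the complete description of $f_A$: the $k$-th piece is the pointwise maximum on the (possibly degenerate) interval $[d_{k+1},d_k]$, and consecutive pieces tie precisely at $x=d_{k+1}$. In particular, each coefficient $a_k$ is active, witnessed by the eigenvalue $\eta=d_k$ at which pieces $k$ and $k-1$ meet (with the obvious convention at the extreme indices).

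To match eigenvalues with multiplicities, one reads the jumps of the slope of $f_A$: if a finite value $d$ appears $m$ times among the diagonal entries, then the slope of $f_A$ increases by $m$ as $x$ crosses $d$, so $d$ is a tropical eigenvalue of multiplicity $m$. For the $-\infty$ eigenvalue, if $r$ denotes the number of finite diagonal entries, then for $x$ sufficiently negative the maximum in $f_A$ is achieved by the $r$-th piece (slope $n-r$), whence the limit of the derivative as $x\to-\infty$ equals $n-r$, matching the number of diagonal entries equal to $-\infty$. Summing the multiplicities returns the whole diagonal, as claimed.

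The main obstacle is the bookkeeping required when diagonal entries coincide and at the $-\infty$ boundary. In particular, the coefficients $a_k=-\infty$ (for $k>r$) still need to be certified as active; this follows from the observation that $f_A(-\infty)=-\infty$ as soon as $a_n=-\infty$, so any piece evaluating to $-\infty$ at $\eta=-\infty$ witnesses activity. Once these boundary conventions are in place, the remainder of the argument reduces to the elementary slope comparison above.
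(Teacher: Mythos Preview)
Your proof is correct and follows essentially the same approach as the paper: both use the preceding proposition to obtain $a_k=d_1+\dots+d_k$ for the sorted diagonal entries, then read off the roots from the resulting polynomial. The only presentational difference is that the paper compresses your piecewise comparison of consecutive affine pieces into the tropical factorization $f_A(x)=(x\oplus d_1)\odot\cdots\odot(x\oplus d_n)$, which immediately yields the roots with multiplicities and the activity of each $a_k$; your more explicit slope-jump analysis (including the $-\infty$ bookkeeping, which the paper leaves implicit) amounts to verifying this factorization by hand.
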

\begin{proof} 
It follows from \eqref{cc} that
\begin{align*}
f_A(x) &= x^{\odot n} \oplus  A_{i_1i_1}x^{\odot n-1} \oplus \dots \oplus A_{i_1i_1}\odot\dots\odot A_{i_ni_n}
\enspace .
\end{align*}
Since 
$A_{i_1,i_1} \geq \dots \geq A_{i_ni_n}$, the above polynomial
function can be rewritten as
\[
f_A(x) = (x\oplus A_{i_1i_1})\odot\dots \odot(x\oplus A_{i_ni_n}) \enspace.
\]
Therefore, the $k$th tropical eigenvalue of $A$ is given by $\eta_k=A_{i_ki_k}$.
We also deduce that
\[ f_A(A_{i_k,i_k})=a_k \odot A_{i_k,i_k}^{\odot n-k} \enspace, 
\]
showing that $a_k$ is active.
\end{proof}




Proposition~\ref{cc&ee} below is illustrated by the following example.

\begin{exa}\label{ev} The characteristic polynomial~$\puiseuxf_{\puiseuxA}(\lambda)
=\big(\lambda-(t^{2}+t)\big)\big(\lambda-(t^{2}-t)\big) $ of 
 $$\puiseuxA=\left(\begin{array}{cc}t^{2}&t\ \\\
t\ \ &t^{2}\end{array}\right)\in \TP,$$  has positive distinct roots.
However, considering its valuation $$\val(\puiseuxA)=A=\left(\begin{array}{cc}2&1\\
1&2\end{array}\right)\in \TP^{\trop},$$ we get the tropical characteristic 
polynomial~$f_{A}(x)=x^{\odot 2}\oplus 2\odot x\oplus 4,$ with a tropical eigenvalue~$2$ of
 multiplicity~$2$. Nevertheless,   
the tropical coefficients and tropical eigenvalues  are the corresponding images by the valuation of the  
coefficients and  eigenvalues of~$\puiseuxA$. 
This may fail for $\puiseuxA\in \TP$ with $\val(\puiseuxA)\notin 
\TP^\trop$, as seen by the following example.

The valuation of the coefficients of the characteristic polynomial~$\lambda^2-(t+2)\lambda+1$ of 
 $$\puiseuxA=\left(\begin{array}{cc}t+1&t\ \\\
1&1\end{array}\right)\in \TP,$$  do not coincide with the coefficients of the 
tropical characteristic polynomial~$x^{\odot 2}\oplus 1\odot x\oplus 1$
of $$\val(\puiseuxA)=\left(\begin{array}{cc}1&1\\
0&0\end{array}\right)\notin \TP^{\trop}.$$ 
\end{exa}


Denote the characteristic polynomial of~$A\in (\TP^{\trop})^{n\times n}$ 
as in~\eqref{CPT}, with the 
tropical eigenvalues~$\eta_1\geq\dots \geq\eta_n$. 
By Theorem~\ref{rltn2}, we know that   $A\in\TP^{\trop}$ if and only if 
 every lift $\puiseuxA$ of $A$ is in $\TP$.
We denote the characteristic polynomial of $\puiseuxA$ as in~\eqref{CPC},
and we denote by $\puiseuxrho_1>\dots >\puiseuxrho_n$
the eigenvalues of $\puiseuxA$. 

The next result characterizes the valuation of the coefficients
of the characteristic polynomial and of the eigenvalues
of a totally positive matrix $\puiseuxA$ with entries in $\K$.
\begin{pro}\label{cc&ee}
If $\puiseuxA\in \TP$ is such that  $A=\val (\puiseuxA)\in\TP^{\trop}$,
we have
\begin{eqnarray}
\val(\puiseuxalpha_k)=a_{k}&,&\forall k\in\{0\}\cup[n].\label{1}\\
\val(\puiseuxrho_i)=\eta_i\ &,&\forall i\in[n].\label{2}\end{eqnarray}
\end{pro}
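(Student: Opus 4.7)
The plan is to establish (1) and (2) in sequence, deriving (2) from (1) via Vieta's formulas combined with Gantmacher--Krein.

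For (1), I would use the fact that $\puiseuxalpha_k$ is the classical sum of the principal $k\times k$ minors of $\puiseuxA$. Since $\puiseuxA \in \TP$, every such minor $\det \puiseuxA_{I,I}$ is a \emph{positive} element of $\K$. The hypothesis $A \in \TP^{\trop}$ means each principal submatrix $A_{I,I}$ is sign-nonsingular with only even permutations of maximum weight, so the leading monomials of $\det \puiseuxA_{I,I}$ have positive sign and cannot cancel; this yields $\val(\det \puiseuxA_{I,I}) = \per(A_{I,I})$. Convexity of the valuation (equation~\eqref{e-morphism}) applied to the sum of positive summands then gives
\[
\val(\puiseuxalpha_k) = \max_{|I|=k} \val(\det \puiseuxA_{I,I}) = \max_{|I|=k} \per(A_{I,I}) = a_k,
\]
which is~\eqref{1}.

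For (2), I would invoke Gantmacher--Krein (valid over $\K$ by Tarski's transfer principle, as discussed just before the statement): the eigenvalues of $\puiseuxA \in \TP$ are positive and distinct, so we may order them as $\puiseuxrho_1 > \puiseuxrho_2 > \dots > \puiseuxrho_n > 0$. By the order-preservation property~\eqref{val-op}, this gives $\val(\puiseuxrho_1) \geq \val(\puiseuxrho_2) \geq \dots \geq \val(\puiseuxrho_n)$. Vieta's formulas applied to the factorization $\puiseuxf_{\puiseuxA}(\lambda) = \prod_i (\lambda - \puiseuxrho_i)$ give $\puiseuxalpha_k = \sum_{|I|=k}\prod_{i\in I}\puiseuxrho_i$, a sum of positive terms in $\K$. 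Again by~\eqref{e-morphism},
\[
\val(\puiseuxalpha_k) = \max_{|I|=k}\sum_{i\in I} \val(\puiseuxrho_i) = \sum_{i=1}^{k} \val(\puiseuxrho_i),
\]
the maximum being attained by $I=\{1,\dots,k\}$ because of the ordering on the $\val(\puiseuxrho_i)$.

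Combining this with part (1) and with \Cref{eede} (which identifies $\eta_k$ with the $k$-th largest diagonal entry of $A$, and states that $a_k = \eta_1 \odot \cdots \odot \eta_k$), we obtain, for every $k \in [n]$,
\[
\sum_{i=1}^{k}\val(\puiseuxrho_i) = a_k = \sum_{i=1}^{k} \eta_i,
\]
and subtracting the identity for $k-1$ from the one for $k$ yields $\val(\puiseuxrho_k) = \eta_k$, which is~\eqref{2}.

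The only real subtlety is the justification that $\val(\det \puiseuxA_{I,I}) = \per(A_{I,I})$ and that no cancellation occurs in the sums defining $\puiseuxalpha_k$ and the elementary symmetric polynomials in the $\puiseuxrho_i$; this is where tropical positivity (ensuring sign-nonsingularity of all principal minors) and classical total positivity (ensuring positivity of the $\puiseuxrho_i$) are both essential. Everything else is a direct application of the valuation morphism property~\eqref{e-morphism} on nonnegative elements.
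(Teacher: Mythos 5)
Your proof is correct. Part~\eqref{1} follows essentially the paper's own route: both arguments reduce $\val(\puiseuxalpha_k)$ to $\max_{|I|=k}\val(\det\puiseuxA_{I,I})$ using convexity of the valuation on sums of positive elements, and then use tropical positivity of the principal minors (no cancellation among the leading, even-permutation terms) to identify $\val(\det\puiseuxA_{I,I})$ with $\per(A_{I,I})$. For part~\eqref{2}, however, you take a genuinely different path. The paper invokes the Newton-polygon theorem for polynomials over valued fields: the valuations of the roots of $\puiseuxf_{\puiseuxA}$ are the slopes of the upper concave hull of the points $(i,\val(\puiseuxalpha_{n-i}))$, which by Legendre--Fenchel duality are the tropical roots of $x\mapsto\max_i\{ix+\val(\puiseuxalpha_i)\}$; the equality $\val(\puiseuxalpha_i)=a_i$ from part~\eqref{1} then concludes. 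You instead use Gantmacher--Krein (transferred to $\K$ by Tarski, exactly as the paper sets up before the statement) to order $\puiseuxrho_1>\dots>\puiseuxrho_n>0$, apply Vieta and the valuation-morphism property to the positive elementary symmetric functions to get $\val(\puiseuxalpha_k)=\sum_{i\leq k}\val(\puiseuxrho_i)$, and telescope against $a_k=\eta_1\odot\dots\odot\eta_k$ from~\eqref{cc}. Your route is more elementary in that it avoids the Newton-polygon black box, at the price of leaning harder on the total-positivity structure (reality, positivity and distinctness of the $\puiseuxrho_i$, plus the explicit diagonal formula for $a_k$); the paper's route is more robust, since the Newton-polygon argument deduces~\eqref{2} from~\eqref{1} without needing the eigenvalues to lie in $\K$ at all. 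One cosmetic point: the identity $a_k=\eta_1\odot\dots\odot\eta_k$ is established in the proposition preceding Proposition~\ref{eede} rather than in Proposition~\ref{eede} itself, but that is a citation detail, not a gap.
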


\begin{proof}

Recall that applying the valuation to strict inequalities
over~$\mathbb{K}_{\geq 0}$ yields weak inequalities.

We prove~\eqref{1}. 
In every submatrix $M$ of the lift~$\puiseuxA$ of~$A$, the valuation of  product 
of diagonal entries of $M$ is the only one of maximal weight in $\val(\det(M))$.
 Since~$\puiseuxalpha_k$ 
is the sum  of~$k\times k$ principal minors of~$\puiseuxA$, we get 
that~$\val(\puiseuxalpha_0)=a_0$ and for every~$k\in[n]$
\begin{eqnarray*}
&\val(\puiseuxalpha_k)&
=\max_{\substack{I\subset[n]:\\|I|=k}}\val\Bigg(\sum_{\sigma\in S_{I}} \sign(\sigma)\prod_{i\in I}
\puiseuxA_{i,\sigma(i)}\Bigg)=
\max_{\substack{I\subset[n]:\\|I|=k}}
\left\{\val\left(\prod_{i\in I}\puiseuxA_{i,i}\right)\right\}\\
&&
=\max_{\substack{I\subset[n]:\\|I|=k}}
\left\{\sum_{i\in I}\val(\puiseuxA_{i,i})\right\}=
\bigoplus_{\tiny{\begin{array}{c}I\subset[n]:\\|I|=k\end{array}}}
\bigodot_{i\in I}A_{i,i}=\bigoplus_{\substack{I\subset[n]:\\|I|=k}}
\bigoplus_{\sigma\in S_{I}}\bigodot_{i\in I}A_{i,\sigma(i)}
=a_k.\\
&&\end{eqnarray*}

We now prove~\eqref{2}. By Puiseux Theorem, we know that the
image by the valuation of the roots of the polynomial 
$\puiseuxf_\puiseuxA(\lambda)$ are precisely the slopes of the 
associated Newton polytope, defined
as the upper boundary of the concave hull of the points $(i,\val (\puiseuxalpha_{n-i}))$, 
$i=0,\dots, n$. By Legendre-Fenchel duality, these are precisely the
tropical roots of
\begin{align} x\mapsto \max_{i\in[n]}\{ix+\val(\puiseuxalpha_i)\} \enspace,
\label{dual-puiseux}
\end{align}
counted with multiplicities.
In general~$\val(\puiseuxalpha_i)\leq a_i$, but   equality holds 
when~$A\in \TP^\trop$, 
as seen in the first item. Therefore, the tropical eigenvalues
of~$A$ coincides with the tropical roots of the polynomial 
function~\eqref{dual-puiseux}.
\end{proof}

\begin{rem}
In  general, it is known that
the sequence of valuations of the eigenvalues of a matrix~$\puiseuxA$
with entries in~$\K$ is weakly majorized by the
sequence of tropical eigenvalues of the matrix obtained
by applying the valuation to every entry of~$\puiseuxA$,
see Theorem~4.4 of~\cite{MPA16}. The previous result shows that
 equality holds when~$\puiseuxA\in\TP$. 
\end{rem}
\begin{rem}\label{rem-eigen}
We saw that the eigenvalues of a tropical totally nonnegative matrix
have a simple characterization. We may also consider
eigenvectors, i.e., solutions $u$ of $Au=\lambda u$
for some scalar $\lambda$. Such eigenvectors can be determined,
by exploiting the general combinatorial characterization
of tropical eigenvectors, in terms of shortest paths matrix
 (see for instance~\cite[\S~4.2]{butkovicbook}).
It may be interesting to investigate whether eigenvectors
show more structure in the tropical totally positive case.
\end{rem}

\section{Other aspects of tropical total positivity}\label{Other aspects}

\subsection{Tropical totally positive matrices and the tropical totally positive Grassmannian}\label{subsec-other}
We next relate the set of totally positive matrices with the totally
positive tropical Grassmannian considered by Postnikov, Speyer and Williams~\cite{POST,SW05}.

To any $n\times m$ matrix $\puiseuxA$ with entries in a field, we associate a vector of \textit{Pl\"ucker coordinates},
whose entries are the maximal minors.
We denote by~$\Delta(\puiseuxA)$ the vector of
Pl\"ucker coordinates of this matrix, assuming
without loss of generality that $n\leq m$.
So, $\Delta(\puiseuxA)$ is a vector of size $C_m^n$ whose entries are indexed
by the subsets $I\subset[m]$ such that $|I|=n$, ordered
lexicographically. We denote  the
~${I}$-coordinate of~$\Delta(\puiseuxA)$ by~$\Delta_I(\puiseuxA)
:= \det \puiseuxA_I$, where $\puiseuxA_I$ is the maximal submatrix
of $\puiseuxA$ of column set $I$. 
If $A$ is a $n\times m$ tropical matrix, the tropical Pl\"ucker coordinates
are defined analogously. We still denote them by $\Delta(A)$. In particular, we have $\Delta_I(A)=\per(A_I)$, where now $\per$ denotes the tropical permanent of the
maximal submatrix $A_I$. 

The  \textit{Grassmannian}~$\Gr_{k,n}$ on $\K$ is the space of~$k$-dimensional
subspaces of~$\K^n$. An element of~$\Gr_{k,n}$ can be represented by
an~$k\times n$ matrix of  full-rank, modulo  left multiplication by~$\GL_k$.
The map~$\puiseuxA \mapsto \Delta(\puiseuxA)$ yields
an \textit{embedding}~$\Gr_{k,n} \hookrightarrow 
\mathbb{P}^{\left(\substack{n\\k}\right)-1}(\K)$.
The \textit{totally  positive Grassmannian}~$\Gr_{k,n}^+\subset \Gr_{k,n}$,
studied in~\cite{POST,SW05},
is the
subset of~$k$-dimensional
subspaces that can be represented by  matrices~$\puiseuxA\in\K^{k\times n}$
with~$\Delta_I(\puiseuxA)>0$ for all subsets $I$ of $[n]$ with $k$ elements.
The \textit{totally nonnegative Grassmannian}~$\Gr_{k,n}^{\geq 0}\subset \Gr_{k,n}$ is defined in a similar way, requiring this time that 
$\Delta_I(\puiseuxA)\geq 0$ for all subsets $I$ of $[n]$ with $k$ elements.

There is a known correspondence between the matrices of 
$\puiseuxB\in\TP^{k\times(n-k)}$ and the elements of the 
totally positive Grassmannian $\Gr_{k,n}^+$. To see this, we first
associate with $\puiseuxB$ the matrix 
\begin{equation}\label{const}
\imath(\puiseuxB):=\left(\begin{array}{ccccccc}
1&\dots&0&0&         (-1)^{k-1}b_{k,1}&\dots&(-1)^{k-1}b_{k,n-k} \\
  \vdots & \ddots & \vdots&    \vdots&  \vdots & \ddots & \vdots\\
0&\dots&1&0  &       -b_{2,1} & \dots &- b_{2,n-k}\\
 0&\dots&0&1&        b_{1,1} & \dots & b_{1,n-k}
\end{array}\right).\end{equation}
In other words, 
$\imath(\puiseuxB)=(\mathcal{I}\ |\ \tilde{\puiseuxB})\in\K^{k\times n}\  $, 
where $\mathcal{I}$ is the $k\times k$ identity matrix, 
and the matrix $\tilde{\puiseuxB}$ is defined by
$\tilde{\puiseuxB}_t=(-1)^{k-t} \puiseuxB_{k-t+1}$, for all $t\in[k]$.

One can check that 
\begin{equation}\label{corres}
\det 
(\puiseuxB_{I,J})=\Delta_{([k]\setminus \tilde{I})\cup\tilde{J}}(\imath(\puiseuxB)),\end{equation} 
where $\puiseuxB_{I,J}$ denotes the $I\times I$ submatrix of $\puiseuxB$, 
$\tilde{I}=\{k-t+1:\ t\in I\}$, and $\tilde{J}=\{t+k:\ t\in J\}$. 
We consider the following map
\[ \stiefel: \puiseuxB\mapsto \plucker(\imath(\puiseuxB)) \enspace .
\]
Postnikov made the following observation in~\cite{POST},
up to a trivial modification (the entries of the matrix
$b$ are listed from bottom to top in~\eqref{const}).
\begin{pro}[See Prop.~3.10 of~\cite{POST}]\label{post}
The map $\stiefel$ sends bijectively $\TP^{k\times (n-k)}$
to the totally positive Grassmannian 
$\Gr_{k,n}^+$.
\end{pro}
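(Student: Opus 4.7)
The plan is to establish the three ingredients separately: that $\stiefel$ maps $\TP^{k\times(n-k)}$ into $\Gr_{k,n}^+$, that it is injective, and that it is surjective, all relying on the correspondence formula~\eqref{corres}. A preliminary combinatorial observation is that every $L\subset[n]$ with $|L|=k$ is uniquely expressible as $([k]\setminus \tilde I)\cup \tilde J$, where $\tilde J=L\setminus[k]$ and $\tilde I=[k]\setminus(L\cap[k])$, and these in turn correspond to unique subsets $I\subset[k]$, $J\subset[n-k]$ of the same size via the bijections $\tilde I=\{k-t+1:t\in I\}$ and $\tilde J=\{t+k:t\in J\}$.

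For well-definedness, take $\puiseuxB\in\TP^{k\times(n-k)}$. Given $L$ of size $k$, decompose it as above and apply~\eqref{corres} to obtain $\Delta_L(\imath(\puiseuxB))=\det(\puiseuxB_{I,J})>0$, since every minor of $\puiseuxB$ is positive. Hence $\stiefel(\puiseuxB)\in\Gr_{k,n}^+$. For injectivity, suppose $\stiefel(\puiseuxB_1)=\stiefel(\puiseuxB_2)$ as projective points. The first $k$ columns of $\imath(\puiseuxB_i)$ form the identity, so $\Delta_{[k]}(\imath(\puiseuxB_i))=1$ for $i=1,2$; the projective equality therefore lifts to an equality of Plücker vectors. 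Applying~\eqref{corres} to $1\times 1$ minors (i.e., single entries of $\puiseuxB_1$ and $\puiseuxB_2$) yields $\puiseuxB_1=\puiseuxB_2$.

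For surjectivity, let $V\in\Gr_{k,n}^+$ be represented by $M\in\K^{k\times n}$. Since $\Delta_{[k]}(M)>0$, the leftmost $k\times k$ block of $M$ is invertible, so left-multiplying by its inverse yields a representative of the form $(\mathcal I\mid N)$ with $N\in\K^{k\times(n-k)}$. Define $\puiseuxB$ by reversing the rows of $N$ and inserting the signs prescribed by~\eqref{const}, i.e., so that $\tilde{\puiseuxB}=N$ and $\imath(\puiseuxB)=(\mathcal I\mid N)$. Identity~\eqref{corres}, applied in reverse, translates positivity of all Plücker coordinates of $M$ into positivity of all minors of $\puiseuxB$, showing $\puiseuxB\in\TP^{k\times(n-k)}$ and $\stiefel(\puiseuxB)=V$.

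The only nontrivial step is the identity~\eqref{corres} itself, which is the engine of all three arguments; the main obstacle is therefore a careful sign check. I would verify it by Laplace-expanding $\Delta_L(\imath(\puiseuxB))$ along the $k-|I|$ columns of $\imath(\puiseuxB)$ coming from the identity part (indexed by $[k]\setminus\tilde I$), reducing it to $\pm\det(\tilde{\puiseuxB}_{\tilde I,J})$ with an explicit sign determined by the positions of $\tilde I$. The alternating signs $(-1)^{k-t}$ built into $\tilde{\puiseuxB}$ and the row-reversal relating $\tilde I$ to $I$ are designed precisely to cancel both this Laplace sign and the sign of the permutation reversing the rows, producing $+\det(\puiseuxB_{I,J})$. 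Once~\eqref{corres} is established, the three items above give the bijection.
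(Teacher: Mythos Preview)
Your argument is correct and is essentially the standard one. Note, however, that the present paper does not give its own proof of this proposition: it is quoted from Postnikov~\cite{POST}, and the key identity~\eqref{corres} is simply asserted (``One can check that\ldots''). Your three-step structure (image in $\Gr_{k,n}^+$, injectivity via the normalization $\Delta_{[k]}=1$, surjectivity via row-reduction to $(\mathcal{I}\mid N)$) together with the Laplace-expansion verification of~\eqref{corres} is exactly the natural route, and matches what one finds in~\cite{POST}.
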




We next compare the set of tropical totally positive matrices with
the \textit{tropical totally positive Grassmannian}~$\Trop^+(\Gr):=\val(\Gr_{n,k}^+)$, studied in~\cite{SW05}. Here, $\Trop^+(\Gr)$ is
thought of as the subset of the tropical projective space 
$\mathbb{P}^{C_n^k-1}(\rmax)$, 
obtained as the image by the valuation of $\Gr_{n,k}^+$, the latter being
identified to its image by the Pl\"ucker embedding.

We now associate 
to a matrix $B\in \R^{k\times (n-k)}$ the matrix
\begin{align}\label{const2}
\imath(B) :=
\left(\begin{array}{ccccccc} 0&\dots&\minusinfty&\minusinfty&         b_{k,1}&\dots&b_{k,n-k} \\
  \vdots & \ddots & \vdots&    \vdots&  \vdots & \ddots & \vdots\\
\minusinfty&\dots&0&\minusinfty  &       b_{2,1} & \dots & b_{2,n-k}\\
 \minusinfty&\dots&\minusinfty&0&        b_{1,1} & \dots & b_{1,n-k}
\end{array}\right)
\end{align}
and define the map
$\stiefel: \R^{k\times(n-k)}\to \R^{C_n^k}$,  such that $\stiefel(B)$ is the vector with entries
\[
\stiefel_I(B) = \per (\imath(B))_I
\]
for any subset $I$ of $[n]$ with $k$ elements. 
The next result shows that in the tropical setting, the map $\stiefel$ is still injective. We shall see in \Cref{tnnegr} that it is no longer surjective.

\begin{pro} \label{tropcorres} \ 
The map $\stiefel$
sends linearly $(\TN^\trop(\R))^{k\times(n-k)}$ to a closed
polyhedral subset of $\Trop^+(\Gr_{n,k})$. 



\end{pro}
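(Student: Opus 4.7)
The plan is to establish three things in sequence: (i) that $\stiefel$, restricted to the Monge cone $(\TN^\trop(\R))^{k\times(n-k)}$, is a \emph{classical} (not merely tropical) linear map; (ii) that $\stiefel(B)\in\Trop^+(\Gr_{n,k})$ for every such $B$, via a lifting argument using Proposition~\ref{post}; and (iii) that the image is closed and polyhedral by transferring these properties from the domain through the linear map.

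For (i), fix $L\subset[n]$ with $|L|=k$, and decompose $L=L_1\cup L_2$ with $L_1=L\cap[k]$ and $L_2=L\setminus[k]$. In the block form~\eqref{const2} of $\imath(B)$, each column $j\in L_1$ has $0$ in row $j$ and $\minusinfty$ elsewhere, forcing every non-$\zero$-weight permutation to match row $j$ to column $j$. The remaining matching pairs rows in $[k]\setminus L_1$ with columns in $L_2$; after tracking the row-reversal built into the definition of $\imath$, this is precisely the tropical permanent of the rectangular submatrix $B_{I,J}$ (with $|I|=|J|$) encoded by~\eqref{corres}. Since $B\in\TN^\trop_2(\R)$ by \Cref{trop2}, an iterated Monge swap (as in the proof of \Cref{trop2}) shows that among all bijections $I\to J$ the order-preserving one $i_s\mapsto j_s$ attains the tropical permanent. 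Hence
\[
\stiefel_L(B)=\per B_{I,J}=\sum_s B_{i_s,j_s},
\]
which is a classical linear functional of the entries of $B$; since this holds coordinatewise, $\stiefel$ is a classical linear map on the Monge cone.

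For (ii), given $B\in(\TN^\trop(\R))^{k\times(n-k)}$, apply Theorem~A (\Cref{thA}) to obtain a lift $\puiseuxB\in\TP(\K)$ with $\val\puiseuxB=B$. By Proposition~\ref{post}, $\imath(\puiseuxB)\in\Gr_{k,n}^+$, so its vector of Pl\"ucker coordinates lies in the positive Grassmannian and $\val\Delta(\imath(\puiseuxB))\in\Trop^+(\Gr_{n,k})$. It remains to verify $\val\Delta_L(\imath(\puiseuxB))=\stiefel_L(B)$ for every $L$. By~\eqref{corres}, $|\Delta_L(\imath(\puiseuxB))|=\det\puiseuxB_{I,J}$, which is positive since $\puiseuxB\in\TP$; moreover, the same Monge-swap reasoning implies that the order-preserving bijection dominates the Laplace expansion of $\det\puiseuxB_{I,J}$ in valuation, and it carries the sign of the identity permutation. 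Consequently $\val\det\puiseuxB_{I,J}=\per B_{I,J}$, which equals $\stiefel_L(B)$ by step~(i).

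For (iii), by Corollary~\ref{prop-new} the domain $(\TN^\trop(\R))^{k\times(n-k)}$ is a closed polyhedron in Euclidean space, and the image of a closed polyhedron under a classical linear map is again a closed polyhedron (by Fourier--Motzkin elimination). Combined with the containment established in~(ii), this exhibits $\stiefel((\TN^\trop(\R))^{k\times(n-k)})$ as a closed polyhedral subset of $\Trop^+(\Gr_{n,k})$. The main obstacle I anticipate is the combinatorial bookkeeping in step~(i): one must confirm that the identity matches on $L_1$ exhaust the supports of non-$\zero$ permutations of $\per(\imath(B))_L$, that the row-reversal in $\imath$ correctly transports the Monge property of $B$ to $B_{I,J}$ so that the order-preserving bijection attains the permanent, and finally, in step~(ii), that the corresponding permutation in the classical determinant has positive sign and valuation-dominates the remaining terms uniformly across all $L$.
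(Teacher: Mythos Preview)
Your outline is sound and close to the paper's own argument, but step~(ii) contains a genuine gap. You lift $B\in\TN^\trop(\R)$ to an \emph{arbitrary} $\puiseuxB\in\TP(\K)$ via Theorem~A and then assert that ``the same Monge-swap reasoning implies that the order-preserving bijection dominates the Laplace expansion of $\det\puiseuxB_{I,J}$ in valuation,'' concluding $\val\det\puiseuxB_{I,J}=\per B_{I,J}$. This inference does not hold for a generic $\TP$ lift. The Monge swaps on $B$ only give you \emph{weak} valuation inequalities, and when several permutations tie at the top valuation, their signed contributions in $\det\puiseuxB_{I,J}$ can cancel at leading order. Concretely, $B=\left(\begin{smallmatrix}0&0\\0&0\end{smallmatrix}\right)\in\TN^\trop(\R)$ admits the lift $\puiseuxB=\left(\begin{smallmatrix}1+t^{-1}&1\\1&1\end{smallmatrix}\right)\in\TP(\K)$, for which $\val\det\puiseuxB=-1<0=\per B$. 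This is precisely the phenomenon the paper flags in \Cref{lem-old}.

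The fix, which is what the paper does, is to use the \emph{specific} lift constructed in \Cref{eqrltn}: take $\puiseuxB:=\puiseuxC*t^{B}$ with $\puiseuxC\in\TN_{2,(k-1)^2}(\K)$ having $\val\puiseuxC_{ij}\equiv 0$. That lift satisfies~\eqref{e-identnew}, i.e.\ $\val\det\puiseuxB_{I,J}=\per B_{I,J}$ for \emph{all} $I,J$, which is exactly the commutation you need between $\val\circ\Delta$ and tropical $\Delta$. With this adjustment, your steps~(i) and~(iii) go through unchanged (indeed, your derivation of linearity in~(i) is the content of~\eqref{corres2} together with \Cref{DD}), and the argument matches the paper's.
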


\begin{proof}  
The construction of \Cref{eqrltn} implies that any matrix $B\in \TN^{k\times (n-k)}(\R)$ has a lift
$\puiseuxB\in\TP$, meaning that $B=\val (\puiseuxB)$. 
Moreover, by~\eqref{e-identnew}, this lift can be chosen such that
\begin{align} \per B_{I,J} = \val \det \puiseuxB_{I,J}
\label{e-liftcommute}
\end{align}
for all subsets $I\subset [k],J\subset [n-k]$ that have the same
cardinality.
Then, by \Cref{post}, $\stiefel(\puiseuxB)\in \Gr^+_{k,n}$.
Note also that
\begin{equation}\label{corres2}
\per
({B}_{I,J})=\Delta_{([k]\setminus \tilde{I})\cup\tilde{J}}({\imath(B)}).\end{equation} 
We deduce from~\eqref{e-liftcommute} and~\eqref{corres2} that
$\stiefel (B) = \val \stiefel(\imath(\puiseuxB))\in \Trop^+\Gr_{k,n}$. 
Moreover, if $B,B '$ are distinct elements of $\TN^{k\times (n-k)}(\R)$, then,
it follows from~\eqref{corres2} that $\stiefel(B)$ and $\stiefel(B')$ have at least one distinct coordinate. 
Observe that $\stiefel(B)$ and $\stiefel(B')$ have also one
identical coordinate, namely, the one corresponding to the choice $I=J=\varnothing$ in~\eqref{corres2}. It follows that the vectors $\stiefel(B)$ and $\stiefel(B')$ are non proportional in the tropical sense, and so, they define distinct
elements of $\Trop^+(\Gr_{k,n})$. We already observed that for a
square matrix $C$ in $\TN(\R)$, the maximum in evaluating $\per C$ is always
achieved on the diagonal. It follows that the map $\stiefel$ {\em restricted}
to $\TN(\R)$ is linear (it is only piecewise linear in $\R^{k\times (n-k)}$).
Hence, this map sends the set $\TN(\R)$, which is polyhedral,
to a closed polyhedron contained in $\Trop^+(\Gr_{k,n})$.
\end{proof}
\begin{exa}\label{illustrative}
Consider the following instance of the construction~\eqref{const2}:
\[ B = 
\left(\begin{array}{cc}
  0&-2\\
0&-1\\
0&0 
\end{array}\right)\in \TP^\trop,
\qquad 
A:=\imath(B)=\left(\begin{array}{ccccc}
0&\minusinfty&\minusinfty&  0&0\\
\minusinfty&0&\minusinfty&0&-1\\
\minusinfty&\minusinfty&0&0&{-2} 
\end{array}\right)\enspace,
\]
The image of $B$ by the 
map $\stiefel$ is the point
of the tropical projective space with coordinates
\[
\Delta(A)=(0: 0: -2: 0: -1: -1 : 0 : 0 : 0 : 0 )
\]
\Cref{tropcorres} shows that this point belongs
to $\Trop^+\Gr_{3,5}$. This can be checked by elementary means
as follows. Since $B\in \TP^\trop$, we do not need to consider the special
lift of \Cref{eqrltn}, any lift of $B$, and in particular
the trivial lift $\puiseuxB=(t^{B_{ij}})$, belongs to $\TP(\K)$. Then, the matrix
$\puiseuxB$ is sent to the following matrix by the construction~\eqref{const}
$$
\imath(\puiseuxB)=\left(\begin{array}{ccccc}
1&0&0& \  1&1\\
0&1&0&-1&-t^{-1}\\
0&0&1&\ 1& t^{-2}
\end{array}\right) \enspace .
$$
One can check that
\[
\Delta(\imath(\puiseuxB))= (1: 1: t^{-2}: 1: t^{-1}: t^{-1}-t^{-2}: 1: 1: 1-t^{-2}: 1-t^{-1})\in \Gr^+_{3,5}
\]
and so, $\val \Delta(\imath(\puiseuxB))=\Delta(A)\in \Trop^+\Gr_{3,5}$.

\end{exa}

\begin{exa}
\label{tnnegr} 
We next give an elementary example showing that $\stiefel((\TN^\trop)^{2\times 2})$ is a strict subset of $\Trop^+\Gr_{2,4}$. Consider the matrix 
$$D=\left(\begin{array}{cccc}0&-1&-2&-3\\
0&0&0&0
\end{array}\right),$$
and the trivial lift $\mathbf{D}:=(t^{D_{ij}})$. It can be checked
that
\[
\Delta(\mathbf{D})=(1-t^{-1}:1-t^{-2}:1-t^{-3}:t^{-1}-t^{-2}:t^{-1}-t^{-3}:t^{-2}-t^{-3})\in \Gr^+_{2,4}
\]
and so, $\val(\Delta(\mathbf{D}))=\Delta(D)=(0:0:0:-1:-1:-2)\in \Trop^+\Gr_{2,4}$.
However, this element of $\Trop^+\Gr_{2,4}$ does not belong to the 
image of $\TN^\trop(\R)$ by the map $\phi$. Indeed, assume by contradiction
that $\Delta(D)=\stiefel(C)$ for some matrix 
$C$ of size $2\times 2$. 
Using \eqref{corres2}, we get that 
$0=\Delta_{\{1,3\}}(D)=C_{1,1}$, 
$0=\Delta_{\{1,4\}}(D) =  C_{1,2}$, 
$-1=\Delta_{\{2,3\}}(D) =  C_{2,1}$, 
$-1=\Delta_{\{2,4\}}(D)= C_{22}$,
and so
\[ C=\left(\begin{array}{cc}
0&0\\
-1&-1 
\end{array}\right) \enspace.
\]
However, using again~\eqref{corres2}, we see
that $-2=\Delta_{\{3,4\}}(D)=\per C_{\{1,2\},\{1,2\}}=-1$,
a contradiction.
\end{exa}

\begin{rem}\label{rk-fink-rincon}
The question of characterizing the image of the map $\stiefel$
has already been studied without considerations
of total positivity. Fink and Rinc\'on 
called ``tropical Stiefel map''~\cite{rinconfink}
the map $A\mapsto \Delta(A)$ sending a tropical matrix
to the vector of its maximal tropical minors. 
The image $\Delta(\R^{k\times n})$ is called the Stiefel
image.  Rinc\'on and Fink observed that
\begin{align}
\phi(\R^{k\times (n-k)})\subset \Delta(\R^{k\times n})
\subset \val (\Gr_{n,k}(\K)) \enspace .
\label{e-compare}
\end{align}
They showed that the second inclusion
is strict for $(k,n)=(2,6)$.  
This also follows from a result 
of Herrmann, Joswig and Speyer, see 
Corollary~14 and Example~10 in~\cite{joswigspeyer}.
Corollary 3.8 of~\cite{rinconfink} also allows
one to represent $\Delta(\R^{k\times n})$ as the
union of the images of a family of maps, including
the map $\phi$ as a special case. Hence, this result
implies that the first inclusion in~\eqref{e-compare}
is also strict. 
We leave it for further investigation to look
for refinements of these results in the case
of the tropical totally positive Grassmannian.
\end{rem}
\begin{rem}\label{rem-positroid}
A related issue is to investigate which part of the
totally {\em nonnegative} tropical Grassmanian
is given by the image $\phi(\TN^{\trop})$, i.e.,
by Pl\"ucker coordinates of the form $\phi(A)$ where now the matrix $A\in \TN^\trop$ (a tropical totally nonnegative matrix with possibly $-\infty$ entries). 
It may be interesting in particular to investigate relations
with Postnikov's positive Grassman cells.
Recall that Postnikov introduced in~\cite[Section~3]{POST} a decomposition of the totally nonnegative Grassmannian in terms of ``positive Grassman cells''. 
Such a cell is associated to a matroid $\mathcal{M}$; it consists
of those elements of $\Gr^+_{k,n}$ represented by a matrix $\puiseuxA$,
such that $\Delta_I(\puiseuxA)$ is nonzero if and only $I$ is a basis
of the matroid $\mathcal{M}$. The special matroids
corresponding to nonempty cells are called positroids. 
\end{rem}


\begin{rem}
Suppose~$A\in(\TN^\trop)^{n\times m}$ is in double echelon form,
with $n\leq m$, and that it has
sign-nonsingular tropical Pl\"ucker coordinates. Then, there 
exists~$B\in(\TP^\trop)^{n\times m}$ s.t.~$A$ and~$B$ represent the 
same element in~$\Trop^+\Gr_{n,m}$.

We construct the matrix $B$ as follows. First, we set
$B_{i,j}=A_{i,j}$, for all $i\leq j\leq m-n+i$ with $i\in[n]$ (in other words,
the diagonal entries of all the maximal submatrices of $A$ are unchanged). 
Since $A\in \TN^\trop$, the identity permutation in every maximal minor of~$A$ is of maximal weight. We deduce that every~$2\times 2$ 
solid submatrix~$A_{\{i,i+1\},\{j,j+1\}}=B_{\{i,i+1\},\{j,j+1\}}$ such 
that~$i+1\leq j\leq m-n+i-1,$ must be sign-nonsingular. Indeed, the maximal submatrix in~$A$ containing a solid $2\times 2$ sign singular matrix
must be sign-singular (there must be at least two permutations of maximal
weight in the latter matrix, namely the identity, and a transposition).
Then, for every~$i\in[n]$ we construct the~$B_{i,j}\ne \zero$ successively
for~$j=i-1,\dots,2,1$, by requiring that~$B_{\{i-1,i\},\{j,j+1\}}$   
satisfies the strict Monge property, and then successively
for~$j= m-n+i+1,\dots,m-1,m$, by requiring that~$B_{\{i,i+1\},\{j-1,j\}}$   
satisfies the strict Monge property.
\end{rem}

\subsection{Tropical totally nonnegative matrices and planar networks}
\label{stpn}
The combinatorial properties of minors of the weight matrix associated with a planar
 network are well known~\cite{F&Z}.
These were studied by Karlin and McGregor back in the 50's (see~\cite{KMc}).
Some applications were given
by Gessel and Viennot in~\cite{Gessel&Viennot,Gessel&Viennot2}. 
In this context, totally nonnegative matrices arise
as weight matrices of planar networks. We next show
that, as an immediate consequence of the previous
result, the same is true in the tropical setting.
To this end, we first recall or state some basic definitions.


A  graph is called \textit{planar} if it can be drawn on a plane so that its edges have only endpoint-intersections. 
A \textit{planar network} is a weighted directed 
planar graph, with no cycles. 
Throughout, we assume a network has~$n$ sources and targets, numbered bottom
 to top, with edges  directed left to right assigned with real weights.

Let~$G$ be a planar network with weights in~$\K$.
 The \textit{weight of a  path} between nodes~$i,j\in[n]$ in~$G$ (if exists)
 is the product of  weights of its edges. 
The \textit{weight matrix}  of~$G$ is an~$n\times n$ matrix 
having the sum of weights of all paths from~$i$ to~$j$ for its ~$i, j$-entry, and~$0$ 
if such a path does not exist. 

Suppose now that the edges of $G$ are weighted by elements of $\rmax$. Then,  the \textit{tropical weight matrix} 
of~$G$ has the paths of maximal weight
 from~$i$ to~$j$ for its~$i,j$-entry, and~$\minusinfty$ 
if such a path does not exist, where the  weight of a path becomes  the \textit{sum}
 of weights of its edges. 

\ 

\


\begin{exa} 
For all $\alpha\leq 6$, the  planar network

\begin{figure}[htbp]\begin{tikzpicture}[main_node/.style={circle,fill=black,minimum 
size=0.05em,inner sep=1pt]}]
 \node[main_node] (1) at (0,0) {};
    \node[main_node] (2) at (-1,1)  {};
    \node[main_node] (3) at (2,1) {};
    \node[main_node] (4) at (1,0) {};
    \node[main_node] (5) at (-1,0) {};
    \node[main_node] (6) at (2,0) {};
 \draw[main_node]  (1) edge node{$3\ \ \ \ $} (2);   
 \draw[main_node]  (2) edge node{$\begin{array}{c}\alpha\\ {}\end{array}$} (3);  
 \draw[main_node]  (3) edge node{$\ \ \ \ 2$} (4);    
 \draw[main_node]  (5) edge node{ } (1);    
 \draw[main_node]  (6) edge node{ } (4);    
 \draw[main_node]  (1) edge node{$\begin{array}{c}1\\ {}\end{array}$} (4);
\end{tikzpicture}
\end{figure}

\noindent 
corresponds to the tropical weight matrix 
\[ A=\left(\begin{array}{cc}1&3\\4&6\end{array}\right)\enspace .\]
\end{exa}

\begin{cor}\label{PNTP}
The tropical weight matrix of every planar network is tropical totally nonnegative, and every tropical totally nonnegative matrix is the tropical weight
matrix of some planar network.

\end{cor}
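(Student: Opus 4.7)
The plan is to reduce both implications to the classical Lindström--Gessel--Viennot theorem over $\K$, using the identity $\TN^\trop = \val(\TN)$ proved in \Cref{coro-reverse}. The bridge between the classical and tropical worlds is the convexity of the valuation~\eqref{e-morphism}: for a sum of nonnegative elements of $\K$, the valuation of the sum equals the maximum of the valuations of the summands. Applied to the sum of path weights defining a single entry of a weight matrix, this immediately turns the classical construction into the tropical one.

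For the first (forward) direction, given a tropical planar network $G$ with edge weights $w_e\in\rmax$, I would consider the classical lift $\puiseuxG$ in which each edge $e$ receives weight $t^{w_e}\in\Knneg$ (with $t^{-\infty}:=0$). The classical Lindström--Gessel--Viennot lemma, applied over the ordered field $\K$, shows that the classical weight matrix $\puiseuxA$ of $\puiseuxG$ lies in $\TN(\K)$: each $k\times k$ minor expands as a sum over families of non-intersecting paths, hence is nonnegative. Then, using~\eqref{e-morphism} on each entry,
\[
\val(\puiseuxA_{i,j}) = \val\Big(\sum_{p\colon i\to j}\prod_{e\in p}t^{w_e}\Big)
= \max_{p\colon i\to j}\sum_{e\in p}w_e = A_{i,j}\enspace,
\]
so $A=\val(\puiseuxA)\in\val(\TN)=\TN^\trop$ by \Cref{coro-reverse}.

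For the converse direction, start with $A\in\TN^\trop$. By \Cref{coro-reverse} (which itself uses the Hadamard lift of \Cref{eqrltn}), there exists $\puiseuxA\in\TN(\K)$ with $\val(\puiseuxA)=A$. The classical theorem, which states that every totally nonnegative matrix over a real closed field is the weight matrix of some planar network with nonnegative weights, then provides a planar network $\puiseuxG$ with edge weights $\puiseuxw_e\in\Knneg$ realising $\puiseuxA$. Replacing each edge weight $\puiseuxw_e$ by its valuation $\val(\puiseuxw_e)\in\rmax$ (with the convention $\val(0)=\minusinfty$) yields a tropical planar network $G$, and the same computation as above, invoking~\eqref{e-morphism} to commute $\val$ with the sum over paths, shows that the tropical weight matrix of $G$ equals $\val(\puiseuxA)=A$.

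The main obstacle is to justify the converse of Lindström--Gessel--Viennot over $\K$ rather than just over $\R$. This is not problematic: the classical construction (via a Loewner--Whitney style factorization of $\puiseuxA$ into elementary bidiagonal factors, each of which is the weight matrix of an elementary ``chip'') goes through over any real closed field; alternatively, the statement ``every totally nonnegative $n\times n$ matrix is the weight matrix of a planar network with at most $N(n)$ edges of prescribed combinatorial shape'' can be phrased as a first-order sentence in the language of ordered fields for each fixed $n$, and transfers from $\R$ to $\K$ by Tarski's completeness theorem (\Cref{subsec-nonarch}). In the square invertible case, one can even short-circuit this entirely: \Cref{rltn} already factors $A\in\val(\GL_n\cap\TN)$ as a tropical product of elementary Jacobi matrices, each of which is trivially the tropical weight matrix of an elementary planar chip, and their concatenation realises $A$.
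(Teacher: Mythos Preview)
Your forward direction is exactly the paper's argument, made explicit: lift the tropical network to $\K$ via $t^{w_e}$, invoke Lindstr\"om--Gessel--Viennot to get $\puiseuxA\in\TN(\K)$, and use convexity of the valuation together with \Cref{coro-reverse}.

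For the converse, your route genuinely differs from the paper's. The paper stays in the tropical world: it invokes \Cref{rltn} to factor a (finite) $A\in\TN^\trop$ as a product of tropical Jacobi matrices, and then observes that the standard ``chip'' interpretation of a Jacobi factorization (cf.\ the discussion before Theorem~13 of~\cite{F&Z}) carries over verbatim to the tropical semiring, yielding the desired planar network directly. You instead lift $A$ to $\puiseuxA\in\TN(\K)$ via \Cref{coro-reverse}, invoke the classical realization of $\puiseuxA$ as a weight matrix over $\K$, and then tropicalize the edge weights. Your argument is correct; the Tarski transfer you outline (fixing the combinatorial shape of the canonical network for each $n$) is a legitimate way to move the realization theorem from $\R$ to $\K$. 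What each buys: the paper's approach is more self-contained and makes the planar network explicit from the tropical Jacobi factors, but as written it only treats matrices with $\per(A)\neq\zero$; your lift-and-descend argument handles all of $\TN^\trop$ uniformly, at the cost of importing the full classical realization theorem over a nonarchimedean field. You already note the paper's shortcut in your final paragraph, so you have in fact identified both proofs.
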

\begin{proof}
The weight matrix $\bm A$ of a planar network whose edges $e$ are weighted by nonnegative elements $\bm w_e \in \K$ is totally nonnegative
(see e.g.~ \cite[Coro 2]{F&Z}).
The valuation sends this weight matrix
to the tropical weight matrix $A$ arising by weighting the edge $e$ with $\val\bm w_e$. By Theorem~\ref{coro-reverse}, $A=\val \bm A$ is totally nonnegative.

Conversely, \Cref{rltn} shows that a finite tropical TN matrix $A$ can be
factored as a product of tropical Jacobi matrices. A classical result
allows one to identify a product of Jacobi matrices to the weight matrix
of a planar network, see the discussion before Theorem~13 of~\cite{F&Z}.
The same arguments works in the tropical setting, and we conclude
that $A$ is the tropical weight matrix of a planar network.

\end{proof}







\bibliographystyle{alpha}

\bibliography{tropical}

\end{document}